\newcommand{\logLogSlopeTriangle}[5]
{
    \pgfplotsextra
    {
        \pgfkeysgetvalue{/pgfplots/xmin}{\xmin}
        \pgfkeysgetvalue{/pgfplots/xmax}{\xmax}
        \pgfkeysgetvalue{/pgfplots/ymin}{\ymin}
        \pgfkeysgetvalue{/pgfplots/ymax}{\ymax}

        \pgfmathsetmacro{\xArel}{#1}
        \pgfmathsetmacro{\yArel}{#3}
        \pgfmathsetmacro{\xBrel}{#1-#2}
        \pgfmathsetmacro{\yBrel}{\yArel}
        \pgfmathsetmacro{\xCrel}{\xArel}

        \pgfmathsetmacro{\lnxB}{\xmin*(1-(#1-#2))+\xmax*(#1-#2)} 
        \pgfmathsetmacro{\lnxA}{\xmin*(1-#1)+\xmax*#1} 
        \pgfmathsetmacro{\lnyA}{\ymin*(1-#3)+\ymax*#3} 
        \pgfmathsetmacro{\lnyC}{\lnyA+#4*(\lnxA-\lnxB)}
        \pgfmathsetmacro{\yCrel}{\lnyC-\ymin)/(\ymax-\ymin)}

        \coordinate (A) at (rel axis cs:\xArel,\yArel);
        \coordinate (B) at (rel axis cs:\xBrel,\yBrel);
        \coordinate (C) at (rel axis cs:\xCrel,\yCrel);

        \draw[#5]   (A)-- node[pos=0.5,anchor=north] {\scriptsize{1}}
                    (B)-- 
                    (C)-- node[pos=0.,anchor=west] {\scriptsize{#4}} 
                    cycle;
    }
}
\newcommand{\email}[1]{\href{mailto:#1}{#1}}
\newtheorem{theorem}{Theorem}
\newtheorem{proposition}[theorem]{Proposition}
\newtheorem{lemma}[theorem]{Lemma}
\newtheorem{corollary}[theorem]{Corollary}
\theoremstyle{remark}
\newtheorem{remark}[theorem]{Remark}
\theoremstyle{definition}
\newtheorem{assumption}[theorem]{Assumption}
\newcommand{\st}{\,:\,}
\newcommand{\Real}{\mathbb{R}}
\newcommand{\Natural}{\mathbb{N}}
\newcommand{\Mh}[1][h]{\mathcal{M}_{#1}}
\newcommand{\Th}[1][h]{\mathcal{T}_{#1}}
\newcommand{\Fh}[1][h]{\mathcal{F}_{#1}}
\newcommand{\Fhb}{\Fh^{{\rm b}}}
\newcommand{\Eh}[1][h]{\mathcal{E}_{#1}}
\newcommand{\Vh}{\mathcal{V}_h}
\DeclareRobustCommand{\bvec}[1]{\boldsymbol{#1}}
  \renewcommand{\bvec}[1]{#1}%
\newcommand{\uvec}[1]{\underline{\bvec{#1}}}
\newcommand{\cvec}[1]{\bvec{\mathcal{#1}}}
\newcommand{\rotation}[1]{\varrho_{#1}}
\DeclareMathOperator{\GRAD}{\bf grad}
\DeclareMathOperator{\CURL}{\bf curl}
\DeclareMathOperator{\DIV}{div}
\DeclareMathOperator{\ROT}{rot}
\DeclareMathOperator{\VROT}{\bf rot}
\newcommand{\Hcurl}[1]{\bvec{\mathrm{H}}(\CURL;#1)}
\newcommand{\Hdiv}[1]{\bvec{\mathrm{H}}(\DIV;#1)}
\newcommand{\Leb}{\mathrm{L}}
\newcommand{\Hil}{\mathrm{H}}
\newcommand{\Xgrad}[1][T]{\underline{\mathrm{X}}_{\GRAD,#1}^k}
\newcommand{\Xcurl}[1][T]{\underline{\bvec{\mathrm{X}}}_{\CURL,#1}^k}
\newcommand{\Xcurlz}[1][T]{\underline{\mathcal{\mathrm{E}}}_{#1}}
\newcommand{\Xdiv}[1][T]{\underline{\bvec{\mathrm{X}}}_{\DIV,#1}^k}
\newcommand{\Xdivz}[1][T]{\underline{\mathcal{\mathrm{F}}}_{#1}}
\newcommand{\ext}{\uvec{\mathfrak{E}}_h}
\newcommand{\uIgrad}[1][T]{\underline{I}_{\GRAD,#1}^k}
\newcommand{\uIdiv}[1][T]{\uvec{I}_{\DIV,#1}^{k}}
\newcommand{\lproj}[2]{\pi_{\mathcal{P},#2}^{#1}}
\newcommand{\vlproj}[2]{\boldsymbol{\pi}_{\cvec{P},#2}^{#1}}
\newcommand{\Rproj}[2]{\bvec{\pi}_{\cvec{R},#2}^{#1}}
\newcommand{\ROproj}[2]{\bvec{\pi}_{\cvec{R},#2}^{\perp,#1}}
\newcommand{\Gproj}[2]{\bvec{\pi}_{\cvec{G},#2}^{#1}}
\newcommand{\GOproj}[2]{\bvec{\pi}_{\cvec{G},#2}^{\perp,#1}}
\newcommand{\uGh}[1][k]{\uvec{G}_h^{#1}}
\newcommand{\GE}{G_E^k}
\newcommand{\GF}{\accentset{\bullet}{\bvec{G}}_F^k}
\newcommand{\GT}{\accentset{\bullet}{\bvec{G}}_T^k}
\newcommand{\CT}{\accentset{\bullet}{\bvec{C}}_T^k}
\newcommand{\Ch}{\bvec{C}_h^k}
\newcommand{\CF}{C_F^k}
\newcommand{\DT}[1][T]{D_{#1}^k}
\newcommand{\Dh}{D_h^k}
\newcommand{\uCT}[1][T]{\uvec{C}_{#1}^k}
\newcommand{\uCh}{\uvec{C}_h^k}
\DeclareMathOperator{\CURLz}{\mathsf{CURL}}
\newcommand{\trE}{\gamma_E^{k+1}}
\newcommand{\trF}{\gamma_F^{k+1}}
\newcommand{\trFtilde}{\tilde{\gamma}_F^{k+1}}
\newcommand{\trFt}{\bvec{\gamma}_{{\rm t},F}^k}
\newcommand{\PcurlT}{\bvec{P}_{\CURL,T}^k}
\newcommand{\hPcurlT}{\hat{\bvec{P}}_{\CURL,T}^k}
\newcommand{\PdivT}{\bvec{P}_{\DIV,T}^k}
\newcommand{\Pdivh}{\bvec{P}_{\DIV,h}^k}
\newcommand{\FT}{\mathcal{F}_T}
\newcommand{\ET}{\mathcal{E}_T}
\newcommand{\EF}{\mathcal{E}_F}
\newcommand{\FE}{\mathcal{F}_E}
\newcommand{\VT}{\mathcal{V}_T}
\newcommand{\VF}{\mathcal{V}_F}
\newcommand{\VE}{\mathcal{V}_E}
\newcommand{\normal}{\bvec{n}}
\newcommand{\tangent}{\bvec{t}}
\newcommand{\Poly}[2][]{\mathcal{P}_{#1}^{#2}}
\newcommand{\Roly}[1]{\boldsymbol{\mathcal{R}}^{#1}}
\newcommand{\Goly}[1]{\boldsymbol{\mathcal{G}}^{#1}}
\newcommand{\norm}[2][]{\|#2\|_{#1}}
\newcommand{\vvvert}{\vert\kern-0.25ex\vert\kern-0.25ex\vert}
\newcommand{\tnorm}[2][]{\vvvert #2\vvvert_{#1}}
\DeclareMathOperator{\Ker}{Ker}
\DeclareMathOperator{\Image}{Im}
\DeclareMathOperator{\card}{card}
\newcommand{\ball}[1]{\mathcal{B}_{#1}}
\newcommand{\Id}{{\rm Id}}
\DeclareRobustCommand{\MAT}[1]{\boldsymbol{\mathsf{#1}}}
  \renewcommand{\MAT}[1]{#1}%
\DeclareRobustCommand{\VEC}[1]{\mathsf{#1}}
  \renewcommand{\VEC}[1]{#1}%
\DeclareRobustCommand{\UVEC}[1]{\underline{\mathsf{#1}}}
  \renewcommand{\UVEC}[1]{#1}%
\newcommand{\trans}{\intercal}
\newcommand{\NPT}[1]{N_{\mathcal{P},T}^{#1}}
\newcommand{\NGT}[1]{N_{\cvec{G},T}^{#1}}
\newcommand{\NGOT}[1]{N_{\cvec{G},T}^{#1,\perp}}
\newcommand{\NRT}[1]{N_{\cvec{R},T}^{#1}}
\newcommand{\NPF}[1]{N_{\mathcal{P},F}^{#1}}
\newcommand{\NDIV}{N_{\DIV,T}^k}
\newcommand{\BPoly}[2][T]{\mathfrak{P}_{#1}^{#2}}
\newcommand{\BPolyd}[2][T]{\bvec{\mathfrak{P}}_{#1}^{#2}}
\newcommand{\BGoly}[2][T]{\bvec{\mathfrak{G}}_{#1}^{#2}}
\newcommand{\BGOoly}[2][T]{\bvec{\mathfrak{G}}_{#1}^{#2,\perp}}
\newcommand{\BRoly}[2][T]{\bvec{\mathfrak{R}}_{#1}^{#2}}
\newcommand{\BROoly}[2][T]{\bvec{\mathfrak{R}}_{#1}^{#2,\perp}}
\newcommand{\BXdivT}{\bvec{\mathfrak{B}}_{\DIV,T}^k}
\begin{document}

\title{An arbitrary-order method for magnetostatics on polyhedral meshes based on a discrete de Rham sequence}
\author[1]{Daniele A. Di Pietro}
\author[2]{J\'er\^ome Droniou}
\affil[1]{IMAG, Univ Montpellier, CNRS, Montpellier, France, \email{daniele.di-pietro@umontpellier.fr}}
\affil[2]{School of Mathematics, Monash University, Melbourne, Australia, \email{jerome.droniou@monash.edu}}

\maketitle

\begin{abstract}
  In this work we develop a discretisation method for the mixed formulation of the magnetostatic problem supporting arbitrary orders and polyhedral meshes.
  The method is based on a global discrete de Rham (DDR) sequence, obtained by patching the local spaces constructed in \cite{Di-Pietro.Droniou.ea:20} by enforcing the single-valuedness of the components attached to the boundary of each element.
  The first main contribution of this paper is a proof of exactness relations for this global DDR sequence, obtained leveraging the exactness of the corresponding local sequence and a topological assembly of the mesh valid for domains that do not enclose any void.
  The second main contribution is the formulation and well-posedness analysis of the method, which includes the proof of uniform Poincar\'e inequalities for the discrete divergence and curl operators.
  The convergence rate in the natural energy norm is numerically evaluated on standard and polyhedral meshes.
  When the DDR sequence of degree $k\ge 0$ is used, the error converges as $h^{k+1}$, with $h$ denoting the mesh size.
  \medskip\\
  \textbf{Key words.} Discrete de Rham, magnetostatics, mixed methods, compatible discretisations, polyhedral methods
  \medskip\\
  \textbf{MSC2010.} 65N30, 65N99, 78M10, 78M25
\end{abstract}



\section{Introduction}

In this work we develop a discretisation method for the mixed formulation of the magnetostatic problem supporting arbitrary orders and polyhedral meshes.
The stability of the method hinges on a global version of the discrete de Rham (DDR) sequence of \cite{Di-Pietro.Droniou.ea:20}.

Let $\Omega\subset\Real^3$ be an open connected polyhedral domain that does not enclose any void (that is, its second Betti number is zero), with boundary $\partial\Omega$ and unit outward normal $\normal$.
Denote by $\Hcurl{\Omega}$ the space of vector-valued functions over $\Omega$ that are square-integrable along with their curl and by $\Hdiv{\Omega}$ the space of vector-valued functions over $\Omega$ that are square-integrable along with their divergence.
Let $\bvec{J}\in\CURL\Hcurl{\Omega}$ and $\bvec{g}\in \Leb^2(\partial\Omega)^3$ denote, respectively, the free current density and boundary datum.
We consider the following problem (see, e.g., \cite[Section 4.5.3]{Arnold:18}):
Find $(\bvec{H},\bvec{A})\in\Hcurl{\Omega}\times\Hdiv{\Omega}$ such that
\begin{equation}\label{eq:weak}
  \begin{alignedat}{2}
    a(\bvec{H},\bvec{\zeta}) - b(\bvec{\zeta},\bvec{A}) &= -\int_{\partial\Omega}\bvec{g}\cdot\bvec{\zeta}
    &\qquad& \forall\bvec{\zeta}\in\Hcurl{\Omega},
    \\ 
    b(\bvec{H},\bvec{v}) + c(\bvec{A},\bvec{v}) &= \int_\Omega\bvec{J}\cdot\bvec{v}
    &\qquad& \forall\bvec{v}\in\Hdiv{\Omega},
  \end{alignedat}
\end{equation}
with bilinear forms $a:\Hcurl{\Omega}\times\Hcurl{\Omega}\to\Real$, $b:\Hcurl{\Omega}\times\Hdiv{\Omega}\to\Real$, and $c:\Hdiv{\Omega}\times\Hdiv{\Omega}\to\Real$ such that, for all $\bvec{\upsilon},\bvec{\zeta}\in\Hcurl{\Omega}$ and all $\bvec{w},\bvec{v}\in\Hdiv{\Omega}$,
\[
a(\bvec{\upsilon},\bvec{\zeta})\coloneq\int_\Omega\mu\bvec{\upsilon}\cdot\bvec{\zeta},\qquad
b(\bvec{\zeta},\bvec{v})\coloneq\int_\Omega\bvec{v}\cdot\CURL\bvec{\zeta},\qquad
c(\bvec{w},\bvec{v})\coloneq\int_\Omega\DIV\bvec{w}\DIV\bvec{v}.
\]
The solution of problem \eqref{eq:weak} satisfies almost everywhere 
\begin{subequations}\label{eq:strong}
  \begin{alignat}{2}\label{eq:strong:magnetic.field}
    \mu\bvec{H} - \CURL\bvec{A} &= \bvec{0} &\qquad& \text{in $\Omega$},
    \\ \label{eq:strong:ampere}
    \CURL\bvec{H} &= \bvec{J} &\qquad& \text{in $\Omega$},
    \\ \label{eq:strong:coulomb.gauge}
    \DIV\bvec{A} &= 0 &\qquad& \text{in $\Omega$},
    \\ \label{eq:strong:bc}
    \bvec{A}\times\normal &= \bvec{g} &\qquad& \text{on $\partial\Omega$}.
  \end{alignat}
\end{subequations}
In the context of magnetostatics, the interpretation of the above relations is as follows:
equation \eqref{eq:strong:magnetic.field} expresses the magnetic field $\bvec{H}$ in terms of the vector potential $\bvec{A}$, with $\mu:\Omega\to\Real$ denoting the permeability such that $\overline{\mu}\ge \mu\ge\underline{\mu}>0$ almost everywhere in $\Omega$, where $\overline{\mu}$ and $\underline{\mu}$ are constant numbers;
equation \eqref{eq:strong:ampere} is Amp\`ere's law relating the magnetic field to the current density;
\eqref{eq:strong:coulomb.gauge} is the so-called Coulomb's gauge which, together with the boundary condition \eqref{eq:strong:bc}, ensures the uniqueness of the vector potential (notice that, since the second Betti number of $\Omega$ is zero, the space of $2$-harmonic forms is trivial).

The well-posedness of problem \eqref{eq:weak} hinges on the fact that, under the above assumptions on the domain, the image of the curl operator coincides with the kernel of the divergence operator, and the latter is surjective in $\Leb^2(\Omega)$; cf., e.g., the discussion in \cite[Section 2]{Di-Pietro.Droniou.ea:20}.
These relations correspond to the exactness of the rightmost portion of the de Rham sequence
\begin{equation}\label{eq:continuous.sequence}
  \begin{tikzcd}
    \Real\arrow{r}{i_\Omega}
    & \Hil^1(\Omega)\arrow{r}{\GRAD}
    & \Hcurl{\Omega}\arrow{r}{\CURL}
    & \Hdiv{\Omega}\arrow{r}{\DIV}
    & \Leb^2(\Omega)\arrow{r}{0} & \{0\},
  \end{tikzcd}
\end{equation}
where $i_\Omega$ is the operator that maps a real value to a constant function over $\Omega$ and $\Hil^1(\Omega)$ the space of scalar-valued functions over $\Omega$ that are square-integrable along with their gradient.
The design of stable numerical approximations of problem \eqref{eq:weak} requires to mimic these exactness properties at the discrete level.

  Other formulations of the magnetostatic problem are possible, most notably the $\bvec{H}$- and $\bvec{A}$-methods presented in \cite{Kanayama.Motoyama.ea:90}. The $\bvec{H}$-method consists in eliminating $\bvec{A}$ from \eqref{eq:strong} by expressing the fact that $\mu\bvec{H}\in \Hcurl{\Omega}$ (see \eqref{eq:strong:magnetic.field}) to write $\DIV (\mu\bvec{H})=0$, and by retaining \eqref{eq:strong:ampere}; the divergence constraint on $\mu\bvec{H}$ is enforced in the weak formulation by the introduction of a Lagrange multiplier $p$ (scalar potential, which appears through its gradient).
  The $\bvec{A}$-method uses \eqref{eq:strong:magnetic.field} to eliminate $\bvec{H}$ from \eqref{eq:strong:ampere}, leading to the second order $\CURL (\mu^{-1}\CURL \bvec{A})=\bvec{J}$ equation on $\bvec{A}$. Here too, the constraint $\DIV\bvec{A}=0$ is imposed in the weak formulation via a scalar potential that appears through its gradient.
  In either of these methods, the well-posedness of the weak formulation hinges on the exactness of the leftmost portion of the de Rham sequence \eqref{eq:continuous.sequence}, specifically the relation $\Ker \CURL\subset \Image \GRAD$, which holds only when the first Betti number of $\Omega$ is zero.
  The most convenient formulation to use as a starting point for the numerical approximation (magnetic field-vector potential as in \eqref{eq:weak}, or $\bvec{H}$- or $\bvec{A}$-methods as in \cite{Kanayama.Motoyama.ea:90}) thus depends on the topology of the domain: for solid toroidal or cylindrical domains (that have non-zero first Betti number but zero second Betti number), e.g., problem \eqref{eq:weak} is naturally well-posed, whereas the formulations of \cite{Kanayama.Motoyama.ea:90} require an additional condition to account for the existence of non-trivial 1-harmonic forms.
  From the standpoint of numerical approximation, this necessitates the (possibly expensive) computation of cohomology generators (see, e.g., \cite{Dotko.Specogna:13,Rodriguez.Bertolazzi.ea:13}).
  Clearly, for domains that enclose voids (non-zero second Betti number) but do not have tunnels (zero first Betti number), the situation is reversed, and an additional condition enforcing the $\Leb^2$-orthogonality of the vector potential $\bvec{A}$ to 2-harmonic forms is required to ensure the well-posedness of \eqref{eq:weak}; see, e.g., \cite[Section 4.5.4]{Arnold:18}. We note in passing that, although this paper focuses on the formulation \eqref{eq:weak}, the framework we develop provides an entire discrete De Rham sequence that could equally be used on the $\bvec{H}$- and $\bvec{A}$-methods.

In the context of Finite Element (FE) approximations, the exactness property discussed above is achieved by a nontrivial choice of finite-dimensional subspaces of $\Hcurl{\Omega}$ and $\Hdiv{\Omega}$; cf.\ \cite{Arnold:18} for a comprehensive introduction to this topic.
Finite Elements can, however, display severe practical limitations:
the construction of the finite-dimensional spaces hinges upon conforming meshes with elements of simple geometric shape;
the number of degrees of freedom on hexahedral elements can become very large when increasing the polynomial degree (see, e.g., \cite[Table 2]{Di-Pietro.Droniou.ea:20});
the definition of unisolvent degrees of freedom can be tricky for high-order versions (cf., e.g., \cite{Bonazzoli.Rapetti:17} and references therein).
A recent generalisation of FE methods is provided by the Isogeometric Analysis (IGA), which is designed to facilitate exchanges with Computer Assisted Design software.
In this framework, spline spaces and projection operators that verify a de Rham diagram have been developed in \cite{Buffa.Rivas.ea:11}, placing on solid grounds the IGA method for electromagnetism originally proposed in \cite{Buffa.Sangalli.ea:10}; see also \cite{Buffa.Sangalli.ea:14} for further developments.
Low-order frameworks that involve exact discrete sequences of spaces and operators on general polyhedral meshes have been developed over the last years, among which we cite Mimetic Finite Differences (see \cite{Beirao-da-Veiga.Lipnikov.ea:14} and references therein), the Discrete Geometric Approach (see, e.g., \cite{Codecasa.Specogna.ea:09}), or Compatible Discrete Operators (see \cite{Bonelle.Ern:15,Bonelle.Di-Pietro.ea:15} and also \cite{Bonelle:14}).
The above polyhedral frameworks are tightly related to the lowest-order version of the DDR sequence corresponding to $k=0$, and have already found their way in commercial codes. Compatible Discrete Operators, along with the more recent Hybrid High-Order (HHO) technology \cite{Di-Pietro.Droniou:20}, are available in widely used simulators such as Code\_Aster (\url{https://www.code-aster.org}) or Code\_Saturne (\url{https://www.code-saturne.org}).
More recently, a de Rham sequence of arbitrary-order virtual spaces has been proposed in \cite{Beirao-da-Veiga.Brezzi.ea:16}; see also the related works \cite{Beirao-da-Veiga.Brezzi.ea:18*1,Beirao-da-Veiga.Brezzi.ea:18*2} concerning the Virtual Element approximation of the magnetostatic problem based on the formulation of \cite{Kanayama.Motoyama.ea:90}.
Virtual spaces are spanned by functions whose expression is not available at each point, therefore projections on polynomial spaces are used in practice.
For this reason, the exactness of the virtual sequence cannot be directly exploited to prove the stability of numerical schemes.
Fully nonconforming approximations of the magnetostatic problem have also been explored, where stability is ensured by additional penalty terms.
We mention here, in particular, the Discontinuous Galerkin method of \cite{Perugia.Schotzau.ea:02}, the Hybridizable Discontinuous Galerkin methods of \cite{Nguyen.Peraire.ea:11,Chen.Cui.ea:19,Chen.Qiu.ea:17}, and, on polyhedral meshes, the HHO methods of \cite{Chave.Di-Pietro.ea:20,Chave.Di-Pietro.ea:20*1}.

The approach proposed in this work relies on a global DDR sequence involving spaces of (polynomial) discrete unknowns and discrete counterparts of vector operators acting thereon.
This sequence is obtained patching the local spaces constructed in \cite{Di-Pietro.Droniou.ea:20} by enforcing the single-valuedness of the components of these spaces on the element boundaries.
Its exactness, which constitutes the first key contribution of this work, is proved in Theorem \ref{thm:exactness} below, leveraging the exactness of the local sequence and a topological assembly of the mesh valid for domains that do not enclose any void.
The second contribution of this work is the development and well-posedness analysis of a discretisation method for the mixed formulation \eqref{eq:weak} of the magnetostatic problem (the first, to our knowledge, supporting polyhedral meshes).
The discrete problem is formulated in terms of the spaces and operators appearing in the global DDR sequence, along with discrete counterparts of $\Leb^2$-products.
For this reason, the stability and well-posedness (Theorem \ref{thm:stability} and Corollary \ref{cor:well-posedness}) of the discrete problem are direct consequences of the exactness of the DDR sequence, together with uniform Poincar\'e inequalities for the discrete divergence and curl operators.
Besides supporting polyhedral meshes and arbitrary orders, the proposed method has fewer unknowns than (non-serendipity) Finite Elements on hexahedra (cf.\ Remark \ref{rem:comparison} below) and allows for great freedom in the practical implementation of the polynomial spaces that lie at its core.
The convergence rate of the method is numerically evaluated on a set of standard and polyhedral refined mesh families.
When the DDR sequence of degree $k\ge 0$ is used, the error in the natural energy norm associated with the problem behaves as $h^{k+1}$, with $h$ denoting the mesh size.

The rest of this paper is organised as follows.
In Section \ref{sec:setting} we introduce the setting, recalling the appropriate notion of polyhedral mesh, the definitions of vector operators on faces, and those of the local polynomial spaces.
In Section \ref{sec:sequence} we define the global DDR sequence and prove the required exactness relations.
Section \ref{sec:discretisation} contains the statement of the discrete problem along with a theoretical well-posedness result and a numerical assessment of its convergence rate.
In Section \ref{sec:implementation} we discuss the practical implementation.
Finally, Appendix \ref{sec:inf-sup} contains the proof of the stability result, which is based on uniform discrete Poincar\'e inequalities.
The paper is structured so as to offer two levels of reading. In particular, the implementation aspects in Section \ref{sec:implementation} and the proofs in Appendix \ref{sec:inf-sup} are rather technical, and the reader mainly interested in the formulation of the proposed numerical scheme can skip them at first reading.


\section{Setting}\label{sec:setting}

In this section we define the discrete setting: the mesh, the vector operators on faces, and various polynomial spaces that appear in the construction.

\subsection{Mesh}

Given a set $X\subset\Real^3$, denote by $h_X$ its diameter, that is, the supremum of the distance between two points of $X$.
We consider meshes $\Mh\coloneq\Th\cup\Fh\cup\Eh\cup\Vh$, where:
  (i) $\Th$ is a finite collection of polyhedral elements that partition $\Omega$ and such that $h=\max_{T\in\Th}h_T>0$;
  (ii) $\Fh$ is a finite collection of planar faces;
  (iii) $\Eh$ is the set collecting the polygonal edges (line segments) of the faces;
  (iv) $\Vh$ is the set collecting the edge endpoints.
  It is assumed, in what follows, that $(\Th,\Fh)$ matches the conditions in \cite[Definition 1.4]{Di-Pietro.Droniou:20}.
  Notice that this notion of mesh is related to that of cellular (or CW) complex from algebraic topology \cite[Chapter 7]{Spanier:94}.
We additionally assume that the polytopes in $\Th\cup\Fh$ are simply connected and have connected boundaries that are Lipschitz-continuous (that is, each polytope can locally be represented as the epigraph, in the corresponding dimension 2 or 3, of a Lipschitz-continuous function).
We denote by $\Fhb\subset\Fh$ the set of boundary faces, that is, faces contained in $\partial\Omega$.

For all $F\in\Fh$, an orientation is set by prescribing a unit normal vector $\normal_F$. Similarly, each edge $E\in\Eh$ is endowed with a unit tangent vector $\tangent_E$ defining its orientation.
Given a mesh element $T\subset\Real^3$, we denote by $\FT\subset\Fh$ the set of faces contained in the boundary $\partial T$ of $T$.
For all $F\in\FT$, we denote by $\omega_{TF}\in\{-1,1\}$ the orientation of $F$ relative to $T$, that is, $\omega_{TF}=1$ if $\normal_F$ points out of $T$, $-1$ otherwise.
With this choice, $\omega_{TF}\normal_F$ is the unit vector normal to $F$ that points out of $T$.
Similarly, for a face $F$, we denote by $\EF$ the set of edges that lie on the boundary $\partial F$ of $F$.
The boundary of $F$ is oriented counter-clockwise with respect to $\normal_F$, and we denote by $\omega_{FE}\in\{-1,1\}$ the orientation of $\tangent_E$ opposite to $\partial F$: $\omega_{FE}=1$ if $\tangent_E$ points on $E$ in the opposite orientation to $\partial F$, $\omega_{FE}=-1$ otherwise.
For any polygon $F$ and any edge $E\in\EF$, we also denote by $\normal_{FE}$ the unit normal vector to $E$ lying in the plane of $F$ such that $(\tangent_E,\normal_{FE})$ forms a system of right-handed coordinates in the plane of $F$, which means that the system of coordinates $(\tangent_E,\normal_{FE},\normal_F)$ is right-handed.
It can be checked that $\omega_{FE}\normal_{FE}$ is the normal to $E$, in the plane where $F$ lies, pointing out of $F$.
In what follows, we will also need the set of edges of an element $T\subset\Real^3$, which we denote by $\ET$, and the set of vertices of a mesh entity $X\in\Mh$, which we denote by $\mathcal{V}_X$.
Finally, for any vertex $V\in\Vh$, we denote by $\bvec{x}_V$ the corresponding vector of coordinates.

\subsection{Vector operators on faces}

The DDR construction requires vector operators on faces.
Specifically, for any $F\in\Fh$, we respectively denote by $\GRAD_F$ and $\DIV_F$ the tangent gradient and divergence operators acting on smooth functions and, for any $r:F\to\Real$ smooth enough, we define the two-dimensional vector curl operator such that
\[
  \VROT_F r\coloneq \rotation{-\nicefrac\pi2}\left(\GRAD_F r\right),
\]
where $\rotation{-\nicefrac\pi2}$ is the rotation, in the oriented tangent space to $F$, of angle $-\frac\pi2$.
We will also need the two-dimensional scalar curl operator such that, for any $\bvec{v}:F\to\Real^2$ smooth enough,
\[
  \ROT_F\bvec{v}\coloneq\DIV_F \left(\rotation{-\nicefrac\pi2} \bvec{v}\right).
\]

\subsection{Polynomial spaces}\label{sec:setting:polynomial.spaces}

For given integers $\ell\ge -1$ and $n\ge 0$, we denote by $\mathbb{P}_n^\ell$ the space of $n$-variate polynomials of total degree $\le\ell$, with the convention that $\mathbb{P}_0^\ell\coloneq\Real$ for any $\ell$ and $\mathbb{P}_n^{-1}\coloneq\{0\}$ for any $n$.
Let $X$ be a polyhedron, a polygon (immersed in $\Real^3$), or a segment (again immersed in $\Real^3$). We denote by $\Poly{\ell}(X)$ the space spanned by the restriction to $X$ of functions in $\mathbb{P}_3^\ell$.
Denoting by $0\le n\le 3$ the dimension of $X$, $\Poly{\ell}(X)$ is isomorphic to $\mathbb{P}_n^\ell$ (the proof, quite simple, follows the ideas of \cite[Proposition 1.23]{Di-Pietro.Droniou:20}).
With a little abuse of notation, we denote both spaces with $\Poly{\ell}(X)$, and the exact meaning of this symbol should be inferred from the context.
We will also need the space $\Poly{0,\ell}(X)\coloneq\left\{q\in\Poly{\ell}(X)\st\int_X q=0\right\}$ spanned by functions in $\Poly{\ell}(X)$ with zero average over $X$.

For any $X\in\Th\cup\Fh\cup\Eh$, $\lproj{\ell}{X}:\Leb^1(X)\to\Poly{\ell}(X)$ is the $\Leb^2$-orthogonal projector such that, for any $q\in \Leb^1(X)$,
\[
  \int_X(\lproj{\ell}{X}q-q)r=0\qquad\forall r\in\Poly{\ell}(X).
\]
As a projector, $\lproj{\ell}{X}$ is polynomially consistent, that is, it maps any $r\in\Poly{\ell}(X)$ onto itself.
Optimal approximation properties for this projector have been proved in \cite{Di-Pietro.Droniou:17}; see also \cite{Di-Pietro.Droniou:17*1} for more general results on projectors on local polynomial spaces.
Letting $n$ be the dimension of $X$, we also denote by $\vlproj{\ell}{X}:\Leb^1(X)^n\to\Poly{\ell}(X)^n$ the vector version obtained applying the projector component-wise.

For any $F\in\Fh$ and any integer $\ell\ge -1$, we define the following relevant subspaces of $\Poly{\ell}(F)^2$:
\[
\begin{alignedat}{3}
  \Goly{\ell}(F)&\coloneq\GRAD_F\Poly{\ell+1}(F),
  &\qquad&
  \Goly{\ell}(F)^\perp&\coloneq\text{$\Leb^2$-orthogonal complement of $\Goly{\ell}(F)$ in $\Poly{\ell}(F)^2$},
  \\
  \Roly{\ell}(F)&\coloneq\VROT_F\Poly{\ell+1}(F),
  &\qquad&
  \Roly{\ell}(F)^\perp&\coloneq\text{$\Leb^2$-orthogonal complement of $\Roly{\ell}(F)$ in $\Poly{\ell}(F)^2$}.
\end{alignedat}
\]
The corresponding $\Leb^2$-orthogonal projectors are, with obvious notation, $\Gproj{\ell}{F}$, $\GOproj{\ell}{F}$, $\Rproj{\ell}{F}$, and $\ROproj{\ell}{F}$.
Similarly, for any  $T\in\Th$ and any integer $\ell\ge -1$ we introduce the following subspaces of $\Poly{\ell}(T)^3$:
\[
\begin{alignedat}{3}
  \Goly{\ell}(T)&\coloneq\GRAD\Poly{\ell+1}(T),
  &\qquad&
  \Goly{\ell}(T)^\perp&\coloneq\text{$\Leb^2$-orthogonal complement of $\Goly{\ell}(T)$ in $\Poly{\ell}(T)^3$},
  \\
  \Roly{\ell}(T)&\coloneq\CURL\Poly{\ell+1}(T),
  &\qquad&
  \Roly{\ell}(T)^\perp&\coloneq\text{$\Leb^2$-orthogonal complement of $\Roly{\ell}(T)$ in $\Poly{\ell}(T)^3$}.
\end{alignedat}
\]
The corresponding $\Leb^2$-orthogonal projectors are $\Gproj{\ell}{T}$, $\GOproj{\ell}{T}$, $\Rproj{\ell}{T}$, and $\ROproj{\ell}{T}$.

At the global level, we will need the space of broken polynomial functions of total degree $\le\ell$ defined by
\begin{equation}\label{eq:Poly.ell.Th}
  \Poly{\ell}(\Th)\coloneq\left\{
  q\in \Leb^2(\Omega)\st q_{|T}\in\Poly{\ell}(T)\quad\forall T\in\Th
  \right\}.
\end{equation}


\section{Global DDR sequence}\label{sec:sequence}

In this section, we define a DDR sequence mimicking the de Rham sequence \eqref{eq:continuous.sequence}.
Each space in the DDR sequence consists of vectors of polynomial functions attached to appropriate geometric entities of the mesh in order to imitate, through their single-valuedness, the continuity properties of the corresponding space in the continuous sequence.
The discrete vector operators in the DDR sequence are defined taking $\Leb^2$-orthogonal projections of \emph{full} operators, each mimicking an appropriate version of the Stokes formula.
The adjective full refers to the fact that these operators map on full polynomial spaces (and, correspondingly, enjoy optimal approximation properties).
Full operators that only appear in the discrete sequence through projections are identified by a dot.
The correspondence between continuous and discrete spaces and operators are summarised in Tables \ref{tab:correspondence:spaces} and \ref{tab:correspondence:operators}, respectively.
For the sake of brevity, we recall here only the main facts and refer to \cite{Di-Pietro.Droniou.ea:20} for a more detailed presentation of the local DDR sequence.

\begin{table} \centering
  \begin{tabular}{ccc}
    \toprule
    Continuous space & Discrete space & Definition \\
    \midrule
    $\Real$ & $\Real$ & --- \\
    $\Hil^1(\Omega)$ & $\Xgrad[h]$ & Eq. \eqref{eq:Xgrad.h} \\
    $\Hcurl{\Omega}$ & $\Xcurl[h]$ & Eq. \eqref{eq:Xcurl.h}  \\
    $\Hdiv{\Omega}$ & $\Xdiv[h]$ & Eq. \eqref{eq:Xdiv.h} \\
    $\Leb^2(\Omega)$ & $\Poly{k}(\Th)$ & Eq. \eqref{eq:Poly.ell.Th} \\
    \bottomrule
  \end{tabular}
  \caption{Correspondence between continuous and discrete spaces in the de Rham \eqref{eq:continuous.sequence} and DDR \eqref{eq:ddr.sequence} sequences.\label{tab:correspondence:spaces}}
\end{table}

\begin{table} \centering
  \begin{tabular}{ccc}
    \toprule
    Continuous operator & Discrete operator & Definition \\
    \midrule
    $\GRAD$ & $\uGh$ & Eq. \eqref{eq:uGh} \\
    $\CURL$ & $\uCh$ & Eq. \eqref{eq:uCh} \\
    $\DIV$ & $\Dh$ & Eq. \eqref{eq:Dh} \\
    \bottomrule
  \end{tabular}
  \caption{Correspondence between continuous and discrete vector operators in the de Rham \eqref{eq:continuous.sequence} and DDR \eqref{eq:ddr.sequence} sequences.\label{tab:correspondence:operators}}
\end{table}

As pointed out in \cite[Section 2]{Di-Pietro.Droniou.ea:20}, the exactness relations
\begin{equation}\label{eq:exactness}
  \Image\CURL=\Ker\DIV,\qquad\Image\DIV = \Leb^2(\Omega)
\end{equation}
play a key role in the well-posedness of problem \eqref{eq:weak}.
The main result of this section, proved in Theorem \ref{thm:exactness}, is a discrete counterpart of \eqref{eq:exactness} for the global DDR sequence.
In what follows, we fix an integer $k\ge 0$ corresponding to the polynomial degree of the sequence.

\subsection{Discrete $\Hil^1(\Omega)$ space and full gradient operators}\label{sec:disc.Hgrad}

The discrete counterpart of the space $\Hil^1(\Omega)$ is
\begin{equation}\label{eq:Xgrad.h}
\begin{aligned}
  \Xgrad[h]\coloneq\Big\{
  \underline{r}_h=\big({}&
  (r_T)_{T\in\Th}, (r_F)_{F\in\Fh}, (r_E)_{E\in\Eh}, (r_V)_{V\in\Vh}
  \big)\st
  \\
  {}&\text{$r_T\in\Poly{k-1}(T)$ for all $T\in\Th$,
    $r_F\in\Poly{k-1}(F)$ for all $F\in\Fh$,}
  \\
  {}&\text{$r_E\in\Poly{k-1}(E)$ for all $E\in\Eh$,    
  and $r_V\in\Real$ for all $V\in\Vh$}
  \Big\}.
\end{aligned}
\end{equation}
The restrictions of $\Xgrad[h]$ and $\underline{r}_h\in\Xgrad[h]$ to a mesh element, face, or edge $X\in\Th\cup\Fh\cup\Eh$ are denoted by, respectively, $\Xgrad[X]$ and $\underline{r}_X\in\Xgrad[X]$.
The interpolator on $\Xgrad[h]$ is $\uIgrad[h]:C^0(\overline{\Omega})\to\Xgrad[h]$ such that, for all $r\in C^0(\overline{\Omega})$,
\begin{equation}\label{eq:uIgradh}
  \uIgrad[h] r
  \coloneq\big(
  (\lproj{k-1}{T} r)_{T\in\Th},
  (\lproj{k-1}{F} r)_{F\in\Fh},
  (\lproj{k-1}{E} r)_{E\in\Eh},
  (r(\bvec{x}_V))_{V\in\Vh}
  \big),
\end{equation}
that is, the discrete element $\uIgrad[h] r$ representing $r$ is obtained taking the $\Leb^2$-projections of $r$ on the polynomial spaces composing $\Xgrad[h]$.

For any edge $E\in\Eh$ and any $\underline{r}_E=(r_E,(r_V)_{V\in\VE})\in\Xgrad[E]$, denote by $\trE\underline{r}_E$ the unique polynomial in $\Poly{k+1}(E)$ such that $(\trE\underline{r}_E)(\bvec{x}_V)=r_V$ for all $V\in\VE$ and $\lproj{k-1}{E}(\trE\underline{r}_E)=r_E$.
We define the (full) edge gradient $\GE:\Xgrad[E]\to\Poly{k}(E)$ setting
\begin{equation}\label{eq:GE}
\GE\underline{r}_E \coloneq (\trE\underline{r}_E)'\qquad\forall\underline{r}_E\in\Xgrad[E],
\end{equation}
where the derivative is taken along $E$ in the direction of $\tangent_E$.

\begin{remark}[Edge unknowns]\label{rem:continuous.poly.skeleton}
  Selecting a family $((r_E)_{E\in\Eh}, (r_V)_{V\in\Vh})$ of edge and vertex degrees of freedom is equivalent to selecting a function $q:\bigcup_{E\in\Eh}\overline{E}\to\Real$ on the edge skeleton that is polynomial of degree $\le k+1$ on each edge and continuous at the vertices (hence, $q$ is globally continuous on the edge skeleton). This function is simply given by $q_{|E}=\trE\underline{r}_E$ for all $E\in\Eh$.
\end{remark}

For any face $F\in\Fh$, we define the full face gradient $\GF:\Xgrad[F]\to\Poly{k}(F)^2$ such that, for all $\underline{r}_F\in\Xgrad[F]$,
\[
\int_F\GF\underline{r}_F\cdot\bvec{v}
= -\int_F r_F\DIV_F\bvec{v}
+ \sum_{E\in\EF}\omega_{FE}\int_E\trE\underline{r}_E(\bvec{v}\cdot\normal_{FE})
\qquad\forall \bvec{v}\in\Poly{k}(F)^2,
\]
along with the corresponding scalar potential $\trF:\Xgrad[F]\to\Poly{k+1}(F)$ such that, for all $\underline{r}_F\in\Xgrad[F]$,
\[
\trF\underline{r}_F \coloneq r_F + \trFtilde\underline{r}_F - \lproj{k-1}{F}(\trFtilde\underline{r}_F).
\]
Above, $\trFtilde:\Xgrad[F]\to\Poly{k+1}(F)$ is a face potential reconstruction that is consistent for polynomials of total degree $\le k+1$, that is, denoting by $\uIgrad[F]:C^0(\overline{F})\to\Xgrad[F]$ the restriction of the interpolator \eqref{eq:uIgradh} to $F$, $\trFtilde(\uIgrad[F] r) = r$ for all $r\in\Poly{k+1}(F)$.
Finally, for all $T\in\Th$, the full element gradient $\GT:\Xgrad[T]\to\Poly{k}(T)^3$ is such that, for all $\underline{r}_T\in\Xgrad[T]$,
\[
\int_T\GT\underline{r}_T\cdot\bvec{v}
= -\int_Tr_T\DIV\bvec{v}
+ \sum_{F\in\FT}\omega_{TF}\int_F\trF\underline{r}_F(\bvec{v}\cdot\normal_F)
\qquad\forall\bvec{v}\in\Poly{k}(T)^3.
\]

\subsection{Discrete $\Hcurl{\Omega}$ space and full curl operators}\label{sec:disc.Hcurl}

The role of the space $\Hcurl{\Omega}$ is played at the discrete level by
\begin{equation}\label{eq:Xcurl.h}
\begin{aligned}
  \Xcurl[h]\coloneq\Big\{
  \uvec{\upsilon}_h=\big({}&
  (\bvec{\upsilon}_{\cvec{R},T},\bvec{\upsilon}_{\cvec{R},T}^\perp)_{T\in\Th},
  (\bvec{\upsilon}_{\cvec{R},F},\bvec{\upsilon}_{\cvec{R},F}^\perp)_{F\in\Fh},
  (\upsilon_E)_{E\in\Eh}
  \big)\st
  \\
  {}&\text{
    $\bvec{\upsilon}_{\cvec{R},T}\in\Roly{k-1}(T)$ and $\bvec{\upsilon}_{\cvec{R},T}^\perp\in\Roly{k}(T)^\perp$ for all $T\in\Th$,
  }
  \\
  {}&\text{
    $\bvec{\upsilon}_{\cvec{R},F}\in\Roly{k-1}(F)$ and $\bvec{\upsilon}_{\cvec{R},F}^\perp\in\Roly{k}(F)^\perp$ for all $F\in\Fh$,
  }
  \\
  {}&\text{and $\upsilon_E\in\Poly{k}(E)$ for all $E\in\Eh$}
  \Big\}.
\end{aligned}
\end{equation}
The restrictions of $\Xcurl[h]$ and $\uvec{\upsilon}_h\in\Xcurl[h]$ to a mesh element or face $X\in\Th\cup\Fh$ are denoted by, respectively, $\Xcurl[X]$ and $\uvec{\upsilon}_X\in\Xcurl[X]$.

For any face $F\in\Fh$, we reconstruct the discrete (full) face curl $\CF:\Xcurl[F]\to\Poly{k}(F)$ approximating $\ROT_F$ such that, for all $\uvec{\upsilon}_F\in\Xcurl[F]$,
\begin{equation}\label{eq:CF}
  \int_F\CF\uvec{\upsilon}_F q
  = \int_F\bvec{\upsilon}_{\cvec{R},F}\cdot\VROT_F q
  - \sum_{E\in\EF}\omega_{FE}\int_E\upsilon_E q\qquad
  \forall q\in\Poly{k}(F),
\end{equation}
as well as the discrete tangential face potential $\trFt:\Xcurl[F]\to\Poly{k}(F)^2$ such that, for all $\uvec{\upsilon}_F\in\Xcurl[F]$, it holds, for all $(r,\bvec{\tau})\in\Poly{0,k+1}(F)\times\Roly{k}(F)^\perp$,
\begin{equation}\label{eq:trFt}
  \int_F\trFt\uvec{\upsilon}_F\cdot\left(\VROT_F r + \bvec{\tau}\right)
  \\
  = \int_F \CF\uvec{\upsilon}_F r
  + \sum_{E\in\EF}\omega_{FE}\int_E \upsilon_E r
  + \int_F\bvec{\upsilon}_{\cvec{R},F}^\perp\cdot\bvec{\tau}.
\end{equation}
Similarly, for any element $T\in\Th$, we define the discrete full element curl operator $\CT:\Xcurl\to\Poly{k}(T)^3$ such that, for all $\uvec{\upsilon}_T\in\Xcurl$,
\begin{equation}\label{eq:CT}
  \int_T\CT\uvec{\upsilon}_T\cdot\bvec{\tau}
  = \int_T\bvec{\upsilon}_{\cvec{R},T}\cdot\CURL\bvec{\tau}
  + \sum_{F\in\FT}\omega_{TF}\int_F\trFt\uvec{\upsilon}_F\cdot(\bvec{\tau}\times\normal_F)
  \qquad\forall\bvec{\tau}\in\Poly{k}(T)^3.
\end{equation}

\subsection{Discrete $\Hdiv{\Omega}$ space and full divergence operator}

The discrete counterpart of the space $\Hdiv{\Omega}$ is
\begin{equation}\label{eq:Xdiv.h}
\begin{aligned}
  \Xdiv[h]\coloneq\Big\{
  \uvec{w}_h=\big({}&
  (\bvec{w}_{\cvec{G},T},\bvec{w}_{\cvec{G},T}^\perp)_{T\in\Th},
  (w_F)_{F\in\Fh}
  \big)\st
  \\
  {}&\text{$\bvec{w}_{\cvec{G},T}\in\Goly{k-1}(T)$ and $\bvec{w}_{\cvec{G},T}^\perp\in\Goly{k}(T)^\perp$ for all $T\in\Th$,}
  \\
  {}&\text{and $w_F\in\Poly{k}(F)$ for all $F\in\Fh$}
  \Big\}.
\end{aligned}
\end{equation}
The restrictions of $\Xdiv[h]$ and $\uvec{w}_h\in\Xdiv[h]$ to a mesh element $T\in\Th$ are denoted by, respectively, $\Xdiv$ and $\uvec{w}_T\in\Xdiv$.

For any $T\in\Th$, we define the (full) discrete divergence reconstruction $\DT:\Xdiv\to\Poly{k}(T)$ such that, for any $\uvec{w}_T\in\Xdiv$,
\begin{equation}\label{eq:DT}
  \int_T\DT\uvec{w}_T q
  = -\int_T\bvec{w}_{\cvec{G},T}\cdot\GRAD q
  + \sum_{F\in\FT}\omega_{TF}\int_F w_F q
  \qquad\forall q\in\Poly{k}(T).
\end{equation}

\subsection{Global DDR sequence and exactness}

Define the discrete counterparts $\uGh:\Xgrad[h]\to\Xcurl[h]$ of the gradient operator, $\uCh:\Xcurl[h]\to\Xdiv[h]$ of the curl operator, and $\Dh:\Xdiv[h]\to\Poly{k}(\Th)$ of the divergence operator such that, for all $(\underline{r}_h,\uvec{\upsilon}_h,\uvec{w}_h)\in\Xgrad[h]\times\Xcurl[h]\times\Xdiv[h]$,
\begin{gather}\label{eq:uGh}
  \uGh\underline{r}_h
  \coloneq\big(
  (\Rproj{k-1}{T}\GT\underline{r}_T, \ROproj{k}{T}\GT\underline{r}_T)_{T\in\Th},
  (\Rproj{k-1}{F}\GF\underline{r}_F, \ROproj{k}{F}\GF\underline{r}_F)_{F\in\Fh},
  (\GE\underline{r}_E)_{E\in\Eh}
  \big),
  \\ \label{eq:uCh}
  \uCh\uvec{\upsilon}_h
  \coloneq\big(
  (\Gproj{k-1}{T}\CT\uvec{\upsilon}_T, \GOproj{k}{T}\CT\uvec{\upsilon}_T)_{T\in\Th},
  (\CF\uvec{\upsilon}_F)_{F\in\Fh}
  \big),
  \\ \label{eq:Dh}
  (\Dh\uvec{w}_h)_{|T}\coloneq \DT\uvec{w}_T\qquad\forall T\in\Th.
\end{gather}
Recalling the definition \eqref{eq:uIgradh} of the interpolator on $\Xgrad[h]$, the global DDR sequence reads
\begin{equation}\label{eq:ddr.sequence}
  \begin{tikzcd}
    \Real\arrow{r}[above=2pt]{\uIgrad[h]}
    & \Xgrad[h]\arrow{r}[above=2pt]{\uGh}
    & \Xcurl[h]\arrow{r}[above=2pt]{\uCh}
    & \Xdiv[h]\arrow{r}[above=2pt]{\Dh}
    & \Poly{k}(\Th)\arrow{r}[above=2pt]{0}
    & \{0\}.
  \end{tikzcd}
\end{equation}
\begin{remark}[Comparison with Finite and Virtual Element Methods]
   A thorough comparison between the DDR approach and standard Finite Elements on tetrahedral and hexahedral meshes has been carried out in \cite{Di-Pietro.Droniou.ea:20}; see, in particular, Tables 1 and 2 therein.
    
    Similarities and differences exist between the degrees of freedom of Virtual Element sequences and the polynomial components of the spaces in the DDR sequence \eqref{eq:ddr.sequence}.
    Specifically, contrary \eqref{eq:ddr.sequence}, the Virtual Element sequence of \cite{Beirao-da-Veiga.Brezzi.ea:16} is composed of spaces whose degree decreases by one at each application of the exterior derivative. In \cite{Beirao-da-Veiga.Brezzi.ea:18*1}, the authors focus on a lowest order version of the method, while, in \cite{Beirao-da-Veiga.Brezzi.ea:18*2}, they develop serendipity versions of Virtual Element spaces with fewer degrees of freedom with respect to those in the original sequence.
    As pointed out in \cite[Section 5.5]{Di-Pietro.Droniou:20}, there exists a duality between fully discrete approaches such as DDR or HHO and Virtual Elements in the sense that both interpretations can co-exist for a given scheme.
    From the analysis standpoint, working in a fully discrete framework can have advantages in some cases, linked, in particular, to the fact that it does not require to estimate the approximation properties of virtual spaces; see \cite{Di-Pietro.Droniou:18} and, in particular, Section 3.2 therein, devoted to the analysis of conforming and non-conforming Virtual Element methods.

    To close this remark, we emphasize that both the DDR and Virtual Element approaches can, in principle, be used in conjunction with Finite Elements on computational meshes that feature both standard and polyhedral elements.
\end{remark}
The following theorem establishes a discrete counterpart of the exactness relations \eqref{eq:exactness}, a crucial ingredient to prove the well-posedness of the discrete problem.
\begin{theorem}[Exactness]\label{thm:exactness}
  It holds
  \begin{align} \label{eq:Im.div=Poly.Th}
    \Image\Dh ={}& \Poly{k}(\Th),\\
      \label{eq:Im.curl=Ker.div}
    \Image\uCh ={}& \Ker\Dh.
  \end{align}
\end{theorem}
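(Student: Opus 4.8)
The plan is to derive the two global exactness relations from the corresponding local exactness properties of the DDR complex established in \cite{Di-Pietro.Droniou.ea:20}, combined with a ``topological'' bootstrap over the mesh that exploits the assumption that $\Omega$ encloses no void. I would structure the argument as follows.

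\textbf{Step 1: Surjectivity of $\Dh$ (proof of \eqref{eq:Im.div=Poly.Th}).} This is the easy half. Fix any $q_h\in\Poly{k}(\Th)$. Working element by element, I would use the local surjectivity of $\DT:\Xdiv\to\Poly{k}(T)$ (part of the local DDR exactness in \cite{Di-Pietro.Droniou.ea:20}) to pick, for each $T\in\Th$, some $\uvec{w}_T\in\Xdiv$ with $\DT\uvec{w}_T=q_{h|T}$. The only subtlety is that the face components $w_F$ must be single-valued across interfaces in order to assemble a genuine element of $\Xdiv[h]$. This is handled by first choosing the face unknowns $(w_F)_{F\in\Fh}$ arbitrarily (say all zero), and then, on each element, choosing only the element-internal unknowns $(\bvec{w}_{\cvec{G},T},\bvec{w}_{\cvec{G},T}^\perp)$ to adjust $\DT\uvec{w}_T$ to the target: since $\DT$ restricted to the internal unknowns is already surjective onto $\Poly{k}(T)$ (indeed $\GRAD\Poly{0,k+1}(T)=\Goly{k}(T)$ and the map $q\mapsto -\int_T\bvec{w}_{\cvec{G},T}\cdot\GRAD q$ can realise any element of $\Poly{0,k}(T)$, while the constant part of $\DT\uvec{w}_T$ is absorbed by $\sum_{F}\omega_{TF}\int_F w_F$ after a correction), one can match $q_{h|T}$ locally without touching the shared face data. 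I would make this local surjectivity statement precise by invoking the corresponding result from \cite{Di-Pietro.Droniou.ea:20} directly rather than re-deriving it.

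\textbf{Step 2: Inclusion $\Image\uCh\subseteq\Ker\Dh$.} This is a local computation: for $\uvec{\upsilon}_h\in\Xcurl[h]$ and any $T\in\Th$, one shows $\DT(\uCh\uvec{\upsilon}_h)_{|T}=\DT(\Gproj{k-1}{T}\CT\uvec{\upsilon}_T,\GOproj{k}{T}\CT\uvec{\upsilon}_T,(\CF\uvec{\upsilon}_F)_{F\in\FT})=0$. Unwinding \eqref{eq:DT}, the bulk term reproduces $-\int_T\CT\uvec{\upsilon}_T\cdot\GRAD q$ (using that $\GRAD q\in\Goly{k-1}(T)\subseteq\Roly{k}(T)\oplus\cdots$ so the projections act transparently against $\GRAD q$), the face terms give $\sum_{F\in\FT}\omega_{TF}\int_F\CF\uvec{\upsilon}_F\, q$, and these cancel by the definition \eqref{eq:CT} of $\CT$ tested against $\bvec{\tau}=\GRAD q$ together with the definition \eqref{eq:CF} of $\CF$; this is exactly the local complex property $\DT\circ\uCT=0$ from \cite{Di-Pietro.Droniou.ea:20}, so I would cite it.

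\textbf{Step 3: The reverse inclusion $\Ker\Dh\subseteq\Image\uCh$ (the main obstacle).} This is the heart of the theorem and where the topology of $\Omega$ enters. Let $\uvec{w}_h\in\Ker\Dh$. The strategy is a ``mesh-by-mesh'' reconstruction: enumerate the elements $T_1,\dots,T_N$ in an order compatible with a spanning-tree-like structure of the element adjacency graph (which is connected since $\Omega$ is, and has ``trivial loops'' in the relevant sense because the second Betti number vanishes — this is the ``topological assembly of the mesh'' mentioned in the introduction). On each element $T$, the local exactness $\Ker\DT=\Image\uCT$ from \cite{Di-Pietro.Droniou.ea:20} — which holds because elements are simply connected with connected boundary — gives a local preimage; the difficulty is to \emph{glue} these local preimages into a single $\uvec{\upsilon}_h\in\Xcurl[h]$, i.e. to make the face- and edge-components single-valued. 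Concretely, I expect to process faces interface by interface: having fixed the preimage on already-treated elements, when moving to a new element $T$ sharing faces/edges with the treated region, the data on those shared faces/edges is already prescribed, and one must show the local problem on $T$ still admits a solution with those prescribed boundary values. This is a \emph{local surjectivity with boundary data} statement: given the face tangential potentials and edge values on $\partial T$ (which, by construction, satisfy the compatibility condition $\DT=0$ restricted to $T$ because $\uvec{w}_h\in\Ker\Dh$ and the previously-built data is consistent), there is a $\uvec{\upsilon}_T$ realising them whose curl is $\uvec{w}_T$. The vanishing of the second Betti number is what guarantees there is no global obstruction (no ``cohomology class'') preventing the accumulated boundary data from being consistent when the reconstruction closes up around a loop of elements; concretely it ensures that $\Poly{k}(\Th)=\Image\Dh$ has the ``right'' dimension so that a dimension count $\dim\Ker\Dh=\dim\Xcurl[h]-\dim\Image\uCh$ closes, or equivalently that the Euler-characteristic/rank relation forced by the complex being exact at $\Xgrad[h]$ and $\Xgrad[h]\xrightarrow{\uGh}\Xcurl[h]$ (the other half of global exactness, presumably proved alongside) leaves no slack.

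\textbf{Step 4: Closing by dimension count (alternative to the explicit gluing).} A cleaner route for Step 3, which I would likely prefer, is to avoid the explicit element-by-element gluing and instead argue by dimensions. Once \eqref{eq:Im.div=Poly.Th} and $\Image\uCh\subseteq\Ker\Dh$ are known, it suffices to show $\dim\Image\uCh\ge\dim\Ker\Dh=\dim\Xdiv[h]-\dim\Poly{k}(\Th)$. For this one combines: (a) $\dim\Image\uCh=\dim\Xcurl[h]-\dim\Ker\uCh$; (b) $\Ker\uCh=\Image\uGh$ (the ``exactness at $\Xcurl[h]$'' statement, which for the gradient side follows more directly because $\GRAD$-exactness is essentially local plus the connectedness of $\Omega$); (c) $\dim\Image\uGh=\dim\Xgrad[h]-\dim\Ker\uGh$ with $\Ker\uGh=\Image\uIgrad[h]=\Real$ (again using connectedness of $\Omega$). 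Summing these identities reduces the claim to the single scalar relation
\[
\dim\Xgrad[h]-\dim\Xcurl[h]+\dim\Xdiv[h]-\dim\Poly{k}(\Th)=1,
\]
which is a purely combinatorial Euler-type identity for the polyhedral mesh $\Mh$, valid precisely because $\Omega$ is connected and encloses no void (first and third Betti numbers enter the general alternating sum, but only the connectedness contributes the $1$ on the right and the no-void assumption kills a potential $+b_2$ term). I would prove this identity by expanding all four dimensions as sums of local polynomial-space dimensions indexed by $\Th$, $\Fh$, $\Eh$, $\Vh$, collecting the contribution of each geometric entity, and recognising the result as $\#\Vh-\#\Eh+\#\Fh-\#\Th=1$ (the Euler characteristic of a contractible-up-to-tunnels cellular decomposition of $\overline{\Omega}$), possibly after invoking the cellular-complex structure noted in Section \ref{sec:setting}. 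The main obstacle throughout is establishing this topological/combinatorial input rigorously for general polyhedral meshes — the algebra of the local dimensions is routine, but justifying the Euler identity (equivalently, the triviality of the relevant homology) for the admissible meshes of \cite[Definition 1.4]{Di-Pietro.Droniou:20} is the delicate point.
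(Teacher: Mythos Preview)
Your preferred route (Step 4, the dimension count) has a genuine gap that makes it fail under the theorem's hypotheses. You assume $\Ker\uCh=\Image\uGh$ and claim this ``follows more directly because $\GRAD$-exactness is essentially local plus connectedness of $\Omega$''. That is false: exactness at $\Xcurl[h]$ is the discrete analogue of $\Ker\CURL=\Image\GRAD$, which requires the \emph{first} Betti number of $\Omega$ to vanish, not merely connectedness. The theorem only assumes the \emph{second} Betti number vanishes. On a solid torus ($b_1=1$, $b_2=0$) your input (b) is simply wrong, and correspondingly your Euler identity $\#\Vh-\#\Eh+\#\Fh-\#\Th=1$ fails (it equals $1-b_1$). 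So the dimension count cannot close without an extra hypothesis the theorem does not grant. The paper avoids this entirely by never touching exactness at $\Xcurl[h]$.

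Your Step 1 is also incomplete as written. Setting all $w_F=0$ forces $\int_T\DT\uvec{w}_T=0$ for every $T$ (take $q=1$ in \eqref{eq:DT}), so internal unknowns alone cannot reach a $q_h$ with nonzero elementwise averages; the ``correction'' you allude to would require choosing single-valued face data whose signed face integrals match $\int_T q_{h|T}$ on every element, which is itself a global problem you have not solved. The paper sidesteps this by taking $\bvec{v}\in\Hil^1(\Omega)^3$ with $\DIV\bvec{v}=q_h$ and setting $\uvec{w}_h=\uIdiv[h]\bvec{v}$; the commutation $\DT\circ\uIdiv=\lproj{k}{T}\circ\DIV$ then gives $\Dh\uvec{w}_h=q_h$ directly, and single-valuedness of face components is automatic from interpolating a globally defined function.

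Your Step 3 sketch is in the right spirit and is essentially what the paper does, but you miss the mechanism that makes the gluing work. The paper does not solve a ``local problem with prescribed boundary data''; instead, on the new element it takes \emph{any} local preimage $\uvec{\zeta}_{T}$ of $\uvec{w}_T$, observes that on a shared face $F$ the difference $\uvec{\zeta}_F-\uvec{\upsilon}_{F,\star}$ lies in $\Ker\CF=\Image\uvec{G}_F^k$, writes it as $\uvec{G}_F^k\underline{q}_{F}$, extends $\underline{q}_F$ to an element of $\Xgrad$ on the relevant patch, and subtracts the discrete gradient (which lies in $\Ker\uCT$) to force a match. The no-void assumption enters precisely because the mesh can be assembled by two moves---adding an element along one face, and gluing two faces already joined along a \emph{connected} edge path---and in the second move the connectedness of the pre-glued path is what lets you normalise $\underline{q}_F$ to vanish there, so the gradient correction does not disturb previously matched data. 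That connected-path condition is exactly what breaks for a domain enclosing a void (see the paper's Remark following the proof).
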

\begin{remark}[No voids assumption]
  The assumption that $\Omega$ does not enclose any void is only used to prove $\Ker\Dh\subset\Image\uCh$.
  The relations $\Image\Dh=\Poly{k}(\Th)$ and $\Image\uCh\subset\Ker\Dh$ hold for any polyhedral domain.
\end{remark}

\begin{proof}
{\bf 1. Proof of \eqref{eq:Im.div=Poly.Th}.}
We only have to show the inclusion $\Image\Dh\supset\Poly{k}(\Th)$. Let $q_h\in\Poly{k}(\Th)$. A classical result gives the existence of $\bvec{v}\in \Hil^1(\Omega)^3$ such that $\DIV\bvec{v}=q_h$ (see, e.g., \cite[Lemma 8.3]{Di-Pietro.Droniou:20} in the case where $q_h$ has a zero average over $\Omega$; the case of a generic $q_h$ follows easily since constant functions can be trivially written as divergences). Let $\uvec{w}_h\in\Xdiv[h]$ be the global interpolate of $\bvec{v}$, that is, $\bvec{w}_{\cvec{G},T}=\Gproj{k-1}{T}\bvec{v}$ and $\bvec{w}_{\cvec{G},T}^\perp=\GOproj{k}{T}\bvec{v}$ for all $T\in\Th$, while $w_F=\lproj{k}{F}(\bvec{v}\cdot\normal_F)$ for all $F\in\Fh$. Then, for all $T\in\Th$, $\uvec{w}_T$ is the local interpolate of $\bvec{v}$ in the element $T$ and by \cite[Lemma 25]{Di-Pietro.Droniou:20}, we have
\[
\DT\uvec{w}_T = \lproj{k}{T}(\DIV\bvec{v}_{|T})=\lproj{k}{T}q_{h|T}=q_{h|T}.
\]
This proves that $\Dh\uvec{w}_h=q_h$ and concludes the proof that $\Image\Dh=\Poly{k}(\Th)$.
\medskip\\
\noindent{\bf 2. Proof of \eqref{eq:Im.curl=Ker.div}.}
\smallskip\\
\underline{2.a) Proof that $\Image\uCh\subset \Ker\Dh$}.
Let $\uvec{w}_h\in\Image\uCh$.
For any $T\in\Th$, denote by $\uCT$ the restriction of $\uCh$ to $T$ such that, for all $\uvec{\upsilon}_T\in\Xcurl$,
\[
\uCT\uvec{\upsilon}_T\coloneq\big(
\Gproj{k-1}{T}\CT\uvec{\upsilon}_T, \GOproj{k}{T}\CT\uvec{\upsilon}_T,
(\CF\uvec{\upsilon}_F)_{F\in\FT}
\big).
\]
Then, for all $T\in\Th$, we have $\uvec{w}_T\in \Image\uCT=\Ker\DT$, the last equality following from the exactness of element-wise operators stated in \cite[Theorem 17]{Di-Pietro.Droniou.ea:20}. This shows that $\DT\uvec{w}_T=0$ for all $T\in\Th$, and thus that $\Dh\uvec{w}_h=0$ as required.
\smallskip\\
\underline{2.b) Proof that $\Ker\Dh\subset\Image\uCh$.}
Let $\uvec{w}_h\in\Ker\Dh$. We have to find $\uvec{\upsilon}_h\in\Xcurl[h]$ such that $\uCh\uvec{\upsilon}_h=\uvec{w}_h$. By definition of $\Dh$, for all $T\in\Th$ we have $\DT\uvec{w}_T=0$ and the local exactness stated in \cite[Theorem 17]{Di-Pietro.Droniou.ea:20} gives $\uvec{\upsilon}_T\in\Xcurl[T]$ such that $\uCT\uvec{\upsilon}_T=\uvec{w}_T$.
However, nothing ensures at that stage that the vectors $(\uvec{\upsilon}_T)_{T\in\Th}$ are the restrictions to the spaces $(\Xcurl[T])_{T\in\Th}$ of some $\uvec{\upsilon}_h\in\Xcurl[h]$; this only happens if the face and edge values of these local vectors $(\uvec{\upsilon}_T)_{T\in\Th}$ match between each pair of neighbouring elements.
\begin{figure}
  \centering
  \input{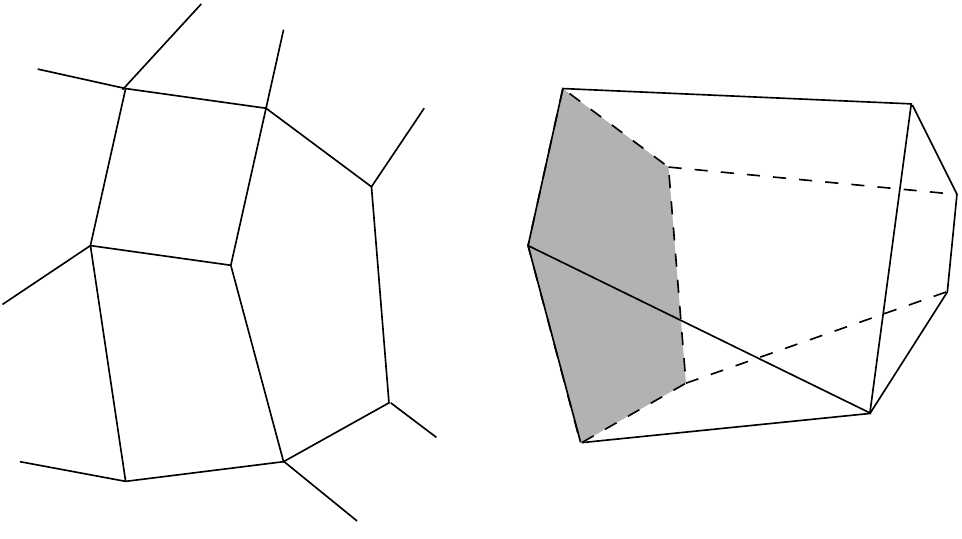_t}
  \caption{Illustration of Operation A in the proof of Theorem \ref{thm:exactness}: adding a new element to the mesh.
  \label{fig:adding_element}
  }
\end{figure}
\begin{figure}
  \centering
  \input{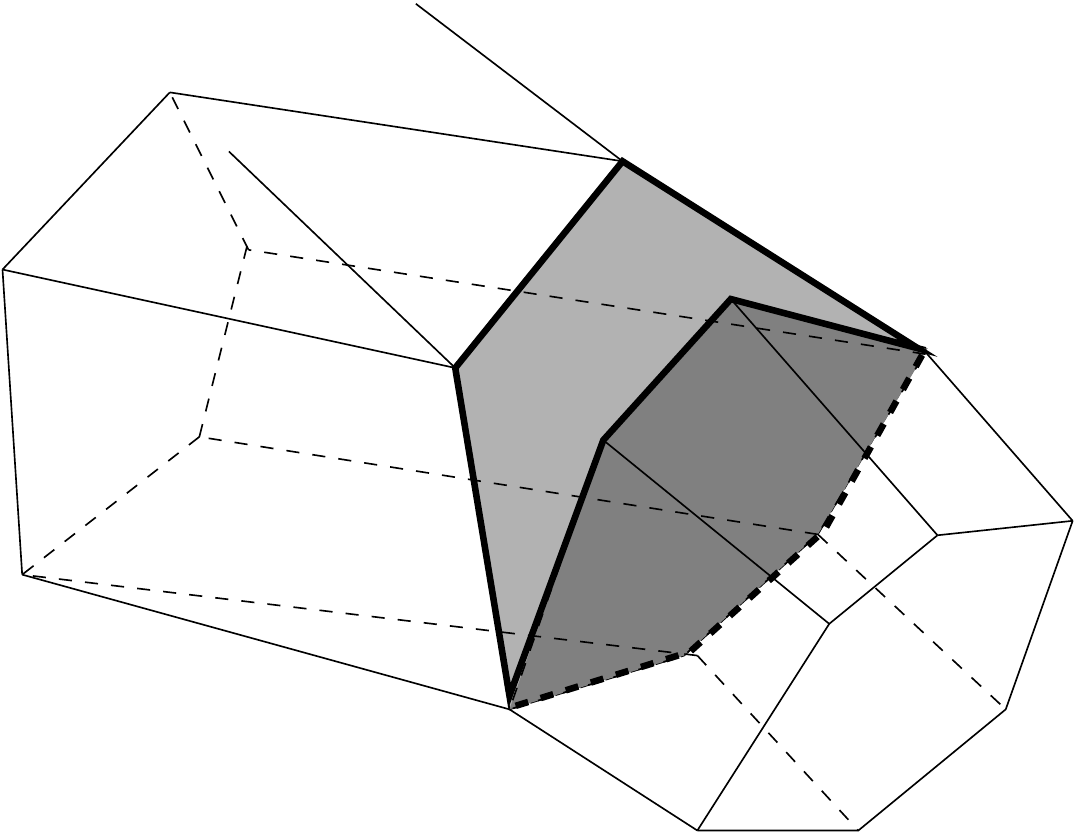_t}
  \caption{Illustration of Operation B in the proof of Theorem \ref{thm:exactness}: gluing two faces in a mesh (note that, at the start of this operation, the faces are already glued along the connected path from $V_1$ to $V_4$).
  \label{fig:gluing_faces}
  }
\end{figure}

  To construct local vectors with matching interface values, we use an inductive approach. Since $\Omega$ does not enclose any void, the mesh $\Mh$ can be topologically assembled starting from a single element by a succession of the following two operations:
\begin{enumerate}
\item[A.] Adding a new element by gluing one of its faces to the face of another element already in the mesh; see Figure \ref{fig:adding_element}.
\item[B.] Gluing together two faces of elements already in the mesh, such that the edges along which the faces are already glued together form a connected path (which could be empty); see Figure \ref{fig:gluing_faces}.
\end{enumerate}
We note that, for a domain enclosing one or more voids, these two operations alone would not be sufficient to construct the mesh (see Remark \ref{rem:domain.hole}).

From the analytical point of view, we do not ``deform/move'' the elements, their faces, or edges to actually assemble the mesh; our elements/faces/edges are those already in the final mesh. However, we will interpret the topological aspects of this construction the following way: polynomial functions defined on faces/edges that are already glued together are single-valued (as in the global space $\Xcurl[h]$), while polynomial functions on faces/edges that are not yet ``glued'' have two values, one for each element on each side of the face we are gluing along. We nonetheless preserve the topological vocabulary of ``gluing'' faces together as it seems more intuitive and helps following the arguments in the proof.

The inductive construction of $\uvec{\upsilon}_h\in\Xcurl[h]$ such that $\Ch\uvec{\upsilon}_h=\uvec{w}_h$ starts from one element and follows the two operations described above.  The base case is already covered above: if $T\in\Th$, \cite[Theorem 17]{Di-Pietro.Droniou.ea:20} gives a pre-image $\uvec{\upsilon}_T$ through $\uCT$ of $\uvec{w}_T$. We therefore only have to consider the inductive step: starting from a submesh $\Mh[h,\star]$ of $\Mh$ (possibly with some unglued faces) and an element
\begin{equation}\label{eq:ind.star}
\uvec{\upsilon}_{h,\star}\in \Xcurl[h,\star]\quad\mbox{ such that }\quad\uCT\uvec{\upsilon}_{h,\star}=\uvec{w}_T\qquad\forall T\in\Th[h,\star],
\end{equation}
and considering a submesh $\Mh[h,\diamondsuit]$ of $\Mh$ built from $\Mh[h,\star]$ through one of the operations A or B, we need to establish the existence of
\begin{equation}\label{eq:ind.diamond}
\uvec{\upsilon}_{h,\diamondsuit}\in \Xcurl[h,\diamondsuit]\quad\mbox{ such that }\quad\uCT\uvec{\upsilon}_{h,\diamondsuit}=\uvec{w}_T\qquad\forall T\in\Th[h,\diamondsuit].
\end{equation}
Above, for $\circ\in\{\star,\diamondsuit\}$, $\Xcurl[h,\circ]$ and $\Th[h,\circ]$ denote, respectively, the discrete curl space and set of elements associated to $\Mh[h,\circ]$.
In what follows, for any $T\in\Th$, we denote by $\Mh[T]\coloneq \{T\}\cup\FT\cup\ET\cup\VT$ the submesh associated with $T$.
\smallskip

\noindent{\bf Operation A: \emph{Adding a new element by gluing one of its faces}.} Let us call $T_\diamondsuit$ the element added to $\Mh[h,\star]$, and $F$ the face along which we glue it; let $T_\star$ be the element in $\Mh[h,\star]$ to which $T_\diamondsuit$ is glued. In the step-by-step procedure below, we construct an extension $\uvec{\upsilon}_{h,\diamondsuit}$ of $\uvec{\upsilon}_{h,\star}$ to $\Mh[h,\diamondsuit]=\Mh[h,\star]\cup\Mh[T_\diamondsuit]$ such that its values on $F$ viewed from $T_\diamondsuit$ and from $T_\star$ match (which ensures that $\uvec{\upsilon}_{h,\diamondsuit}\in\Xcurl[h,\diamondsuit]$), and such that $\uCT[T_\diamondsuit]\uvec{\upsilon}_{h,\diamondsuit}=\uvec{w}_{T_\diamondsuit}$. Since $\uvec{\upsilon}_{h,\diamondsuit}=\uvec{\upsilon}_{h,\star}$ on each $T\in\Th[h,\star]$, the inductive assumption \eqref{eq:ind.star} will then prove that \eqref{eq:ind.diamond} holds.

\begin{itemize}[leftmargin=1em]
\item \emph{Pre-image in $T_\diamondsuit$}. Since $\uvec{w}_{T_\diamondsuit}\in\Ker\DT[T_\diamondsuit]$, the local exactness of \cite[Theorem 17]{Di-Pietro.Droniou.ea:20} gives $\uvec{\zeta}_{T_\diamondsuit}\in\Xcurl[T_\diamondsuit]$ that satisfies $\uCT[T_\diamondsuit]\uvec{\zeta}_{T_\diamondsuit}=\uvec{w}_{T_\diamondsuit}$. 

\item \emph{The difference of the pre-images on $F$ is a gradient}. Restricting the two relations $\uCT[T_\star]\uvec{\upsilon}_{h,\star}=\uvec{w}_{T_\star}$ and $\uCT[T_\diamondsuit]\uvec{\zeta}_{T_\diamondsuit}=\uvec{w}_{T_\diamondsuit}$ to the face $F$ in common between $T_\diamondsuit$ and $T_\star$, we have $\CF\uvec{\zeta}_F=\CF\uvec{\upsilon}_{F,\star}$ (where $\uvec{\upsilon}_{F,\star}$ is the restriction to $F$ of $\uvec{\upsilon}_{h,\star}$). Hence, $\uvec{\zeta}_F-\uvec{\upsilon}_{F,\star}\in \Ker\CF=\Image\uvec{G}_F^k$ (see \cite[Theorem 8]{Di-Pietro.Droniou.ea:20}), where $\uvec{G}_F^k:\Xgrad[F]\to \Xcurl[F]$ is the restriction to $F$ of the global gradient $\uGh$ defined by \eqref{eq:uGh}. There exists thus $\underline{q}_{F,\diamondsuit}\in\Xgrad[F]$ such that $\uvec{\zeta}_F+\uvec{G}_F^k\underline{q}_{F,\diamondsuit}=\uvec{\upsilon}_{F,\star}$. 

\item \emph{Extension of $\underline{q}_{F,\diamondsuit}$ to $\Xgrad[\diamondsuit]$}. We extend $\underline{q}_{F,\diamondsuit}$ into $\underline{q}_{T_\diamondsuit}\in\Xgrad[T_\diamondsuit]$ the following way: letting $q_{\partial F,\diamondsuit}$ be the continuous piecewise polynomial function on $\partial F$ corresponding to the boundary values of $\underline{q}_{F,\diamondsuit}$ (see Remark \ref{rem:continuous.poly.skeleton}), we prolong $q_{\partial F,\diamondsuit}$ by continuity to $\mathcal E_{T_\diamondsuit}$ as a linear function along each edge $E_i$ (see the notations in Figure \ref{fig:adding_element}), from its value at the vertex $V_i$ to 0 at the other end of $E_i$, and then set it to zero on the remaining edges. We then set all face values (except on $F$) and the element value of $\underline{q}_{T_\diamondsuit}$ to zero. 

\item \emph{Conclusion}. With the above extension, the vector $\uvec{\upsilon}_{T_\diamondsuit}:=\uvec{\zeta}_{T_\diamondsuit}+\uvec{G}_{T_\diamondsuit}^k\underline{q}_{T_\diamondsuit}\in\Xcurl[T_\diamondsuit]$ satisfies $\uCT[T_\diamondsuit]\uvec{\upsilon}_{T_\diamondsuit}=\uvec{w}_{T_\diamondsuit}$ (because $\Image \uvec{G}_{T_\diamondsuit}^k\subset \Ker\uCT[T_\diamondsuit]$ by \cite[Theorem 17]{Di-Pietro.Droniou.ea:20}) and, by construction, the values of $\uvec{\upsilon}_{h,\star}$ and $\uvec{\upsilon}_{T_\diamondsuit}$ on $F$ coincide. This completes the construction of the extension $\uvec{\upsilon}_{h,\diamondsuit}$ of $\uvec{\upsilon}_{h,\star}$ in the case of Operation A.
\end{itemize}

\smallskip

\noindent{\bf Operation B: \emph{Gluing together two mesh faces, already glued together along a connected path of their edges}.}
No new element is added to the mesh, but two faces of neighbouring elements $T_\star$ and $T_\diamondsuit$ are glued together along one of their faces $F$. In this situation, the element $\uvec{\upsilon}_{h,\star}$ given by \eqref{eq:ind.star} has, on $F$, two values $\uvec{\upsilon}_{F,\star},\uvec{\upsilon}_{F,\diamondsuit}\in\Xcurl[F]$, viewed from $T_\star$ and $T_\diamondsuit$ respectively, which coincide only along the edges that are already glued together (e.g., those between the vertices $V_1,V_2,V_3,V_4$ in Figure \ref{fig:gluing_faces}). We have to find a modification $\uvec{\upsilon}_{h,\diamondsuit}$ of $\uvec{\upsilon}_{h,\star}$ that preserves the relation $\uCT\uvec{\upsilon}_{h,\diamondsuit}=\uvec{w}_T$ for all $T\in\Th[h,\star]=\Th[h,\diamondsuit]$, and is single-valued on $F$. This is done in the following procedure by modifying the vector $\uvec{\upsilon}_{F,\diamondsuit}$ so that it coincides with $\uvec{\upsilon}_{F,\star}$; this modification will, however, have repercussions on other values of $\uvec{\upsilon}_{h,\star}$, that need to be properly tracked.

\begin{itemize}[leftmargin=1em]
\item \emph{Notations for glued/non-glued edges and vertices of $F$}. Let $\EF^{\rm g}$ be the set of edges of $F$ already glued between $T_\star$ and $T_\diamondsuit$, and $\EF^{\rm ng}\coloneq\EF\setminus\EF^{\rm g}$ the edges that are not glued; similarly, $\VF^{\rm g}$ and $\VF^{\rm ng}\coloneq\VF\setminus\VF^{\rm g}$ respectively denote the vertices of $F$ that are glued and not glued between $T_\star$ and $T_\diamondsuit$. 

In Figure \ref{fig:gluing_faces}, $\EF^{\rm g}$ is made of $[V_1,V_2]$, $[V_2,V_3]$ and $[V_3,V_4]$, $\EF^{\rm ng}$ comprises $[V_4,V_5]$, $[V_5,V_6]$ and $[V_6,V_1]$, while $\VF^{\rm g}=\{V_1,V_2,V_3,V_4\}$ and $\VF^{\rm ng}=\{V_5,V_6\}$ (remember that there is only one face $F$, its representation as two faces in this figure is just a mnemotechnic way to remember that some functions can be double-valued on the face or some of its edges). 

\item \emph{The difference of pre-images on $F$ is the gradient of a vector that vanishes along the connected path}. As in Operation A, we have $\CF(\uvec{\upsilon}_{F,\star}-\uvec{\upsilon}_{F,\diamondsuit})=w_F-w_F=0$, and there exists thus $\underline{q}_{F,\diamondsuit}\in\Xgrad[F]$ such that $\uvec{\upsilon}_{F,\star}-\uvec{\upsilon}_{F,\diamondsuit}=\uvec{G}_{F}^k\underline{q}_{F,\diamondsuit}$. By definition \eqref{eq:GE} of the boundary gradient, the derivative of $q_{\partial F,\diamondsuit}$ on each edge $E\in\EF$ is $(\uvec{\upsilon}_{F,\star}-\uvec{\upsilon}_{F,\diamondsuit})_{|E}$, and therefore vanishes on the connected path of edges in $\EF^{\rm g}$. The function $q_{\partial F,\diamondsuit}$ is thus constant along this path and, since adding to $\underline{q}_{F,\diamondsuit}$ the interpolate in $\Xgrad[F]$ of a constant function does not change the relation $\uvec{\upsilon}_{F,\star}-\uvec{\upsilon}_{F,\diamondsuit}=\uvec{G}_{F}^k\underline{q}_{F,\diamondsuit}$ (due to \cite[Proposition 4.1]{Di-Pietro.Droniou.ea:20}), we can assume that $q_{\partial F,\diamondsuit}=0$ on the edges in $\EF^{\rm g}$.

\item \emph{Extension of $\underline{q}_{F,\diamondsuit}$}.
Still following ideas developed in Operation A above, the boundary function $q_{\partial F,\diamondsuit}$ is extended to all edges starting from a vertex in $\VF^{\rm ng}$: the extension is linear from the value of  $q_{\partial F,\diamondsuit}$ at the considered vertex to zero at the other edge endpoint (in Figure \ref{fig:gluing_faces}, this extends $q_{\partial F,\diamondsuit}$ to $E_a$, $E_b$, $E_c$, $E_d$ -- notice that some of these edges do not belong to $\Eh[T_\diamondsuit]$). None of these edges belongs to $\Eh[T_\star]$, since they originate from a vertex in $\VF^{\rm ng}$ at which $T_\diamondsuit$ and $T_\star$ are not connected.

Recalling that $q_{\partial F,\diamondsuit}=0$ at the vertices in $\VF^{\rm g}$, we can further extend this function by zero on all remaining edges (that is, edges that do not contain any vertex in $\VF^{\rm ng}$). This yields a continuous piecewise polynomial function on the complete edge skeleton of $\Mh[h,\star]$, that is single-valued on all edges except those in $\EF^{\rm ng}$, and that vanishes on all the edges of $T_\star$. Setting all face values (except $F$ when viewed from $T_\diamondsuit$) and  all element values to zero, we obtain $\underline{q}_{h,\diamondsuit}$ that is single-valued on the edge skeleton (except the edges in $\EF^{\rm ng}$), single-valued on the faces (except $F$), vanishes on $T_\star$, and satisfies $\underline{q}_{T,\diamondsuit}\in\Xgrad[T]$ for all $T\in\Th[h,\star]$. By definition of $\uGh$, the vector $\uGh\underline{q}_{h,\diamondsuit}$ can be defined element-wise, and is single-valued on all faces except $F$, and on all edges except those in $\EF^{\rm ng}$.

\item \emph{Conclusion}. Setting $\uvec{\upsilon}_{h,\diamondsuit}\coloneq\uvec{\upsilon}_{h,\star}-\uGh\underline{q}_{h,\diamondsuit}$, we obtain by construction an element that is single-valued on all faces, including $F$ (since $\underline{q}_{h,\diamondsuit}=0$ on $T_\star$, the addition of $\uGh\underline{q}_{h,\diamondsuit}$ does not modify $\uvec{\upsilon}_{F,\star}$, while it adjusts $\uvec{\upsilon}_{F,\diamondsuit}$ to match $\uvec{\upsilon}_{F,\star}$). Hence, $\uvec{\upsilon}_{h,\diamondsuit}\in\Xcurl[h,\diamondsuit]$. Moreover, since $\Image\uvec{G}_T^k\subset \Ker\uCT$ for all $T\in\Th[h,\diamondsuit]$, from \eqref{eq:ind.star} we deduce that \eqref{eq:ind.diamond} holds.
\end{itemize}
\end{proof}

\begin{remark}[Domain with voids]\label{rem:domain.hole}
Consider assembling, following similar steps as in Operations A and B above, a mesh for an open set enclosing a void, e.g., a one-layer mesh of a domain contained between two concentric (polyhedral approximations of) balls. At some stage, to close the layer, we would end up in the situation described in Figure \ref{fig:invalid_gluing} (in which the ball enclosed by the domain is below the part of the mesh depicted): all the mesh except one element, $T_\diamondsuit$, has been fully assembled and this last element has to be glued to the existing mesh. On the left of this picture, it has already been glued to the elements $T_1$ and $T_2$, which can be done using Operations A and B above. Then, applying another Operation B to glue it to $T_3$, we would end up in a situation described on the right of the picture: $T_\diamondsuit$ and $T_\star$ are then only glued along the two vertical edges of $F$, which do not form a connected path. The consequence is that, when attempting to apply Operation B as in the proof of Theorem \ref{thm:exactness} to finalise the gluing of $T_\diamondsuit$ and $T_\star$ along $F$, the function $q_{\partial F,\diamondsuit}$ could only be chosen to vanish on one of the vertical edges, and not necessarily the other; its extension to the edges skeleton would therefore have non-zero values on the edges of $T_\star$, and the resulting element $\uGh\underline{q}_{h,\diamondsuit}$ would modify the value of $\uvec{\upsilon}_{F,\star}$ on $T_\star$, thus preventing $\uvec{G}_F^k\underline{q}_{F,\diamondsuit}$ from properly gluing $\uvec{\upsilon}_{F,\star}$ and $\uvec{\upsilon}_{F,\diamondsuit}$.

\begin{figure}
  \centering
  \input{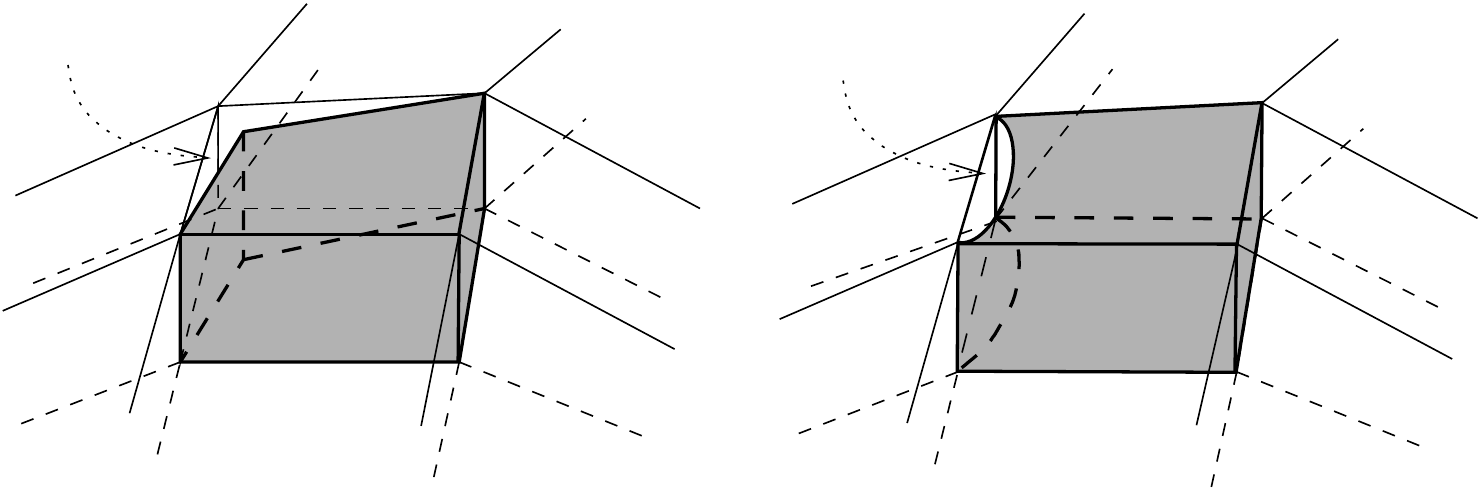_t}
  \caption{Example of gluing appearing when assembling a mesh of an open set enclosing a void (here, the open set is a layer between two concentric balls, the inner one being below in the figures above).
  \label{fig:invalid_gluing}
  }
\end{figure}
\end{remark}


\section{DDR-based discretisation}\label{sec:discretisation}

In this section, we formulate  the DDR-based discretisation of problem \eqref{eq:weak}.
The key ingredients are reconstructions of vector potentials and discrete $\Leb^2$-products on the spaces $\Xcurl[h]$ and $\Xdiv[h]$.
The vector potential reconstructions are obtained element-wise by mimicking the Stokes formula with the role of the exterior derivative played by the appropriate vector operator reconstruction.
The discrete $\Leb^2$-products consist of two terms, one in charge of consistency based on the vector potential reconstructions, and the other in charge of stability.
The latter is obtained by penalising in a least-square sense the difference between projections of the vector potential reconstruction and the polynomial functions in the space. We notice that the approach consisting in reconstructing potentials as functions over the domain and using them to design, through standard $\Leb^2$-products, the consistent contributions to the discrete $\Leb^2$-products, enables us to seamlessly take into account the physical parameter $\mu$ in the discretisation \eqref{eq:weak}. From hereon, we assume that the permeability is constant over each element, and we denote by $\mu_T\in [\underline{\mu},\overline{\mu}]$ its value in $T\in\Th$. This choice is made for simplicity of presentation, as accounting for a permeability that varies inside each element would raise no additional challenge.
The more general case of locally varying permeability is numerically demonstrated in Section \ref{sec:numerical.examples}.

\subsection{Discrete vector potential reconstructions and $\Leb^2$-products}\label{sec:discretisation:potentials.l2-products}

Let a mesh element $T\in\Th$ be fixed.
The vector potential reconstruction on $\Xcurl[T]$ is $\PcurlT:\Xcurl\to\Poly{k}(T)^3$ such that, for all $\uvec{\upsilon}_T\in\Xcurl$,
\begin{equation}\label{eq:pcurl}
  \PcurlT\uvec{\upsilon}_T\coloneq\hPcurlT\uvec{\upsilon}_T - \Rproj{k-1}{T}(\hPcurlT\uvec{\upsilon}_T) + \bvec{\upsilon}_{\cvec{R},T},
\end{equation}
where $\hPcurlT\uvec{\upsilon}_T$ satisfies, for all $(\bvec{v},\bvec{\tau})\in\Goly{k+1}(T)^\perp\times\Roly{k}(T)^\perp$,
\begin{equation}\label{eq:PcurlT}
  \int_T\hPcurlT\uvec{\upsilon}_T\cdot\left(
  \CURL\bvec{v} + \bvec{\tau}
  \right)
  = \int_T\CT\uvec{\upsilon}_T\cdot\bvec{v}
  -\sum_{F\in\FT}\omega_{TF}\int_F\trFt\uvec{\upsilon}_T\cdot(\bvec{v}\times\normal_F)
  + \int_T\bvec{\upsilon}_{\cvec{R},T}^\perp\cdot\bvec{\tau}.
\end{equation}
Based on this potential reconstruction, we define the discrete $\Leb^2$-product such that, for all $\uvec{\upsilon}_h,\uvec{\zeta}_h\in\Xcurl[h]$,
\begin{equation}\label{eq:def.inner.curl}
\begin{gathered}
  (\uvec{\upsilon}_h,\uvec{\zeta}_h)_{\mu,\CURL,h}\coloneq\sum_{T\in\Th} (\uvec{\upsilon}_T,\uvec{\zeta}_T)_{\mu,\CURL,T}
  \\
  \text{
  with
  $(\uvec{\upsilon}_T,\uvec{\zeta}_T)_{\mu,\CURL,T}\coloneq \int_T\mu_T\PcurlT\uvec{\upsilon}_T\cdot\PcurlT\uvec{\zeta}_T + \mu_T\mathrm{s}_{\CURL,T}(\uvec{\upsilon}_T,\uvec{\zeta}_T)$
  for all $T\in\Th$.
  }
\end{gathered}
\end{equation}
In the above expression, $\mathrm{s}_{\CURL,T}:\Xcurl[T]\times\Xcurl[T]\to\Real$ is a stabilisation bilinear form that can be taken such that
\[
\begin{aligned}
  \mathrm{s}_{\CURL,T}(\uvec{\upsilon}_T,\uvec{\zeta}_T)
  &\coloneq
  \sum_{F\in\FT} h_F\int_F (\Rproj{k-1}{F}\PcurlT\uvec{\upsilon}_T - \bvec{\upsilon}_{\cvec{R},F})\cdot(\Rproj{k-1}{F}\PcurlT\uvec{\zeta}_T - \bvec{\zeta}_{\cvec{R},F})
  \\
  &\quad
  +\sum_{F\in\FT} h_F\int_F (\ROproj{k}{F}\PcurlT\uvec{\upsilon}_T - \bvec{\upsilon}_{\cvec{R},F}^\perp)\cdot(\ROproj{k}{F}\PcurlT\uvec{\zeta}_T - \bvec{\zeta}_{\cvec{R},F}^\perp)
  \\
  &\quad
  + \sum_{E\in\ET} h_E^2\int_E (\PcurlT\uvec{\upsilon}_T\cdot\tangent_E - \upsilon_E) (\PcurlT\uvec{\zeta}_T\cdot\tangent_E - \zeta_E).
\end{aligned}
\]

Given $T\in\Th$, the vector potential reconstruction on $\Xdiv[T]$ is $\PdivT:\Xdiv\to\Poly{k}(T)^3$ such that for all $\uvec{w}_T\in\Xdiv$ it holds, for all $(q,\bvec{v})\in\Poly{0,k+1}(T)\times\Goly{k}(T)^\perp$,
\begin{equation}\label{eq:PdivT}
  \int_T\PdivT\uvec{w}_T\cdot(\GRAD q+\bvec{v})
  = -\int_T\DT\uvec{w}_T q
  + \sum_{F\in\FT}\omega_{TF}\int_F w_F q
  + \int_T\bvec{w}_{\cvec{G},T}^\perp\cdot\bvec{v}.
\end{equation}
Based on this potential reconstruction, we define the $\Leb^2$-product such that, for all $\uvec{w}_h,\uvec{v}_h\in\Xdiv[h]$,
\begin{subequations}\label{eq:l2-product:Xdivh.glob}
  \begin{gather} \label{eq:l2-product:Xdivh}
    (\uvec{w}_h,\uvec{v}_h)_{\DIV,h}\coloneq\sum_{T\in\Th}(\uvec{w}_T,\uvec{v}_T)_{\DIV,T}
    \\ \label{eq:l2-product:XdivT}
    \text{
      with
      $(\uvec{w}_T,\uvec{v}_T)_{\DIV,T}\coloneq
      \int_T\PdivT\uvec{w}_T\cdot\PdivT\uvec{v}_T + \mathrm{s}_{\DIV,T}(\uvec{w}_T,\uvec{v}_T)$
      for all $T\in\Th$.
    }
  \end{gather}
\end{subequations}
Above, $\mathrm{s}_{\DIV,T}:\Xdiv[T]\times\Xdiv[T]\to\Real$ is a stabilisation bilinear form that can be taken such that
\begin{equation}\label{eq:sdivT}
  \begin{aligned}
    \mathrm{s}_{\DIV,T}(\uvec{w}_T,\uvec{v}_T)
    &\coloneq
    \int_T (\Gproj{k-1}{T}\PdivT\uvec{w}_T - \bvec{w}_{\cvec{G},T})\cdot(\Gproj{k-1}{T}\PdivT\uvec{v}_T - \bvec{v}_{\cvec{G},T})
    \\
    &\quad
    + \sum_{F\in\FT} h_F\int_F (\PdivT\uvec{w}_T\cdot\normal_F - w_F)(\PdivT\uvec{v}_T\cdot\normal_F - v_F).
  \end{aligned}
\end{equation}
The following result establishes, for all $T\in\Th$, a link between the potential reconstruction $\PdivT$ applied to the restriction to $T$ of the discrete curl operator defined by \eqref{eq:uCh}, and the full element curl operator defined by \eqref{eq:CT}.
\begin{proposition}[Link between $\PdivT$ and $\CT$]
  For all $T\in\Th$, it holds
  \begin{equation}\label{eq:PdivT.uCT=CT}
    \PdivT(\uCT\uvec{\zeta}_T)=\CT\uvec{\zeta}_T\qquad\forall\uvec{\zeta}_T\in\Xcurl.
  \end{equation}
\end{proposition}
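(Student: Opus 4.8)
The plan is to verify \eqref{eq:PdivT.uCT=CT} by testing both sides against a basis of $\Poly{k}(T)^3$. Recall from \eqref{eq:PdivT} that $\PdivT\uvec{w}_T$ is characterised by its action against $\GRAD q + \bvec{v}$ for $(q,\bvec{v})\in\Poly{0,k+1}(T)\times\Goly{k}(T)^\perp$; since $\GRAD\Poly{0,k+1}(T)=\Goly{k}(T)$ and $\Poly{k}(T)^3=\Goly{k}(T)\oplus\Goly{k}(T)^\perp$, this is exactly a decomposition of the test space. So I would take $\uvec{w}_T=\uCT\uvec{\zeta}_T$, which by \eqref{eq:uCh} has components $\bvec{w}_{\cvec{G},T}=\Gproj{k-1}{T}\CT\uvec{\zeta}_T$, $\bvec{w}_{\cvec{G},T}^\perp=\GOproj{k}{T}\CT\uvec{\zeta}_T$, and $w_F=\CF\uvec{\zeta}_F$ for $F\in\FT$, and show that the right-hand side of \eqref{eq:PdivT} with this choice equals $\int_T\CT\uvec{\zeta}_T\cdot(\GRAD q+\bvec{v})$.

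The key computation splits according to the test function. First I would handle the $\bvec{v}\in\Goly{k}(T)^\perp$ part: here the right-hand side of \eqref{eq:PdivT} reduces to $\int_T\bvec{w}_{\cvec{G},T}^\perp\cdot\bvec{v}=\int_T\GOproj{k}{T}(\CT\uvec{\zeta}_T)\cdot\bvec{v}=\int_T\CT\uvec{\zeta}_T\cdot\bvec{v}$, the last step because $\bvec{v}\in\Goly{k}(T)^\perp$ and the projector is self-adjoint — this matches immediately. The substantive part is the $q\in\Poly{0,k+1}(T)$ piece: I need
\[
-\int_T\DT(\uCT\uvec{\zeta}_T)\,q + \sum_{F\in\FT}\omega_{TF}\int_F\CF\uvec{\zeta}_F\,q = \int_T\CT\uvec{\zeta}_T\cdot\GRAD q.
\]
For the first term, by the definition \eqref{eq:DT} of $\DT$ applied to $\uCT\uvec{\zeta}_T$ (testing against the constant-free polynomial $q$, noting $\DT$ is tested against $\Poly{k}(T)$ and $q\in\Poly{0,k+1}(T)\subset\Poly{k+1}(T)$ — wait, careful: $\DT$ tests against $\Poly{k}(T)$ only), I would instead recall that $\uCT\uvec{\zeta}_T\in\Ker\DT$ by the local exactness \cite[Theorem 17]{Di-Pietro.Droniou.ea:20} (this is exactly step 2.a in the proof of Theorem \ref{thm:exactness}), so $\DT(\uCT\uvec{\zeta}_T)=0$ and the first term drops. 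Then it remains to show $\sum_{F\in\FT}\omega_{TF}\int_F\CF\uvec{\zeta}_F\,q = \int_T\CT\uvec{\zeta}_T\cdot\GRAD q$ for all $q\in\Poly{0,k+1}(T)$ (and trivially for constants, both sides vanishing).

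To establish this last identity I would use the definition \eqref{eq:CT} of $\CT$ with $\bvec{\tau}=\GRAD q\in\Poly{k}(T)^3$: since $\CURL\GRAD q=\bvec{0}$, the volumetric term vanishes and $\int_T\CT\uvec{\zeta}_T\cdot\GRAD q=\sum_{F\in\FT}\omega_{TF}\int_F\trFt\uvec{\zeta}_F\cdot(\GRAD q\times\normal_F)$. On each face $F$, $\GRAD q\times\normal_F$ relates to the tangential surface gradient $\GRAD_F(q_{|F})$ rotated by $\rotation{-\nicefrac\pi2}$, i.e. to $\VROT_F(q_{|F})$ up to sign/orientation conventions, so $\int_F\trFt\uvec{\zeta}_F\cdot(\GRAD q\times\normal_F)=\pm\int_F\trFt\uvec{\zeta}_F\cdot\VROT_F(q_{|F})$; then I would invoke the definition \eqref{eq:trFt} of $\trFt$ with $r$ the average-free part of $q_{|F}$ and $\bvec{\tau}=\bvec{0}$ to rewrite this in terms of $\int_F\CF\uvec{\zeta}_F\,r + \sum_{E\in\EF}\omega_{FE}\int_E\zeta_E\,r$, and finally reconcile the edge contributions across the sum over $F\in\FT$ — each interior edge of $T$ is shared by two faces with opposite $\omega_{FE}$, and on $\partial T$ the combination telescopes. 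The main obstacle I anticipate is exactly this bookkeeping of orientations and the reduction from $\trFt$ back to $\CF$: one must carefully track the sign conventions $\omega_{TF}$, $\omega_{FE}$, the rotation $\rotation{-\nicefrac\pi2}$, and the fact that \eqref{eq:trFt} is posed modulo constants on $F$ (so only the average-free part of $q_{|F}$ is seen, which is harmless since $\CF$ tested against a constant reproduces, via \eqref{eq:CF}, the boundary-edge sum that also appears on the other side). Getting these consistent is the crux; the rest is routine. Alternatively, and perhaps more cleanly, one could observe that the combined quantity "$\PdivT\circ\uCT$" and "$\CT$" both satisfy the same defining relations \eqref{eq:PdivT} — it suffices to note that $\CT\uvec{\zeta}_T\in\Poly{k}(T)^3$ trivially satisfies the characterisation of $\PdivT(\uCT\uvec{\zeta}_T)$ once one checks the two bullet computations above — so that uniqueness of the solution to \eqref{eq:PdivT} (which holds because $\{\GRAD q+\bvec{v}\}$ spans $\Poly{k}(T)^3$) gives the claim.
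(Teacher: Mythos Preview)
Your approach is essentially the same as the paper's: split the test functions into $\Goly{k}(T)$ and $\Goly{k}(T)^\perp$, use $\DT(\uCT\uvec{\zeta}_T)=0$ from local exactness, and verify the two projections separately. The only difference is that for the key identity
\[
\sum_{F\in\FT}\omega_{TF}\int_F\CF\uvec{\zeta}_F\,q = \int_T\CT\uvec{\zeta}_T\cdot\GRAD q
\qquad\forall q\in\Poly{0,k+1}(T),
\]
the paper simply cites \cite[Eq.~(5.21)]{Di-Pietro.Droniou.ea:20}, whereas you re-derive it from \eqref{eq:CT} with $\bvec{\tau}=\GRAD q$ and \eqref{eq:trFt}. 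Your route is correct, but two of your bookkeeping remarks are slightly off: first, a polyhedron $T$ has no ``interior edges'' --- the cancellation you need is that each $E\in\ET$ belongs to exactly two faces $F,F'\in\FT$ and $\omega_{TF}\omega_{FE}+\omega_{TF'}\omega_{F'E}=0$ (the combinatorial $\partial\partial=0$), so the $\omega_{TF}$ factor must be carried through; second, with the paper's conventions one has $\GRAD q\times\normal_F=-\VROT_F(q_{|F})$ exactly (no ambiguity), and since \eqref{eq:trFt} already encodes \eqref{eq:CF}, the edge terms and the constant part of $q_{|F}$ take care of themselves once the signs are tracked. These are cosmetic fixes; the argument goes through.
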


\begin{proof}
  Let a mesh element $T\in\Th$ be fixed.
  Writing \eqref{eq:PdivT} for $\uvec{w}_T=\uCT\uvec{\zeta}_T$ and $\bvec{v}=\bvec{0}$, we infer that it holds, for all $q\in\Poly{0,k+1}(T)$,
  \[
  \int_T\PdivT(\uCT\uvec{\zeta}_T)\cdot\GRAD q
  = -\int_T\cancel{\DT(\uCT\uvec{\zeta}_T)} q
  + \sum_{F\in\FT}\omega_{TF}\int_F\CF\uvec{\zeta}_Tq
  = \int_T\CT\uvec{\zeta}_T\cdot\GRAD q,
  \]
  where we have used the fact that $\Ker\DT=\Image\uCT$ (cf.\ \cite[Theorem 17]{Di-Pietro.Droniou.ea:20}) in the cancellation, and \cite[Eq. (5.21)]{Di-Pietro.Droniou.ea:20} to conclude.
  Hence, $\Gproj{k}{T}\big(\PdivT(\uCT\uvec{\zeta}_T)\big)=\Gproj{k}{T}\big(\CT\uvec{\zeta}_T\big)$.
  On the other hand, \eqref{eq:PdivT} with $q=0$ and $\bvec{v}$ spanning $\Goly{k}(T)^\perp$ implies $\GOproj{k}{T}\big(\PdivT(\uCT\uvec{\zeta}_T)\big)=\GOproj{k}{T}\big(\CT\uvec{\zeta}_T\big)$.
  Combining these facts with the orthogonal decomposition $\Poly{k}(T)^3=\Goly{k}(T)\oplus\Goly{k}(T)^\perp$ yields \eqref{eq:PdivT.uCT=CT} and concludes the proof.
\end{proof}

\subsection{Discrete problem}

Define the discrete bilinear forms $a_h:\Xcurl[h]\times\Xcurl[h]\to\Real$, $b_h:\Xcurl[h]\times\Xdiv[h]\to\Real$, and $c_h:\Xdiv[h]\times\Xdiv[h]\to\Real$ such that, for all $\uvec{\upsilon}_h,\uvec{\zeta}_h\in\Xcurl[h]$ and all $\uvec{w}_h,\uvec{v}_h\in\Xdiv[h]$,
\[
  \mathrm{a}_h(\uvec{\upsilon}_h,\uvec{\zeta}_h)\coloneq (\uvec{\upsilon}_h,\uvec{\zeta}_h)_{\mu,\CURL,h},\quad
  \mathrm{b}_h(\uvec{\zeta}_h,\uvec{v}_h)\coloneq (\uCh\uvec{\zeta}_h,\uvec{v}_h)_{\DIV,h},\quad
  \mathrm{c}_h(\uvec{w}_h,\uvec{v}_h)\coloneq\int_\Omega\Dh\uvec{w}_h~\Dh\uvec{v}_h.
\]
The discrete problem reads:
Find $\uvec{H}_h\in\Xcurl[h]$ and $\uvec{A}_h\in\Xdiv[h]$ such that
\begin{equation}\label{eq:discrete}
  \begin{alignedat}{2}
    \mathrm{a}_h(\uvec{H}_h,\uvec{\zeta}_h) - \mathrm{b}_h(\uvec{\zeta}_h,\uvec{A}_h) &= -\sum_{F\in\Fhb}\int_F\bvec{g}\cdot\trFt\uvec{\zeta}_h
    &\qquad&\forall\uvec{\zeta}_h\in\Xcurl[h],
    \\
    \mathrm{b}_h(\uvec{H}_h,\uvec{v}_h) + \mathrm{c}_h(\uvec{A}_h,\uvec{v}_h) &= \int_\Omega\bvec{J}\cdot\Pdivh\uvec{v}_h
    &\qquad&\forall\uvec{v}_h\in\Xdiv[h].
  \end{alignedat}
\end{equation}

\begin{remark}[Characterisation of {$\mathrm{b}_h$}]
  Expanding, in the definition of $\mathrm{b}_h$, the discrete $\Leb^2$-inner product on $\Xdiv[h]$ according to \eqref{eq:l2-product:Xdivh.glob} and recalling \eqref{eq:PdivT.uCT=CT}, we obtain the following equivalent expression, which can be used for its practical implementation:
  For all $(\uvec{\zeta}_h,\uvec{w}_h)\in\Xcurl[h]\times\Xdiv[h]$,
  \[
    \mathrm{b}_h(\uvec{\zeta}_h,\uvec{w}_h)
    =\sum_{T\in\Th}\left[
    \int_T\CT\uvec{\zeta}_T\cdot\PdivT\uvec{w}_T
    + \sum_{F\in\FT} h_F\int_F (\CT\uvec{\zeta}_T\cdot\normal_F - \CF\uvec{\zeta}_F)(\PdivT\uvec{w}_T\cdot\normal_F - w_F)
    \right].
  \]
\end{remark}

\begin{remark}[Comparison with Finite Elements]\label{rem:comparison}
  Even on standard meshes, the proposed method does not coincide, in general, with the Finite Element approximation of degree $k$.
  Bearing in mind \cite[Table 2]{Di-Pietro.Droniou.ea:20}, the number of degrees of freedom is slightly higher on tetrahedra and significantly smaller on hexahedra.
\end{remark}

\subsection{Well-posedness analysis}\label{sec:discrete:well-posedness}

The discrete problem \eqref{eq:discrete} can be recast as: Find $(\uvec{H}_h,\uvec{A}_h)\in\Xcurl[h]\times\Xdiv[h]$ such that
\[
\mathcal A_h((\uvec{H}_h,\uvec{A}_h),(\uvec{\zeta}_h,\uvec{v}_h))=\mathcal L_h(\uvec{\zeta}_h,\uvec{v}_h)\qquad\forall (\uvec{\zeta}_h,\uvec{v}_h)\in 
\Xcurl[h]\times\Xdiv[h],
\]
where the bilinear form $\mathcal A_h: (\Xcurl[h]\times\Xdiv[h])^2\to \Real$ and the linear form $\mathcal L_h:\Xcurl[h]\times\Xdiv[h]\to\Real$
are such that, for all $((\uvec{\upsilon}_h,\uvec{w}_h),(\uvec{\zeta}_h,\uvec{v}_h))\in (\Xcurl[h]\times\Xdiv[h])^2$,
\begin{equation}\label{eq:def.Ah}
 \mathcal A_h((\uvec{\upsilon}_h,\uvec{w}_h),(\uvec{\zeta}_h,\uvec{v}_h))\coloneq
\mathrm{a}_h(\uvec{\upsilon}_h,\uvec{\zeta}_h) - \mathrm{b}_h(\uvec{\zeta}_h,\uvec{w}_h)
+\mathrm{b}_h(\uvec{\upsilon}_h,\uvec{v}_h) + \mathrm{c}_h(\uvec{w}_h,\uvec{v}_h)
\end{equation}
and
\[
\mathcal L_h(\uvec{\zeta}_h,\uvec{v}_h)\coloneq
-\sum_{F\in\Fhb}\int_F\bvec{g}\cdot\trFt\uvec{\zeta}_h+\int_\Omega\bvec{J}\cdot\Pdivh\uvec{v}_h.
\]

In the following, we consider a regular sequence $(\Mh)_{h>0}$ of polyhedral meshes, meaning that $(\Th,\Fh)_{h>0}$ matches the requirements of \cite[Definition 1.9]{Di-Pietro.Droniou:20}. The well-posedness analysis requires a discrete Poincar\'e inequality for $\uCh$.
This inequality can be established under the following assumption, which only requires a proper control the averages of the edge unknowns. Proposition \ref{prop:poincare.curl.simply.connected} in the appendix shows that this assumption is satisfied if $\Omega$ is simply connected. Hereafter, we denote by $\norm[\mu,\CURL,h]{{\cdot}}$ and $\norm[\DIV,h]{{\cdot}}$ the $\Leb^2$-like norms respectively associated with the inner products $(\cdot,\cdot)_{\mu,\CURL,h}$ and $(\cdot,\cdot)_{\DIV,h}$.

\begin{assumption}[Poincar\'e inequality for edge averages]\label{assum:poincare.edges}
There is an inner product on $\Xcurl[h]$ whose norm is equivalent (uniformly in $h$) to $\norm[\mu,\CURL,h]{{\cdot}}$ and such that, letting $(\Ker\uCh)^\perp$ be the orthogonal complement of $\Ker\uCh$ in $\Xcurl[h]$ for this inner product, there exists $\alpha>0$ (not depending on $h$) satisfying
\begin{equation}\label{eq:poincare.edges}
\left(\sum_{E\in\Eh}h_E^2|E|(\overline{\upsilon}_E)^2\right)^{\frac12}\le \alpha\norm[\DIV,h]{\uCh\uvec{\upsilon}_h}\qquad\forall \uvec{\upsilon}_h\in(\Ker\uCh)^\perp,
\end{equation}
where $|E|$ denotes the length of $E\in\Eh$ and $\overline{\upsilon}_E$ is the average value on $E$ of $\upsilon_E$.
\end{assumption}

The uniform inf--sup property of $\mathcal A_h$ is established in the following discrete versions of the $\Hcurl{\Omega}$ and $\Hdiv{\Omega}$ norms:
\[
\norm[\mu,\CURL,1,h]{\uvec{\zeta}_h}\coloneq\left( \norm[\mu,\CURL,h]{\uvec{\zeta}_h}^2 + \norm[\DIV,h]{\uCh\uvec{\zeta}_h}^2\right)^{\frac12}\qquad\forall\uvec{\zeta}_h\in\Xcurl[h],
\]
and
\[
\norm[\DIV,1,h]{\uvec{v}_h}\coloneq\left( \norm[\DIV,h]{\uvec{v}_h}^2 + \norm[\Omega]{\Dh\uvec{v}_h}^2\right)^{\frac12}\qquad\forall\uvec{v}_h\in\Xdiv[h].
\]
Here and in the following, $\norm[X]{{\cdot}}$ denotes the $L^2$-norm on $X=\Omega$, $X=\partial\Omega$, or $X\in\Mh$.

\begin{theorem}[Inf-sup condition for $\mathcal{A}_h$]\label{thm:stability}
  Let $\Omega\subset\Real^3$ be an open connected polyhedral domain that does not enclose any void (i.e., its second Betti number is zero), and let $(\Mh)_{h>0}$ be a regular polyhedral mesh sequence. Then, under Assumption \ref{assum:poincare.edges}, there exists $\beta>0$ depending only on $\Omega$, the mesh regularity parameter, $\mu$, and $\alpha$, but not depending on $h$, such that, for all $(\uvec{\upsilon}_h,\uvec{w}_h)\in\Xcurl[h]\times\Xdiv[h]$,
  \begin{equation}\label{eq:inf.sup.Ah}
    \sup_{(\uvec{\zeta}_h,\uvec{v}_h)\in \Xcurl[h]\times\Xdiv[h]\setminus\{0\}}\frac{\mathcal A_h((\uvec{\upsilon}_h,\uvec{w}_h),(\uvec{\zeta}_h,\uvec{v}_h))}{\norm[\mu,\CURL,1,h]{\uvec{\zeta}_h}+\norm[\DIV,1,h]{\uvec{v}_h}}\ge \beta \left(\norm[\mu,\CURL,1,h]{\uvec{\upsilon}_h}+\norm[\DIV,1,h]{\uvec{w}_h}\right).
  \end{equation}
\end{theorem}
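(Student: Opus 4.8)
The plan is to establish \eqref{eq:inf.sup.Ah} via the standard saddle-point strategy for mixed formulations that fit the abstract framework of \cite[Section 4.5]{Arnold:18}: namely, show that $\mathcal{A}_h$ satisfies a Brezzi-type condition built from (i) coercivity of $\mathrm{a}_h$ on the kernel of $\mathrm{b}_h$, (ii) an inf--sup condition for $\mathrm{b}_h$, and (iii) coercivity of $\mathrm{c}_h$ on the kernel of $\mathrm{b}_h$ in the second argument; then combine these into the single inf--sup estimate. The key structural fact that makes this work is the exactness of the DDR sequence (Theorem \ref{thm:exactness}): $\Image\uCh=\Ker\Dh$ and $\Image\Dh=\Poly{k}(\Th)$, together with the uniform discrete Poincar\'e inequalities for $\uCh$ (using Assumption \ref{assum:poincare.edges}) and for $\Dh$.

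First I would record the two uniform discrete Poincar\'e inequalities. For the divergence: since $\Image\Dh=\Poly{k}(\Th)$ by \eqref{eq:Im.div=Poly.Th}, for any $\uvec{v}_h\in(\Ker\Dh)^\perp$ (orthogonal in $\norm[\DIV,h]{{\cdot}}$) one has $\norm[\DIV,h]{\uvec{v}_h}\lesssim\norm[\Omega]{\Dh\uvec{v}_h}$; this is proved by a standard compactness/discrete-functional-analysis argument, or directly by exhibiting a bounded right inverse of $\Dh$ using the interpolate of an $\Hil^1$ lifting as in Step 1 of the proof of Theorem \ref{thm:exactness}. For the curl: using exactness $\Image\uCh=\Ker\Dh$ and Assumption \ref{assum:poincare.edges}, one shows $\norm[\mu,\CURL,h]{\uvec{\upsilon}_h}\lesssim\norm[\DIV,h]{\uCh\uvec{\upsilon}_h}$ for $\uvec{\upsilon}_h\in(\Ker\uCh)^\perp$; the edge-average control \eqref{eq:poincare.edges} is precisely the missing ingredient that, combined with the local stabilisation terms in $\mathrm{s}_{\CURL,T}$ and the boundedness of the potential reconstructions, upgrades to full control of the $\norm[\mu,\CURL,h]{{\cdot}}$-norm. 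These two inequalities, whose proofs are deferred to Appendix \ref{sec:inf-sup}, are the analytical heart of the argument.

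Next I would assemble the Brezzi conditions. Coercivity of $\mathrm{a}_h$ on $\Ker\mathrm{b}_h$: if $\mathrm{b}_h(\uvec{\upsilon}_h,\uvec{v}_h)=0$ for all $\uvec{v}_h$, then $(\uCh\uvec{\upsilon}_h,\uvec{v}_h)_{\DIV,h}=0$ for all $\uvec{v}_h$, hence $\uCh\uvec{\upsilon}_h=0$, i.e. $\uvec{\upsilon}_h\in\Ker\uCh$; but then $\norm[\mu,\CURL,1,h]{\uvec{\upsilon}_h}^2=\norm[\mu,\CURL,h]{\uvec{\upsilon}_h}^2=\mathrm{a}_h(\uvec{\upsilon}_h,\uvec{\upsilon}_h)$, which is exactly the coercivity needed. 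Inf--sup for $\mathrm{b}_h$: given $\uvec{w}_h\in\Xdiv[h]$, decompose $\uvec{w}_h=\uvec{w}_h^0+\uvec{w}_h^\perp$ along $\Ker\Dh\oplus(\Ker\Dh)^\perp$; by exactness, $\uvec{w}_h^0=\uCh\uvec{\upsilon}_h$ for some $\uvec{\upsilon}_h\in(\Ker\uCh)^\perp$ with $\norm[\mu,\CURL,1,h]{\uvec{\upsilon}_h}\lesssim\norm[\DIV,h]{\uvec{w}_h^0}$ by the curl Poincar\'e inequality, and testing $\mathrm{b}_h(\uvec{\upsilon}_h,\uvec{w}_h)=(\uvec{w}_h^0,\uvec{w}_h)_{\DIV,h}=\norm[\DIV,h]{\uvec{w}_h^0}^2$ handles the kernel component of $\uvec{w}_h$; the orthogonal component $\uvec{w}_h^\perp$ is controlled by $\norm[\Omega]{\Dh\uvec{w}_h}$ via the divergence Poincar\'e inequality and picked up through $\mathrm{c}_h$. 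Then the standard algebraic manipulation — testing $\mathcal{A}_h$ against a suitable combination $(\uvec{\zeta}_h,\uvec{v}_h)=(\uvec{\upsilon}_h,\uvec{w}_h)+\delta(\text{corrector for }\uvec{w}_h,\text{corrector for }\uvec{\upsilon}_h)$ with $\delta$ small and using the skew-symmetric coupling of the two $\mathrm{b}_h$ terms in \eqref{eq:def.Ah} — yields \eqref{eq:inf.sup.Ah}.

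The main obstacle is the curl Poincar\'e inequality $\norm[\mu,\CURL,h]{\uvec{\upsilon}_h}\lesssim\norm[\DIV,h]{\uCh\uvec{\upsilon}_h}$ on $(\Ker\uCh)^\perp$: unlike the divergence case, one cannot simply invoke a bounded right inverse constructed from a continuous lifting, because the DDR curl space carries $\cvec{R}$-type polynomial components on elements and faces as well as edge unknowns, and the discrete $\Leb^2$-product mixes potential reconstructions with stabilisation terms in a way that requires careful tracking. The strategy — carried out in Appendix \ref{sec:inf-sup} — is to first control the element- and face-based components by the norms of the potential reconstructions (using \eqref{eq:pcurl}--\eqref{eq:PcurlT} and boundedness estimates), then control the potential reconstructions themselves by $\norm[\DIV,h]{\uCh\uvec{\upsilon}_h}$ plus the edge averages via a discrete analogue of the continuous estimate $\norm[\Omega]{\bvec{v}}\lesssim\norm[\Omega]{\CURL\bvec{v}}$ on $(\Ker\CURL)^\perp$ (valid since the second Betti number is zero), and finally invoke Assumption \ref{assum:poincare.edges} to absorb the edge-average contribution. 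Verifying that all constants in this chain are uniform in $h$ — which hinges on the mesh regularity assumption and the optimal approximation/boundedness properties of the DDR reconstructions from \cite{Di-Pietro.Droniou.ea:20} — is where the technical bulk of the proof lies.
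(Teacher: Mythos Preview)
Your proposal is correct and takes essentially the same approach as the paper: both arguments hinge on the exactness relations of Theorem \ref{thm:exactness} together with the two uniform Poincar\'e inequalities for $\Dh$ and $\uCh$ (the latter via Assumption \ref{assum:poincare.edges}), and both control $\norm[\DIV,h]{\uvec{w}_h}$ by decomposing $\uvec{w}_h$ along $\Ker\Dh\oplus(\Ker\Dh)^\perp$ and testing against a $\uCh$-preimage of the kernel part. The only difference is packaging: you invoke the abstract Brezzi-type framework of \cite[Section 4.5]{Arnold:18} and a single combined test function with a small parameter $\delta$, whereas the paper works directly with three explicit test choices $(\uvec{\upsilon}_h,\uvec{w}_h)$, $(\uvec{0},\uCh\uvec{\upsilon}_h)$, and $(\uvec{\zeta}_h,\uvec{0})$ with $\uCh\uvec{\zeta}_h=-\uvec{w}_h^\star$, and combines the resulting estimates.
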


\begin{proof}
See Section \ref{sec:proof.stability}.
\end{proof}

\begin{corollary}[Well-posedness of the discrete problem]\label{cor:well-posedness}
Under the assumptions of Theorem \ref{thm:stability}, there is a unique solution $(\uvec{H}_h,\uvec{A}_h)\in\Xcurl[h]\times\Xdiv[h]$ to \eqref{eq:discrete} and there exists $C>0$ not depending on $h$ such that
\begin{equation}\label{eq:well.posed}
  \norm[\mu,\CURL,1,h]{\uvec{H}_h}+\norm[\DIV,1,h]{\uvec{A}_h}\le C \left(
  \norm[\partial\Omega]{\bvec{g}}+\norm[\Omega]{\bvec{J}}
  \right).
\end{equation}
\end{corollary}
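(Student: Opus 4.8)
The plan is to deduce the corollary from Theorem~\ref{thm:stability} by the standard argument for square, inf--sup stable linear systems. First I would recast \eqref{eq:discrete} as the single variational equation $\mathcal A_h((\uvec{H}_h,\uvec{A}_h),(\uvec{\zeta}_h,\uvec{v}_h))=\mathcal L_h(\uvec{\zeta}_h,\uvec{v}_h)$ for all $(\uvec{\zeta}_h,\uvec{v}_h)\in\Xcurl[h]\times\Xdiv[h]$, that is, as a square linear system on the finite-dimensional space $\Xcurl[h]\times\Xdiv[h]$. Since $(\cdot,\cdot)_{\mu,\CURL,h}$ and $(\cdot,\cdot)_{\DIV,h}$ are inner products, the map $(\uvec{\zeta}_h,\uvec{v}_h)\mapsto\norm[\mu,\CURL,1,h]{\uvec{\zeta}_h}+\norm[\DIV,1,h]{\uvec{v}_h}$ is a genuine norm, so the inf--sup inequality \eqref{eq:inf.sup.Ah} implies that $\mathcal A_h((\uvec{\upsilon}_h,\uvec{w}_h),\cdot)\equiv 0$ forces $(\uvec{\upsilon}_h,\uvec{w}_h)=(\bvec{0},\bvec{0})$; hence the linear operator associated with $\mathcal A_h$ is injective and, the system being square, bijective, which gives existence and uniqueness of $(\uvec{H}_h,\uvec{A}_h)$.

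For the a priori bound \eqref{eq:well.posed}, I would take $(\uvec{\upsilon}_h,\uvec{w}_h)=(\uvec{H}_h,\uvec{A}_h)$ in \eqref{eq:inf.sup.Ah}, so that
\[
\beta\bigl(\norm[\mu,\CURL,1,h]{\uvec{H}_h}+\norm[\DIV,1,h]{\uvec{A}_h}\bigr)\le\sup_{(\uvec{\zeta}_h,\uvec{v}_h)\neq 0}\frac{\mathcal L_h(\uvec{\zeta}_h,\uvec{v}_h)}{\norm[\mu,\CURL,1,h]{\uvec{\zeta}_h}+\norm[\DIV,1,h]{\uvec{v}_h}},
\]
and it remains to bound $\mathcal L_h$ by the data. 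The volumetric term is immediate: by Cauchy--Schwarz $\bigl|\int_\Omega\bvec{J}\cdot\Pdivh\uvec{v}_h\bigr|\le\norm[\Omega]{\bvec{J}}\,\norm[\Omega]{\Pdivh\uvec{v}_h}$, and since by \eqref{eq:l2-product:Xdivh.glob} the product $(\cdot,\cdot)_{\DIV,h}$ equals $\sum_{T\in\Th}\norm[T]{\PdivT\cdot}^2$ plus the nonnegative stabilisation $\sum_{T\in\Th}\mathrm{s}_{\DIV,T}$, we get $\norm[\Omega]{\Pdivh\uvec{v}_h}\le\norm[\DIV,h]{\uvec{v}_h}\le\norm[\DIV,1,h]{\uvec{v}_h}$. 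Likewise, by \eqref{eq:def.inner.curl}, $\norm[T]{\PcurlT\uvec{\zeta}_T}^2\le\underline{\mu}^{-1}(\uvec{\zeta}_T,\uvec{\zeta}_T)_{\mu,\CURL,T}$, so the element-wise potential reconstructions are controlled by $\norm[\mu,\CURL,h]{\uvec{\zeta}_h}$.

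The boundary term is where the real work lies: what is needed is a discrete trace estimate $\bigl(\sum_{F\in\Fhb}\norm[F]{\trFt\uvec{\zeta}_h}^2\bigr)^{1/2}\le C\,\norm[\mu,\CURL,1,h]{\uvec{\zeta}_h}$ with $C$ independent of $h$, which by Cauchy--Schwarz yields $\bigl|\sum_{F\in\Fhb}\int_F\bvec{g}\cdot\trFt\uvec{\zeta}_h\bigr|\le C\norm[\partial\Omega]{\bvec{g}}\,\norm[\mu,\CURL,1,h]{\uvec{\zeta}_h}$. I would prove it by relating, on the (unique) element $T$ having $F$ as a boundary face, the tangential potential $\trFt\uvec{\zeta}_F$ to the tangential trace of $\PcurlT\uvec{\zeta}_T$ and the face stabilisation data through the defining relations \eqref{eq:trFt}--\eqref{eq:CT} and the properties of the face operators from \cite{Di-Pietro.Droniou.ea:20}, then applying a discrete trace inequality on polynomials together with mesh regularity; the negative power of $h_F$ produced on this single boundary layer must be absorbed using the $\norm[\DIV,h]{\uCh\uvec{\zeta}_h}$ contribution of $\norm[\mu,\CURL,1,h]{\cdot}$, which is the discrete analogue of the continuous bound $\norm[H^{-1/2}(\partial\Omega)]{\bvec{\zeta}\times\bvec{n}}\lesssim\norm{\bvec{\zeta}}_{\Hcurl{\Omega}}$. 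Combining the two estimates gives $\mathcal L_h(\uvec{\zeta}_h,\uvec{v}_h)\le C\bigl(\norm[\partial\Omega]{\bvec{g}}+\norm[\Omega]{\bvec{J}}\bigr)\bigl(\norm[\mu,\CURL,1,h]{\uvec{\zeta}_h}+\norm[\DIV,1,h]{\uvec{v}_h}\bigr)$, so the supremum above is bounded by $C\bigl(\norm[\partial\Omega]{\bvec{g}}+\norm[\Omega]{\bvec{J}}\bigr)$ and \eqref{eq:well.posed} follows with constant $C/\beta$. I expect this trace bound for $\trFt$ on boundary faces to be the main obstacle: unlike the volume term it is not a one-line consequence of the definition of the discrete $\Leb^2$-products, and obtaining a constant that is uniform in $h$ (rather than blowing up like $h^{-1/2}$) forces the use of the full energy norm, i.e.\ of the discrete curl, and careful bookkeeping with the mesh regularity assumption.
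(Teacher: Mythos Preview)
Your overall framework matches the paper's exactly: recast \eqref{eq:discrete} as $\mathcal A_h=\mathcal L_h$, invoke the inf--sup condition of Theorem~\ref{thm:stability} for existence and uniqueness (the paper cites \cite[Proposition~A.4]{Di-Pietro.Droniou:20} for this abstract step), and bound the dual norm of $\mathcal L_h$. Your treatment of the volumetric term via $\norm[\Omega]{\Pdivh\uvec{v}_h}\le\norm[\DIV,h]{\uvec{v}_h}\le\norm[\DIV,1,h]{\uvec{v}_h}$ is exactly what the paper does.

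The difference is the boundary term, and here you make it much harder than the paper does. The paper's argument is essentially one line: apply the local bound $\norm[F]{\trFt\uvec{\zeta}_F}\lesssim\tnorm[\CURL,F]{\uvec{\zeta}_F}$ from Proposition~\ref{prop:bd.curl} (equation~\eqref{eq:bd.CF.trFt}), then invoke the norm equivalence $\norm[\mu,\CURL,h]{{\cdot}}\approx\tnorm[\CURL,h]{{\cdot}}$ of~\eqref{eq:equiv.Xcurl}. No discrete curl contribution, no tangential trace of $\PcurlT$, no absorption of a stray $h^{-1/2}$. Your proposed route---relate $\trFt$ to the trace of $\PcurlT$ through \eqref{eq:trFt}--\eqref{eq:CT}, apply polynomial trace inequalities, then absorb the loss via $\norm[\DIV,h]{\uCh\uvec{\zeta}_h}$---is not what the paper does, and you yourself flag it as the main obstacle; in the paper's account this step is immediate from already-proved local estimates.

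That said, your scaling instinct is not unfounded: the local bound \eqref{eq:bd.CF.trFt} gives $\sum_{F\in\Fhb}\norm[F]{\trFt\uvec{\zeta}_F}^2\lesssim\sum_{F\in\Fhb}\tnorm[\CURL,F]{\uvec{\zeta}_F}^2$, whereas $\tnorm[\CURL,h]{\uvec{\zeta}_h}^2$ only controls $\sum_{F}h_F\tnorm[\CURL,F]{\uvec{\zeta}_F}^2$, so a naive comparison loses a factor $h^{-1}$. The paper does not comment on this and treats the passage as direct; note that the continuous pairing $\int_{\partial\Omega}\bvec{g}\cdot\bvec{\zeta}$ with $\bvec{g}\in L^2(\partial\Omega)^3$ already sits at the edge of what the $\Hcurl{\Omega}$ trace allows, so this is a point where the paper's proof is deliberately terse rather than a place where extra machinery of the kind you sketch is expected.
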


\begin{proof}
  Using the boundedness of $\trFt$ stated in \eqref{eq:bd.CF.trFt}, the norm equivalence \eqref{eq:equiv.Xcurl}, and the definition of $\norm[\DIV,1,h]{{\cdot}}$ (which implies $\sum_{T\in\Th}\norm[T]{\PdivT\uvec{v}_T}^2\le \norm[\DIV,1,h]{\uvec{v}_h}^2$), we obtain an upper bound of the dual norm of $\mathcal L_h$ (for the norm $\Xcurl[h]\times\Xdiv[h]\ni(\uvec{\zeta}_h,\uvec{v}_h)\mapsto\norm[\mu,\CURL,1,h]{\uvec{\zeta}_h}+\norm[\DIV,1,h]{\uvec{v}_h}\in\Real$) in terms of the right-hand side of \eqref{eq:well.posed}. The conclusion then follows from Theorem \ref{thm:stability} and \cite[Proposition A.4]{Di-Pietro.Droniou:20}.
\end{proof}

\subsection{Numerical assessment of the convergence rate}\label{sec:numerical.examples}

  The goal of this section is to numerically assess the convergence rate of the method. Our focus is thus on academic test cases for which an analytical solution is available.
  More physical tests and an assessment of the performance are postponed to a future, engineering-oriented paper.

\subsubsection{Setting}

The DDR method \eqref{eq:discrete} has been implemented within the \texttt{HArDCore3D} C++ framework (see \url{https://github.com/jdroniou/HArDCore}), using linear algebra facilities from the \texttt{Eigen3} library (see \url{http://eigen.tuxfamily.org}) and, for the resolution of the sparse linear systems, \texttt{Intel MKL PARDISO} (see \url{https://software.intel.com/en-us/mkl}) library.

In order to numerically assess the convergence properties of the DDR method, we consider the following manufactured exact solution on the unit cube $\Omega=(0,1)^3$:
\[
\bvec{H}(\bvec{x})=\frac{3\pi}{\mu(\bvec{x})}\begin{pmatrix}
\sin(\pi x_1)\cos(\pi x_2)\sin(\pi x_3) \\
0 \\
-\cos(\pi x_1)\cos(\pi x_2)\sin(\pi x_3)
\end{pmatrix},\quad
\bvec{A}(\bvec{x})=\begin{pmatrix}
\cos(\pi x_1)\sin(\pi x_2)\sin(\pi x_3) \\
-2\sin(\pi x_1)\cos(\pi x_2)\sin(\pi x_3) \\
\sin(\pi x_1)\sin(\pi x_2)\cos(\pi x_3)
\end{pmatrix},
\]
with expressions for the boundary datum $\bvec{g}$ and the current density $\bvec{J}$ inferred from the expression of $\bvec{A}$ and \eqref{eq:strong:ampere}, respectively.
We consider two cases:
\[
\text{$\mu=1$ (unit permeability)\quad and\quad $\mu(\bvec{x})=1+x_1+x_2+x_3$ (variable permeability).}
\]

Define the following interpolate of the exact solution:
\[
\begin{gathered}
  \hat{\uvec{H}}_h\coloneq\big(
  (\Rproj{k-1}{T}\bvec{H},\ROproj{k}{T}\bvec{H})_{T\in\Th},
  (\Rproj{k-1}{F}\bvec{H}_{{\rm t},F},\ROproj{k}{F}\bvec{H}_{{\rm t},F})_{F\in\Fh},
  (\lproj{k}{E}(\bvec{H}\cdot\tangent_E))_{E\in\Eh}
  \big)\in\Xcurl[h],
  \\
  \hat{\uvec{A}}_h\coloneq\big(
  (\Gproj{k-1}{T}\bvec{A},\GOproj{k}{T}\bvec{A})_{T\in\Th},
  (\lproj{k}{T}(\bvec{A}\cdot\normal_F))_{F\in\Fh}
  \big)\in\Xdiv[h],
\end{gathered}
\]
where, for all $F\in\Fh$, $\bvec{H}_{{\rm t},F}\coloneq\normal_F\times(\bvec{H}_{|F}\times\normal_F)$ denotes the tangential component of $\bvec{H}$ over $F$.
We consider the energy norm of the error defined as
\[
\norm[\rm en]{(\uvec{H}_h-\hat{\uvec{H}}_h,\uvec{A}_h-\hat{\uvec{A}}_h)}
\coloneq\left[
    \mathrm{a}_h(\uvec{H}_h-\hat{\uvec{H}}_h,\uvec{H}_h-\hat{\uvec{H}}_h)
    + \mathrm{c}_h(\uvec{A}_h-\hat{\uvec{A}}_h,\uvec{A}_h-\hat{\uvec{A}}_h)
    \right]^{\frac12}.
\]
Numerical approximations of the solution are computed on Cartesian, tetrahedral, and Voronoi mesh sequences (see Figure \ref{fig:meshes}) and polynomial degrees $k$ ranging from 0 to 3.
\begin{figure}\centering
  \begin{minipage}{0.275\textwidth}
    \includegraphics[width=0.90\textwidth]{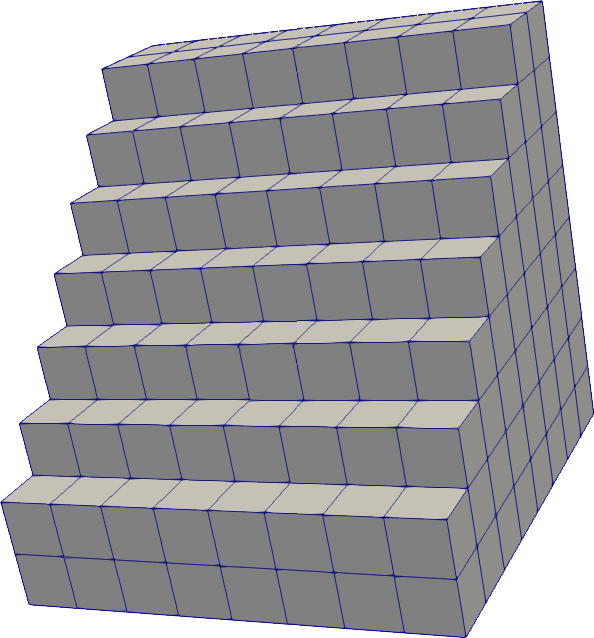}
    \subcaption{``Cubic-Cells'' mesh}
  \end{minipage}
  \hspace{0.25cm}
  \begin{minipage}{0.275\textwidth}
    \includegraphics[width=0.90\textwidth]{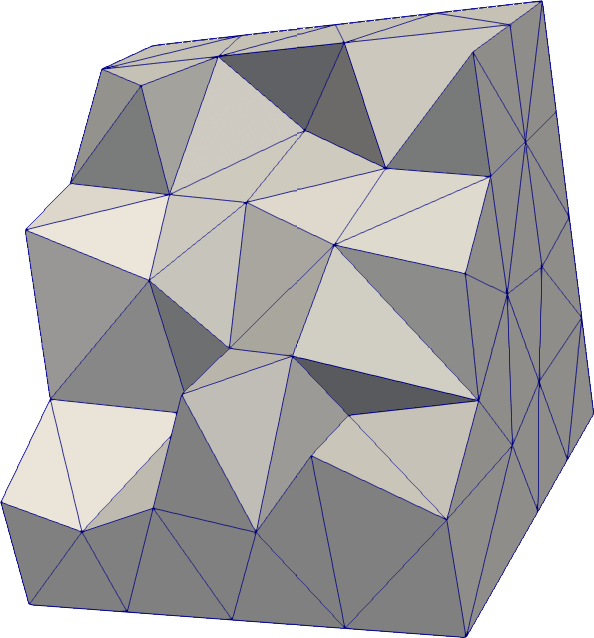}
    \subcaption{``Tetgen-Cube-0'' mesh}
  \end{minipage}
  \vspace{0.25cm}\\
  \begin{minipage}{0.275\textwidth}
    \includegraphics[width=0.90\textwidth]{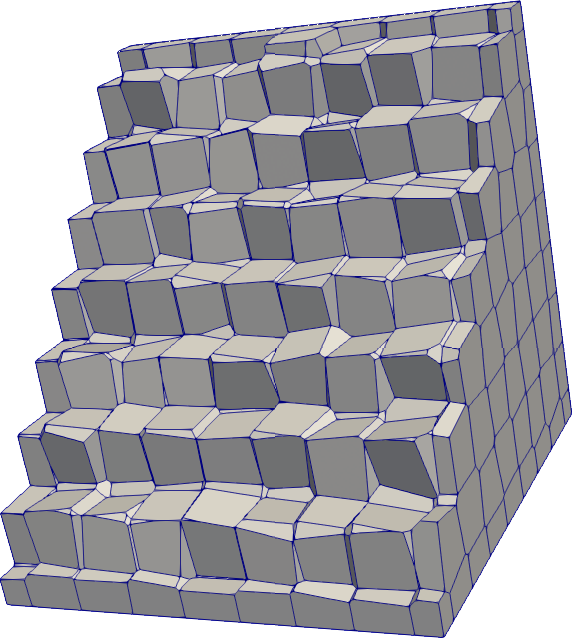}
    \subcaption{``Voro-small-0'' mesh}
  \end{minipage}
  \hspace{0.25cm}
  \begin{minipage}{0.275\textwidth}
    \includegraphics[width=0.90\textwidth]{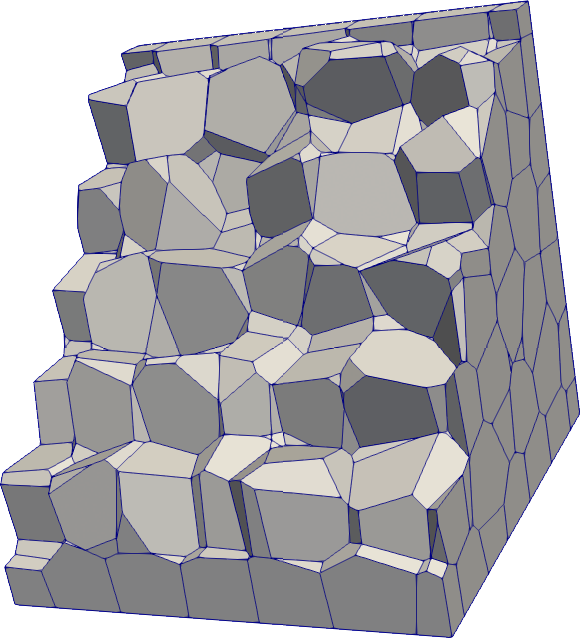}
    \subcaption{``Voro-small-1'' mesh}
  \end{minipage}
  \caption{Members of the refined mesh families used in the numerical example of Section \ref{sec:numerical.examples}.\label{fig:meshes}}
\end{figure}
%


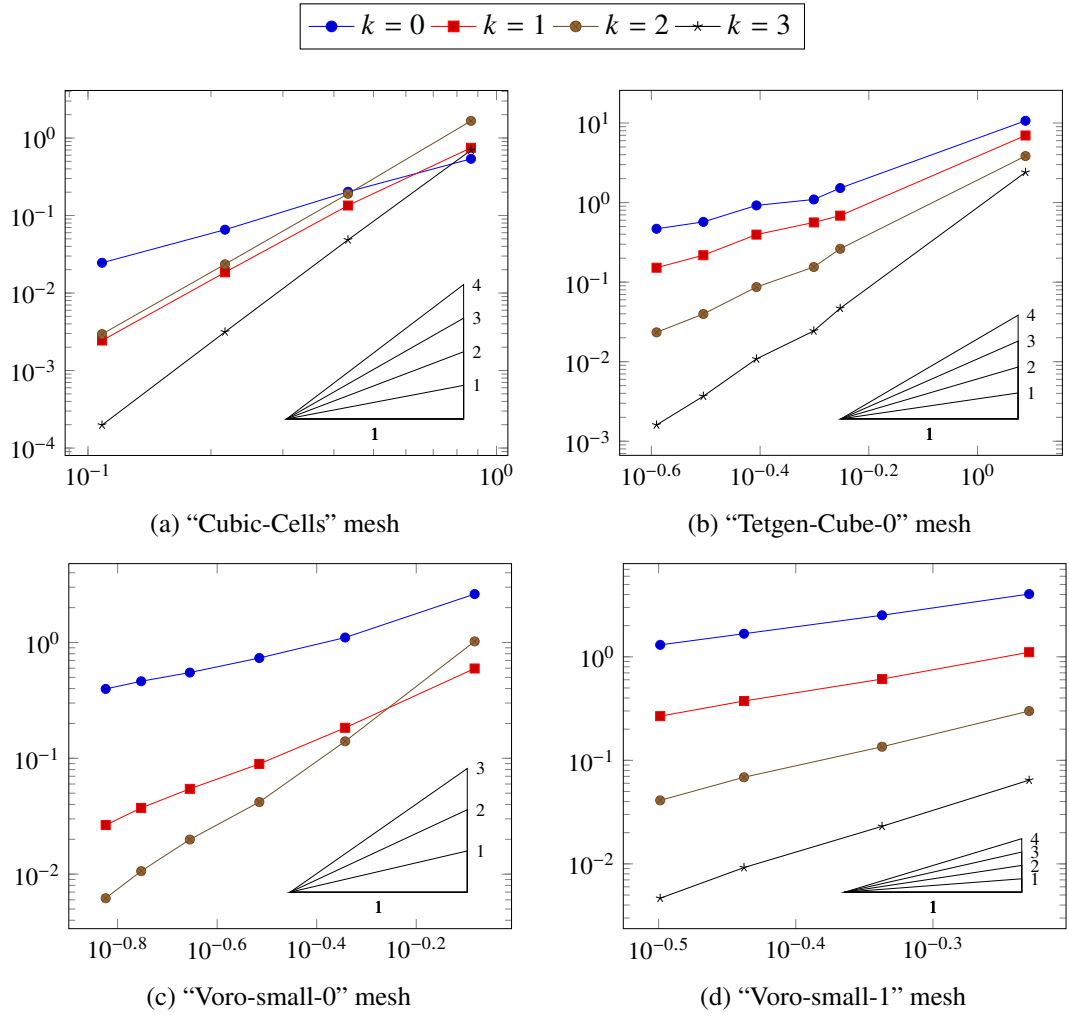
\begin{figure}\centering
  \ref{conv.cubic-cells}
  \vspace{0.50cm}\\
  \begin{minipage}{0.45\textwidth}
    \begin{tikzpicture}[scale=0.85]
      \begin{loglogaxis}[legend columns=-1, legend to name=conv.cubic-cells]        
        \addplot table[x=MeshSize,y=EnergyError] {outputs/unit-permeability/Cubic-Cells_0/data.dat};
        \logLogSlopeTriangle{0.90}{0.4}{0.1}{1}{black};
        \addplot table[x=MeshSize,y=EnergyError] {outputs/unit-permeability/Cubic-Cells_1/data.dat};
        \logLogSlopeTriangle{0.90}{0.4}{0.1}{2}{black};
        \addplot table[x=MeshSize,y=EnergyError] {outputs/unit-permeability/Cubic-Cells_2/data_rates.dat};
        \logLogSlopeTriangle{0.90}{0.4}{0.1}{3}{black};
        \addplot table[x=MeshSize,y=EnergyError] {outputs/unit-permeability/Cubic-Cells_3/data_rates.dat};
        \logLogSlopeTriangle{0.90}{0.4}{0.1}{4}{black};        
        \legend{$k=0$, $k=1$, $k=2$, $k=3$};
      \end{loglogaxis}            
    \end{tikzpicture}
    \subcaption{``Cubic-Cells'' mesh}
  \end{minipage}
  \begin{minipage}{0.45\textwidth}
    \begin{tikzpicture}[scale=0.85]
      \begin{loglogaxis}
        \addplot table[x=MeshSize,y=EnergyError] {outputs/unit-permeability/Tetgen-Cube-0_0/data.dat};
        \logLogSlopeTriangle{0.90}{0.4}{0.1}{1}{black};
        \addplot table[x=MeshSize,y=EnergyError] {outputs/unit-permeability/Tetgen-Cube-0_1/data.dat};
        \logLogSlopeTriangle{0.90}{0.4}{0.1}{2}{black};
        \addplot table[x=MeshSize,y=EnergyError] {outputs/unit-permeability/Tetgen-Cube-0_2/data_rates.dat};
        \logLogSlopeTriangle{0.90}{0.4}{0.1}{3}{black};
        \addplot table[x=MeshSize,y=EnergyError] {outputs/unit-permeability/Tetgen-Cube-0_3/data_rates.dat};
        \logLogSlopeTriangle{0.90}{0.4}{0.1}{4}{black};
      \end{loglogaxis}            
    \end{tikzpicture}
    \subcaption{``Tetgen-Cube-0'' mesh}
  \end{minipage}
  \vspace{0.25cm}\\
  \begin{minipage}{0.45\textwidth}
    \begin{tikzpicture}[scale=0.85]
      \begin{loglogaxis}
        \addplot table[x=MeshSize,y=EnergyError] {outputs/unit-permeability/Voro-small-0_0/data.dat};
        \logLogSlopeTriangle{0.90}{0.4}{0.1}{1}{black};
        \addplot table[x=MeshSize,y=EnergyError] {outputs/unit-permeability/Voro-small-0_1/data.dat};
        \logLogSlopeTriangle{0.90}{0.4}{0.1}{2}{black};
        \addplot table[x=MeshSize,y=EnergyError] {outputs/unit-permeability/Voro-small-0_2/data_rates.dat};
        \logLogSlopeTriangle{0.90}{0.4}{0.1}{3}{black};
      \end{loglogaxis}            
    \end{tikzpicture}
    \subcaption{``Voro-small-0'' mesh}
  \end{minipage}
  \begin{minipage}{0.45\textwidth}
    \begin{tikzpicture}[scale=0.85]
      \begin{loglogaxis}
        \addplot table[x=MeshSize,y=EnergyError] {outputs/unit-permeability/Voro-small-1_0/data.dat};
        \logLogSlopeTriangle{0.90}{0.4}{0.1}{1}{black};
        \addplot table[x=MeshSize,y=EnergyError] {outputs/unit-permeability/Voro-small-1_1/data.dat};
        \logLogSlopeTriangle{0.90}{0.4}{0.1}{2}{black};
        \addplot table[x=MeshSize,y=EnergyError] {outputs/unit-permeability/Voro-small-1_2/data.dat};
        \logLogSlopeTriangle{0.90}{0.4}{0.1}{3}{black};
        \addplot table[x=MeshSize,y=EnergyError] {outputs/unit-permeability/Voro-small-1_3/data_rates.dat};
        \logLogSlopeTriangle{0.90}{0.4}{0.1}{4}{black};
      \end{loglogaxis}            
    \end{tikzpicture}
    \subcaption{``Voro-small-1'' mesh}
  \end{minipage}
  \caption{Energy error $\norm[\rm en]{(\uvec{H}_h-\hat{\uvec{H}}_h,\uvec{A}_h-\hat{\uvec{A}}_h)}$ versus mesh size $h$ for the numerical example of Section \ref{sec:numerical.examples} (unit permeability).\label{fig:conv}}
\end{figure}

Starting with the unit permeability test case, we display in Figures \ref{fig:conv} the error versus the mesh size $h$.
The observed convergence rate is of $h^{k+1}$, with a slight degradation on the ``Voronoi-small-0'' mesh sequence, which can be ascribed to the fact that the regularity parameter increases upon refinement in this case (see the discussion in \cite[Section 5.1.8.2]{Di-Pietro.Droniou:20} for an assessment of the regularity of this mesh sequence, and its numerical impact in the context of HHO schemes).
A full theoretical justification of the fact that the scheme converges with order $k+1$ is postponed to a future work.


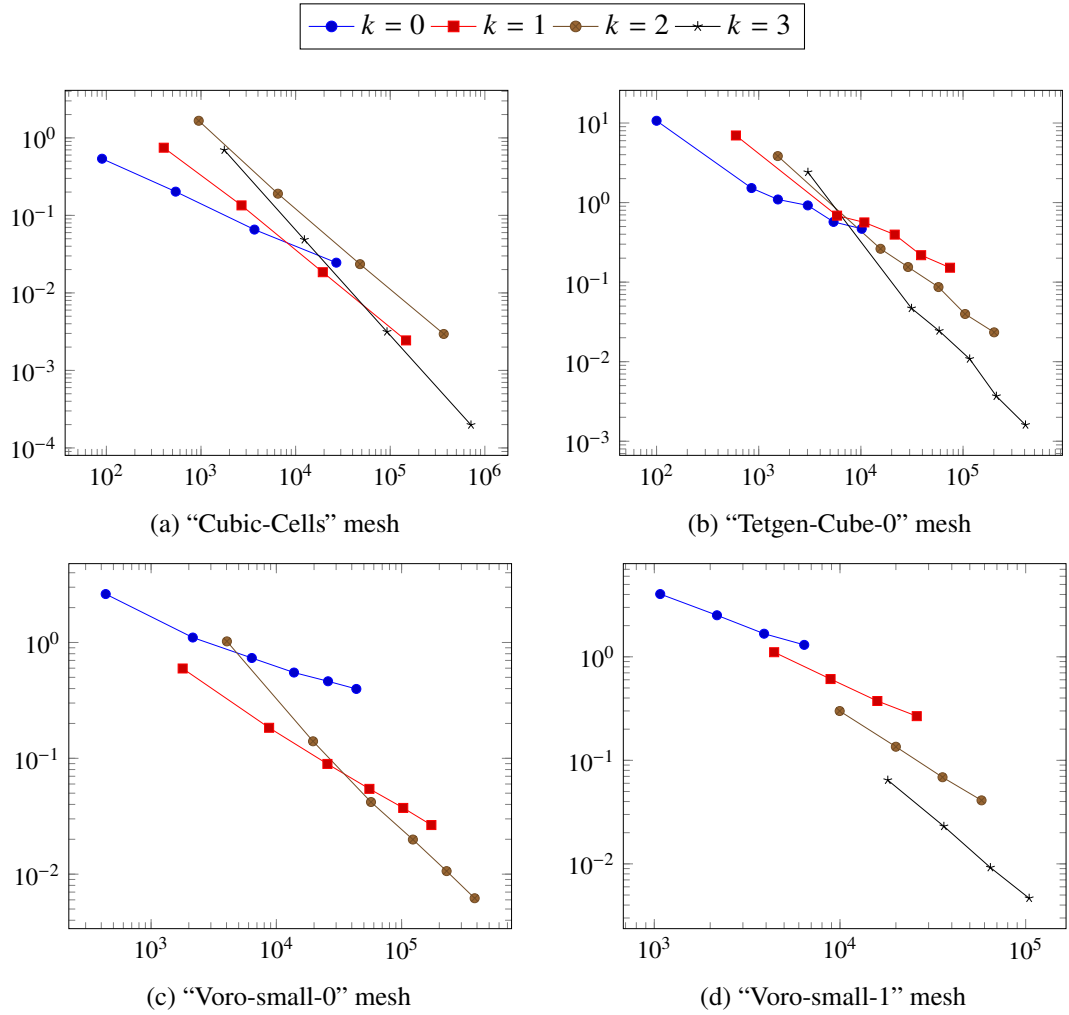
\begin{figure}\centering
  \ref{conv.cubic-cells.dofs}
  \vspace{0.50cm}\\
  \begin{minipage}{0.45\textwidth}
    \begin{tikzpicture}[scale=0.85]
      \begin{loglogaxis}[legend columns=-1, legend to name=conv.cubic-cells.dofs]        
        \addplot table[x expr=\thisrow{DimXCurl}+\thisrow{DimXDiv}
          ,y=EnergyError] {outputs/unit-permeability/Cubic-Cells_0/data.dat};
        \addplot table[x expr=\thisrow{DimXCurl}+\thisrow{DimXDiv}
          ,y=EnergyError] {outputs/unit-permeability/Cubic-Cells_1/data.dat};
        \addplot table[x expr=\thisrow{DimXCurl}+\thisrow{DimXDiv}
          ,y=EnergyError] {outputs/unit-permeability/Cubic-Cells_2/data_rates.dat};
        \addplot table[x expr=\thisrow{DimXCurl}+\thisrow{DimXDiv}
          ,y=EnergyError] {outputs/unit-permeability/Cubic-Cells_3/data_rates.dat};
        \legend{$k=0$, $k=1$, $k=2$, $k=3$};
      \end{loglogaxis}            
    \end{tikzpicture}
    \subcaption{``Cubic-Cells'' mesh}
  \end{minipage}
  \begin{minipage}{0.45\textwidth}
    \begin{tikzpicture}[scale=0.85]
      \begin{loglogaxis}
        \addplot table[x expr=\thisrow{DimXCurl}+\thisrow{DimXDiv}
          ,y=EnergyError] {outputs/unit-permeability/Tetgen-Cube-0_0/data.dat};
        \addplot table[x expr=\thisrow{DimXCurl}+\thisrow{DimXDiv}
          ,y=EnergyError] {outputs/unit-permeability/Tetgen-Cube-0_1/data.dat};
        \addplot table[x expr=\thisrow{DimXCurl}+\thisrow{DimXDiv}
          ,y=EnergyError] {outputs/unit-permeability/Tetgen-Cube-0_2/data_rates.dat};
        \addplot table[x expr=\thisrow{DimXCurl}+\thisrow{DimXDiv}
          ,y=EnergyError] {outputs/unit-permeability/Tetgen-Cube-0_3/data_rates.dat};
      \end{loglogaxis}            
    \end{tikzpicture}
    \subcaption{``Tetgen-Cube-0'' mesh}
  \end{minipage}
  \vspace{0.25cm}\\
  \begin{minipage}{0.45\textwidth}
    \begin{tikzpicture}[scale=0.85]
      \begin{loglogaxis}
        \addplot table[x expr=\thisrow{DimXCurl}+\thisrow{DimXDiv}
          ,y=EnergyError] {outputs/unit-permeability/Voro-small-0_0/data.dat};
        \addplot table[x expr=\thisrow{DimXCurl}+\thisrow{DimXDiv}
          ,y=EnergyError] {outputs/unit-permeability/Voro-small-0_1/data.dat};
        \addplot table[x expr=\thisrow{DimXCurl}+\thisrow{DimXDiv}
          ,y=EnergyError] {outputs/unit-permeability/Voro-small-0_2/data_rates.dat};
      \end{loglogaxis}            
    \end{tikzpicture}
    \subcaption{``Voro-small-0'' mesh}
  \end{minipage}
  \begin{minipage}{0.45\textwidth}
    \begin{tikzpicture}[scale=0.85]
      \begin{loglogaxis}
        \addplot table[x expr=\thisrow{DimXCurl}+\thisrow{DimXDiv}
          ,y=EnergyError] {outputs/unit-permeability/Voro-small-1_0/data.dat};
        \addplot table[x expr=\thisrow{DimXCurl}+\thisrow{DimXDiv}
          ,y=EnergyError] {outputs/unit-permeability/Voro-small-1_1/data.dat};
        \addplot table[x expr=\thisrow{DimXCurl}+\thisrow{DimXDiv}
          ,y=EnergyError] {outputs/unit-permeability/Voro-small-1_2/data.dat};
        \addplot table[x expr=\thisrow{DimXCurl}+\thisrow{DimXDiv}
          ,y=EnergyError] {outputs/unit-permeability/Voro-small-1_3/data_rates.dat};
      \end{loglogaxis}            
    \end{tikzpicture}
    \subcaption{``Voro-small-1'' mesh}
  \end{minipage}
  \caption{Energy error $\norm[\rm en]{(\uvec{H}_h-\hat{\uvec{H}}_h,\uvec{A}_h-\hat{\uvec{A}}_h)}$ versus number of degrees of freedom for the numerical example of Section \ref{sec:numerical.examples} (unit permeability).\label{fig:conv.dofs}}
\end{figure}

To assess the impact of the degree $k$, we plot in Figure \ref{fig:conv.dofs} the energy norm of the error as a function of the total number of degrees of freedom.
For all the considered mesh families, the convergence rate increases as expected with the polynomial degree.
For the standard Cartesian and tetrahedral meshes, a trade-off is present between the mesh size and the approximation degree.
Specifically, on the Cartesian mesh family, the choice $k=0$ is advantageous for less than $\approx2\cdot10^3$ degrees of freedom, while on the tetrahedral mesh family this threshold is $\approx7\cdot10^3$ degrees of freedom.
We notice however that, on generic polyhedral meshes such as those obtained by Voronoi tessellation, increasing the polynomial degree appears to always be advantageous. Complex geometries can be more efficiently meshed using generic polyhedral elements (which result in fewer elements than meshes of tetrahedra, for example); the tests presented here justify the practical interest of developing and using arbitrary-order methods on generic polyhedral meshes.
Finally, it is worth noticing that these results can be further improved resorting to static condensation to eliminate the element (and, possibly, face) unknowns by the local computation of a Schur complement, which we have however not done in the current implementation.


\begin{figure}\centering
  \ref{conv.cubic-cells:var.perm}
  \vspace{0.50cm}\\
  \begin{minipage}{0.45\textwidth}
    \begin{tikzpicture}[scale=0.85]
      \begin{loglogaxis}[legend columns=-1, legend to name=conv.cubic-cells:var.perm]        
        \addplot table[x=MeshSize,y=EnergyError] {outputs/variable-permeability/Cubic-Cells_0/data_rates.dat};
        \logLogSlopeTriangle{0.90}{0.4}{0.1}{1}{black};
        \addplot table[x=MeshSize,y=EnergyError] {outputs/variable-permeability/Cubic-Cells_1/data_rates.dat};
        \logLogSlopeTriangle{0.90}{0.4}{0.1}{2}{black};
        \addplot table[x=MeshSize,y=EnergyError] {outputs/variable-permeability/Cubic-Cells_2/data_rates.dat};
        \logLogSlopeTriangle{0.90}{0.4}{0.1}{3}{black};
        \addplot table[x=MeshSize,y=EnergyError] {outputs/variable-permeability/Cubic-Cells_3/data_rates.dat};
        \logLogSlopeTriangle{0.90}{0.4}{0.1}{4}{black};        
        \legend{$k=0$, $k=1$, $k=2$, $k=3$};
      \end{loglogaxis}            
    \end{tikzpicture}
    \subcaption{``Cubic-Cells'' mesh}
  \end{minipage}
  \begin{minipage}{0.45\textwidth}
    \begin{tikzpicture}[scale=0.85]
      \begin{loglogaxis}
        \addplot table[x=MeshSize,y=EnergyError] {outputs/variable-permeability/Tetgen-Cube-0_0/data_rates.dat};
        \logLogSlopeTriangle{0.90}{0.4}{0.1}{1}{black};
        \addplot table[x=MeshSize,y=EnergyError] {outputs/variable-permeability/Tetgen-Cube-0_1/data_rates.dat};
        \logLogSlopeTriangle{0.90}{0.4}{0.1}{2}{black};
        \addplot table[x=MeshSize,y=EnergyError] {outputs/variable-permeability/Tetgen-Cube-0_2/data_rates.dat};
        \logLogSlopeTriangle{0.90}{0.4}{0.1}{3}{black};
        \addplot table[x=MeshSize,y=EnergyError] {outputs/variable-permeability/Tetgen-Cube-0_3/data_rates.dat};
        \logLogSlopeTriangle{0.90}{0.4}{0.1}{4}{black};
      \end{loglogaxis}            
    \end{tikzpicture}
    \subcaption{``Tetgen-Cube-0'' mesh}
  \end{minipage}
  \vspace{0.25cm}\\
  \begin{minipage}{0.45\textwidth}
    \begin{tikzpicture}[scale=0.85]
      \begin{loglogaxis}
        \addplot table[x=MeshSize,y=EnergyError] {outputs/variable-permeability/Voro-small-0_0/data_rates.dat};
        \logLogSlopeTriangle{0.90}{0.4}{0.1}{1}{black};
        \addplot table[x=MeshSize,y=EnergyError] {outputs/variable-permeability/Voro-small-0_1/data_rates.dat};
        \logLogSlopeTriangle{0.90}{0.4}{0.1}{2}{black};
        \addplot table[x=MeshSize,y=EnergyError] {outputs/variable-permeability/Voro-small-0_2/data_rates.dat};
        \logLogSlopeTriangle{0.90}{0.4}{0.1}{3}{black};
      \end{loglogaxis}            
    \end{tikzpicture}
    \subcaption{``Voro-small-0'' mesh}
  \end{minipage}
  \begin{minipage}{0.45\textwidth}
    \begin{tikzpicture}[scale=0.85]
      \begin{loglogaxis}
        \addplot table[x=MeshSize,y=EnergyError] {outputs/variable-permeability/Voro-small-1_0/data_rates.dat};
        \logLogSlopeTriangle{0.90}{0.4}{0.1}{1}{black};
        \addplot table[x=MeshSize,y=EnergyError] {outputs/variable-permeability/Voro-small-1_1/data_rates.dat};
        \logLogSlopeTriangle{0.90}{0.4}{0.1}{2}{black};
        \addplot table[x=MeshSize,y=EnergyError] {outputs/variable-permeability/Voro-small-1_2/data_rates.dat};
        \logLogSlopeTriangle{0.90}{0.4}{0.1}{3}{black};
        \addplot table[x=MeshSize,y=EnergyError] {outputs/variable-permeability/Voro-small-1_3/data_rates.dat};
        \logLogSlopeTriangle{0.90}{0.4}{0.1}{4}{black};
      \end{loglogaxis}            
    \end{tikzpicture}
    \subcaption{``Voro-small-1'' mesh}
  \end{minipage}
  \caption{Energy error $\norm[\rm en]{(\uvec{H}_h-\hat{\uvec{H}}_h,\uvec{A}_h-\hat{\uvec{A}}_h)}$ versus mesh size $h$ for the numerical example of Section \ref{sec:numerical.examples} (variable permeability).\label{fig:conv:var.perm}}
\end{figure}
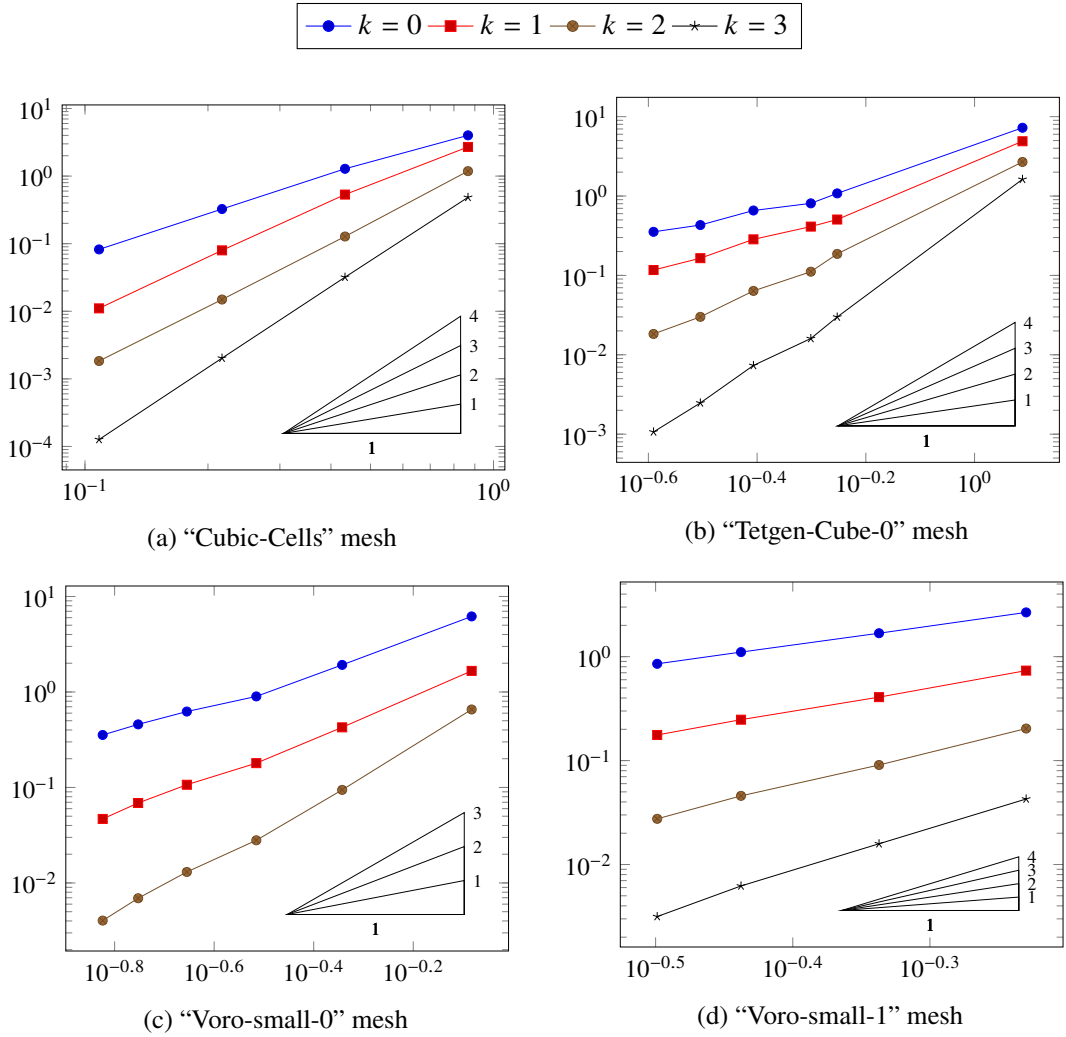

  Moving to the variable-permeability case, the results collected in Figure \ref{fig:conv:var.perm} show essentially the same behaviour as for the unit permeability case in terms of error versus mesh size (compare with Figure \ref{fig:conv}).
  Having allowed the permeability to vary inside each element, we had to increase the degree of exactness for quadrature rules inside each element in the computation of the $\mu$-weighted discrete $L^2$-product in $\Xcurl[h]$ in order to retain optimal convergence rates.


\section{Implementation}\label{sec:implementation}

In this section, we discuss the implementation of the DDR-based method \eqref{eq:discrete}.
We start with the identification of suitable bases for the local polynomial spaces introduced in Section \ref{sec:setting:polynomial.spaces}, then move to the implementation of the discrete vector operators defined in Section \ref{sec:sequence} along with the vector potentials and discrete $\Leb^2$-products of Section \ref{sec:discretisation:potentials.l2-products}.
In what follows, we use the C++ convention that numbering starts from 0.
Vectors and matrices are denoted with simple and bold sans-serif font, respectively.
Intervals of integers are denoted using double brackets so that, e.g., for any $n,m\in\Natural$ with $n<m$, $\llbracket n,m\llbracket$ denotes the set of integers greater or equal than $n$ and strictly smaller than $m$.

Note that the principles described here are very close to those used in the implementation of other polytopal methods; in particular, the reader will find many similarities with the implementation of the Hybrid High-Order method for the Poisson problem described in \cite[Appendix B]{Di-Pietro.Droniou:20}.

\subsection{Bases for the local polynomial spaces}

Let a mesh element $T\in\Th$ and an integer $\ell\ge 0$ be given, set $\NPT{\ell}\coloneq\dim\big(\Poly{\ell}(T)\big)={\ell+3\choose 3}$, and denote by
\[
\BPoly{\ell}\coloneq\left\{
\varphi_{\mathcal{P},T}^i\st i\in\llbracket 0, \NPT{\ell}\llbracket
\right\}
\]
a basis for the space $\Poly{\ell}(T)$, such that $\varphi_{\mathcal{P},T}^0$ is constant and $\int_T\varphi_{\mathcal{P},T}^i=0$ for all $i\in\llbracket 1,\NPT{\ell}\llbracket$.
For ease of presentation, the basis is selected in a hierarchical way, that is, $\BPoly{\ell}\subset \BPoly{\ell+1}$
for all $\ell\ge 0$.
For a more thorough discussion on the choice of the basis $\BPoly{\ell}$, we refer the reader to \cite[Section B.1.1]{Di-Pietro.Droniou:20}.
Here, we limit ourselves to noticing that the choice of $\BPoly{\ell}$ can have a sizeable impact on the conditioning of the discrete problem, especially on distorted meshes.
In practice, especially when non-isotropic elements are present, choosing $\BPoly{\ell}$ orthonormal with respect to the usual $\Leb^2(T)$-inner product can mitigate these issues.
This is the choice made in the numerical examples of Section \ref{sec:numerical.examples}.
A basis
\[
\BPolyd{\ell}\coloneq\left\{
\bvec{\varphi}_{\bvec{\mathcal{P}},T}^i\st i\in\llbracket 0, 3\NPT{\ell}\llbracket
\right\}
\]
for $\Poly{\ell}(T)^3$ can be obtained by tensorisation of $\BPoly{\ell}$ setting $\bvec{\varphi}_{\bvec{\mathcal{P}},T}^i\coloneq\varphi_{\mathcal{P},T}^{(i \% \NPT{\ell})}\bvec{e}_{(i\div\NPT{\ell})}$ for all $i\in\llbracket 0, 3\NPT{\ell}\llbracket$, where $\div$ and $\%$ denote, respectively, the integer division and the modulo operations and, for $j\in\llbracket 0,2\rrbracket$, $\bvec{e}_j$ denotes the $(j+1)$th vector of the canonical basis of $\Real^3$.
For future use, we notice that it holds
\begin{equation}\label{eq:curl.varphi.PTi}
  \CURL\bvec{\varphi}_{\bvec{\mathcal{P}},T}^i
  =\left(\GRAD\varphi_{\mathcal{P},T}^{(i\%\NPT{\ell})}\right)\times \bvec{e}_{(i\div\NPT{\ell})}.
\end{equation}
\medskip

Letting $\NGT{\ell}\coloneq\dim\big(\Goly{\ell}(T)\big)=\NPT{\ell+1}-1$, and recalling that $\GRAD:\Poly{0,\ell+1}(T)\xrightarrow{\cong}\Goly{\ell}(T)$ is an isomorphism, a basis $\BGoly{\ell}$ for the space $\Goly{\ell}(T)$ is obtained setting
\[
\BGoly{\ell}\coloneq\left\{
\bvec{\varphi}_{\bvec{\mathcal{G}},T}^i\coloneq\GRAD\varphi_{\mathcal{P},T}^{i+1}\st
i\in\llbracket 0, \NGT{\ell}\llbracket
\right\}.
\]
In order to find a basis for $\Goly{\ell}(T)^\perp$, the $\Leb^2(T)^3$-orthogonal complement of $\Goly{\ell}(T)$ in $\Poly{\ell}(T)^3$, define the matrix
\[
\MAT{A}_{\bvec{\mathcal{G}},T}^\ell\coloneq\left[
  \int_T\bvec{\varphi}_{\bvec{\mathcal{G}},T}^i\cdot\bvec{\varphi}_{\bvec{\mathcal{P}},T}^j
  \right]_{(i,j)\in\llbracket 0, \NGT{\ell}\llbracket\times\llbracket 0, 3 \NPT{\ell}\llbracket}
\in\Real^{\NGT{\ell}\times 3\NPT{\ell}}.
\]
The right null space of this matrix is formed by the vectors $\VEC{V}=\left[V_j\right]_{j\in\llbracket 0,3\NPT{\ell}\llbracket}\in\Real^{3\NPT{\ell}}$ such that
\[
\MAT{A}_{\bvec{\mathcal{G}},T}^\ell\VEC{V}=\VEC{0}.
\]
Hence, $\VEC{V}$ is in the right null space of $\MAT{A}_{\bvec{\mathcal{G}},T}^\ell$ if and only if $\bvec{v}\coloneq\sum_{j=0}^{3\NPT{\ell}-1}V_j\bvec{\varphi}_{\bvec{\mathcal{P}},T}^j$ satisfies
\[
\left(
\int_T\bvec{\varphi}_{\bvec{\mathcal{G}},T}\cdot\bvec{v}=0\qquad\forall \bvec{\varphi}_{\bvec{\mathcal{G}},T}\in\BGoly{\ell}
\right)
\iff\bvec{v}\in\Goly{\ell}(T)^\perp.
\]
A basis $\BGoly{\ell,\perp}$ for $\Goly{\ell}(T)^\perp$ is thus easily obtained once a basis for the right null space of the matrix $\MAT{A}_{\bvec{\mathcal{G}},T}^\ell$ has been found, which can be done in \texttt{Eigen3} using the method \verb|FullPivLU::kernel|.
As the matrix $\MAT{A}_{\bvec{\mathcal{G}},T}^\ell$ has full rank, this basis is composed of $\NGOT{\ell}\coloneq 3 \NPT{\ell} - \NGT{\ell}$ vectors, each containing the coefficients of the expansion on $\BPolyd{\ell}$ of a basis function $\bvec{\varphi}_{\bvec{\mathcal{G}},T}^{i,\ell,\perp}$ of $\BGoly{\ell,\perp}$, for $i\in \llbracket 0,\NGOT{\ell}\llbracket$.
\medskip

Recalling that $\CURL:\Goly{\ell+1}(T)^\perp\xrightarrow{\cong}\Roly{\ell}(T)$ is an isomorphism, a basis $\BRoly{\ell}$ for $\Roly{\ell}(T)$ is simply obtained taking the curl of the elements of $\bvec{\mathfrak{G}}_T^{\ell+1,\perp}$, that is,
\[
\BRoly{\ell}\coloneq\left\{
\bvec{\varphi}_{\bvec{\mathcal{R}},T}^i\coloneq\CURL\bvec{\varphi}_{\bvec{\mathcal{G}},T}^{i,\ell+1,\perp}\st i\in\llbracket 0,\NGOT{\ell+1}\llbracket
\right\}.
\]
Notice that the curl of $\bvec{\varphi}_{\bvec{\mathcal{G}},T}^{i,\ell+1,\perp}\in\BGOoly{\ell+1}$ is easily computable from \eqref{eq:curl.varphi.PTi} using the expansion of this function in $\BPolyd{\ell+1}$.
Finally, a basis $\BROoly{\ell}$ for $\Roly{\ell}(T)^\perp$ is obtained computing a basis for the right null space of the matrix
\[
\MAT{A}_{\bvec{\mathcal{R}},T}^\ell\coloneq\left[
  \int_T\bvec{\varphi}_{\bvec{\mathcal{R}},T}^i\cdot\bvec{\varphi}_{\bvec{\mathcal{P}},T}^j
  \right]_{(i,j)\in\llbracket 0,\NRT{\ell}\llbracket\times\llbracket0, 3\NPT{\ell}\llbracket},
\]
where $\NRT{\ell}\coloneq\NGOT{\ell+1}$ is the dimension of $\Roly{\ell}(T)$.

For any $F\in\Fh$, bases for the spaces $\Poly{\ell}(F)$, $\Poly{\ell}(F)^2$, $\Goly{\ell}(F)$, $\Goly{\ell}(F)^\perp$, $\Roly{\ell}(F)$, $\Roly{\ell}(F)^\perp$ can be obtained in a similar way considering a local orthogonal system of coordinates.

\subsection{Vector operators and potentials}

The construction of the discrete vector operators and potentials requires the solution of local problems on mesh elements and faces.
For the sake of simplicity, we only detail the construction of the discrete divergence operator defined by \eqref{eq:DT} and of the corresponding potential defined by \eqref{eq:PdivT} for a given mesh element $T\in\Th$.
The construction of the discrete curl operators and of the corresponding vector potentials on mesh faces and elements follows similar principles.

According to the discussion in the previous section, a basis $\BXdivT$ for $\Xdiv$ can be obtained taking the Cartesian product of the bases for the spaces that compose $\Xdiv$, that is,
\begin{equation}\label{eq:BXdivT}
  \BXdivT\coloneq\BGoly{k-1}\times\BGOoly{k}\times\bigtimes_{F\in\FT}\BPoly[F]{k}.
\end{equation}
Let $\uvec{v}_T=(\bvec{v}_{\cvec{G},T},\bvec{v}_{\cvec{G},T}^\perp,(v_F)_{F\in\FT})\in\Xdiv$ be given, and denote by $\UVEC{V}_T$ the corresponding vector of degrees of freedom, partitioned as follows:
\[
\UVEC{V}_T=\begin{bmatrix}
\VEC{V}_{\cvec{G},T} \\ \VEC{V}_{\cvec{G},T}^\perp \\ \VEC{V}_{F_1} \\ \vdots \\ \VEC{V}_{F_{\card(\FT)}}
\end{bmatrix}\in\Real^{\NDIV}.
\]
Above, letting $\NPF{k}\coloneq\dim\big(\Poly{k}(F)\big)={k+2\choose 2}$, we have set $\NDIV\coloneq\dim\big(\Xdiv\big)=\NGT{k-1}+\NGOT{k}+\card(\FT)\NPF{k}$ and the subvectors
$\VEC{V}_{\cvec{G},T}\in\Real^{\NGT{k-1}}$,
$\VEC{V}_{\cvec{G},T}^\perp\in\Real^{\NGOT{k}}$,
and $\VEC{V}_F\in\Real^{\NPF{k}}$, $F\in\FT$,
collect the coefficients of the expansions of $\bvec{v}_{\cvec{G},T}$, $\bvec{v}_{\cvec{G},T}^\perp$ and $v_F$ in $\BGoly{k-1}$, $\BGOoly{k}$, and $\BPoly[F]{k}$, respectively.

Denoting by $\VEC{D}_T\in\Real^{\NPT{k}}$ the vector collecting the coefficients of the expansions of $\DT\uvec{v}_T$ in $\BPoly{k}$, the algebraic realisation of \eqref{eq:DT} is
\begin{equation}\label{eq:DT:algebraic}
  \MAT{M}_{D,T}\VEC{D}_T
  = -\MAT{B}_{D,T}\VEC{V}_{\cvec{G},T} + \sum_{F\in\FT}\omega_{TF}\MAT{B}_{D,F}\VEC{V}_F,
\end{equation}
with
\[
\begin{aligned}
  \MAT{M}_{D,T}
  &\coloneq\begin{bmatrix}
  \int_T\varphi_{\mathcal{P},T}^i\varphi_{\mathcal{P},T}^j
  \end{bmatrix}_{(i,j)\in\llbracket 0,\NPT{k}\llbracket^2},
  \\
  \MAT{B}_{D,T}
  &\coloneq\begin{bmatrix}
  \int_T\GRAD\varphi_{\mathcal{P},T}^i\cdot\bvec{\varphi}_{\cvec{G},T}^j
  \end{bmatrix}_{(i,j)\in\llbracket0,\NPT{k}\llbracket\times\llbracket 0,\NGT{k}\llbracket},
  \\
  \MAT{B}_{D,F}
  &\coloneq\begin{bmatrix}
  \int_F\varphi_{\mathcal{P},T}^i\varphi_{\mathcal{P},F}^j
  \end{bmatrix}_{(i,j)\in\llbracket0,\NPT{k}\llbracket\times\llbracket0,\NPF{k}\llbracket}.
\end{aligned}
\]
The linear operator $\DT:\Xdiv\to\Poly{k}(T)$ is thus represented, in the selected bases for $\Xdiv$ and $\Poly{k}(T)$, by the matrix $\MAT{D}_T\in\Real^{\NPT{k}\times\NDIV}$ whose $i$th column is the solution of the algebraic problem \eqref{eq:DT:algebraic} for $\UVEC{V}_T=\VEC{e}_i$, with $\VEC{e}_i$ denoting the $i$th vector of the canonical basis of $\Real^{\NDIV}$.
In the spirit of \cite[Appendix B]{Di-Pietro.Droniou:20}, these conditions can be translated into an explicit single equation on $\MAT{D}_T$.

The computation of the vector potential on $\Xdiv$ follows similar principles.
With the same notations as above, and additionally denoting by $\VEC{P}_{\DIV,T}\in\Real^{3\NPT{k}}$ the vector collecting the coefficients of the expansion of $\PdivT\uvec{v}_T$ in $\BPolyd{k}$, the algebraic realisation of \eqref{eq:PdivT} reads
\begin{equation}\label{eq:PdivT:algebraic}
  \begin{bmatrix}
    \MAT{M}_{P,T} \\
    \MAT{M}_{P,T}^{\perp}
  \end{bmatrix}
  \VEC{P}_{\DIV,T}
  = \begin{bmatrix}
    -\MAT{B}_{P,T}\MAT{D}_T\UVEC{V}_T + \sum_{F\in\FT}\omega_{TF}\MAT{B}_{P,F}\VEC{V}_F \\
    \MAT{B}_{P,T}^\perp\VEC{V}_{\cvec{G},T}^\perp
  \end{bmatrix},
\end{equation}
where
\[
\begin{aligned}
  \MAT{M}_{P,T} &\coloneq
  \begin{bmatrix}
    \int_T\GRAD\varphi_{\mathcal{P},T}^i\cdot\bvec{\varphi}_{\cvec{P},T}^j
  \end{bmatrix}_{(i,j)\in\llbracket1,\NPT{k+1}\llbracket\times\llbracket0,3\NPT{k}\llbracket},
  \\
  \MAT{M}_{P,T}^\perp &\coloneq
  \begin{bmatrix}
    \int_T\bvec{\varphi}_{\cvec{G},T}^{i,k,\perp}\cdot\bvec{\varphi}_{\cvec{P},T}^j
  \end{bmatrix}_{(i,j)\in\llbracket0,\NGOT{k}\llbracket\times\llbracket0,3\NPT{k}\llbracket},
  \\
  \MAT{B}_{P,T} &\coloneq
  \begin{bmatrix}
    \int_T\varphi_{\mathcal{P},T}^i\varphi_{\mathcal{P},T}^j
  \end{bmatrix}_{(i,j)\in\llbracket1,\NPT{k+1}\llbracket\times\llbracket 0,\NPT{k}\llbracket},
  \\
  \MAT{B}_{P,T}^\perp &\coloneq
  \begin{bmatrix}
    \int_T\bvec{\varphi}_{\cvec{G},T}^{i,k,\perp}\cdot\bvec{\varphi}_{\cvec{G},T}^{j,k,\perp}
  \end{bmatrix}_{(i,j)\in\llbracket0,\NGOT{k}\llbracket^2},
  \\
  \MAT{B}_F &\coloneq
  \begin{bmatrix}
    \int_F\varphi_{\mathcal{P},T}^i\varphi_{\mathcal{P},F}^j
  \end{bmatrix}_{(i,j)\in\llbracket1,\NPT{k+1}\llbracket\times\llbracket0,\NPF{k}\llbracket}
  \qquad\forall F\in\FT.
\end{aligned}
\]
Hence, $\PdivT:\Xdiv\to\Poly{k}(T)^3$ is represented by the matrix $\MAT{P}_{\DIV,T}\in\Real^{3\NPT{k}\times\NDIV}$ whose $i$th column is the solution of the algebraic problem \eqref{eq:PdivT:algebraic} for $\UVEC{V}_T=\VEC{e}_i$, $i\in\llbracket 0,\NDIV\llbracket$.

\subsection{Discrete $\Leb^2$-products and bilinear forms}

The last ingredient for the implementation of the DDR-based scheme \eqref{eq:discrete} are the discrete $\Leb^2$-products in $\Xcurl[h]$ and $\Xdiv[h]$.
For the sake of simplicity, we will focus here on the local $\Leb^2$-product in $\Xdiv$ defined by \eqref{eq:l2-product:XdivT}.
The corresponding global $\Leb^2$-product is obtained assembling the local contributions element by element in the usual (Finite Element) way.
The $\Leb^2$-product in $\Xcurl$ is constructed following the same general ideas.

The matrix representing $(\cdot,\cdot)_{\DIV,T}$ in the basis \eqref{eq:BXdivT} for $\Xdiv$ is
\[
\MAT{L}_{\DIV,T}\coloneq
\MAT{P}_{\DIV,T}^\trans\MAT{M}_{\DIV,T}\MAT{P}_{\DIV,T} + \MAT{S}_{\DIV,T}
\in\Real^{\NDIV\times\NDIV},
\]
where $\MAT{M}_{\DIV,T}\in\Real^{3\NPT{k+1}\times3\NPT{k+1}}$ is the mass matrix of $\Poly{k}(T)^3$, while $\MAT{S}_{\DIV,T}$ is the matrix counterpart of the stabilisation bilinear form $\mathrm{s}_{\DIV,T}$ defined by \eqref{eq:sdivT}.
This stabilisation bilinear form penalises in a least-square sense the difference between (projections of) the potential reconstruction and the corresponding components of the vector of polynomials in $\Xdiv$.
In the spirit of \cite[Section B.2.2]{Di-Pietro.Droniou:20}, the difference operator $\Xdiv[T]\ni\uvec{v}_T\mapsto\Gproj{k-1}{T}(\PdivT\uvec{v}_T)-\bvec{v}_{\cvec{G},T}\in\Goly{k-1}(T)$ is represented, in the selected bases for $\Xdiv[T]$ and $\Goly{k-1}(T)$, by the matrix
\begin{equation}\label{eq:Delta.div.T}
  \bvec{\Delta}_{\DIV,T}\coloneq
  \bvec{\Pi}_{\cvec{G},T}^{k-1}\MAT{P}_{\DIV,T}
  - \begin{bmatrix}
    \MAT{I}_{\NGT{k-1}}\MAT{0}\cdots\MAT{0}
  \end{bmatrix}
  \in\Real^{\NGT{k-1}\times\NDIV},
\end{equation}
where, denoting by $\MAT{M}_{\cvec{G},T}$ the mass matrix of $\Goly{k-1}(T)$,
\[
\bvec{\Pi}_{\cvec{G},T}^{k-1}\coloneq\MAT{M}_{\cvec{G},T}^{-1}\begin{bmatrix}
  \int_T\bvec{\varphi}_{\cvec{G},T}^i\cdot\bvec{\varphi}_{\cvec{P},T}^j
\end{bmatrix}_{(i,j)\in\llbracket0,\NGT{k-1}\llbracket\times\llbracket0,3\NPT{k}\llbracket}
\]
is the matrix representing the $\Leb^2$-orthogonal projector $\Gproj{k-1}{T}$ with domain restricted to $\Poly{k}(T)^3$, while the identity matrix $\MAT{I}_{\NGT{k-1}}$ in \eqref{eq:Delta.div.T} occupies the columns corresponding to the component $\bvec{v}_{\cvec{G},T}$.
Similarly, for all $F\in\FT$, the face difference operator $\Xdiv[T]\ni\uvec{v}_T\mapsto\PdivT\uvec{v}_T\cdot\normal_F-v_F\in\Poly{k}(F)$ is represented by the matrix
\begin{equation}\label{eq:Delta.div.F}
  \bvec{\Delta}_{\DIV,F}\coloneq
  \MAT{T}_F\MAT{P}_{\DIV,T}-\begin{bmatrix}
  \MAT{0}\cdots\MAT{I}_{\NPF{k}}\cdots\MAT{0}
  \end{bmatrix},
\end{equation}
where the matrix $\MAT{T}_F$ representing the normal trace operator applied to functions in $\Poly{k}(T)^3$ is such that, denoting by $\MAT{M}_{\mathcal{P},F}$ the mass matrix of $\Poly{k}(F)$,
\[
\MAT{T}_F\coloneq\MAT{M}_{\mathcal{P},F}^{-1}\begin{bmatrix}
\int_F\varphi_{\mathcal{P},F}^i(\bvec{\varphi}_{\cvec{P},T}^j\cdot\normal_F)
\end{bmatrix}_{(i,j)\in\llbracket0,\NPF{k}\llbracket\times\llbracket0,3\NPT{k}\llbracket},
\]
while the identity matrix $\MAT{I}_{\NPF{k}}$ in \eqref{eq:Delta.div.F} occupies the columns corresponding to the component $v_F$.
Finally, denoting, for all $F\in\FT$, by $\MAT{M}_{\mathcal{P},F}$ the mass matrix of $\Poly{k}(F)$, we set
\[
\MAT{S}_{\DIV,T}
\coloneq
\bvec{\Delta}_{\DIV,T}^\trans\MAT{M}_{\cvec{G},T}\bvec{\Delta}_{\DIV,T}
+ \sum_{F\in\FT} h_F\bvec{\Delta}_{\DIV,F}^\trans\MAT{M}_{\mathcal{P},F}\bvec{\Delta}_{\DIV,F}.
\]


\appendix 

\section{Proof of the inf--sup estimate (Theorem \ref{thm:stability})}\label{sec:inf-sup}

Throughout this section, it is assumed that the mesh $\Mh$ belongs to a regular sequence in the sense made precise in Section \ref{sec:discrete:well-posedness}, and that the permeability $\mu$ is piecewise constant on this mesh.
Mesh regularity implies, in particular, that the diameter $h_T$ of a mesh element $T\in\Th$ is comparable to the diameters $h_F$ and $h_E$ of any of its faces $F\in\FT$ or edges $E\in\ET$, uniformly for all meshes in the sequence.
Assumption \ref{assum:poincare.edges}, on the other hand, is only useful for Theorem \ref{thm:poincare.Ch} and, as a consequence, for the proof of the inf--sup estimate.
For the sake of conciseness, we write $a\lesssim b$ as a shorthand for $a\le Cb$ with $C$ independent of $h$, of the chosen element/face/edge, and of the chosen functions involved in the quantities $a,b$ (so $C$ depends only on $\Omega$, $\mu$, the mesh regularity parameter and, when appropriate, on $\alpha$ in Assumption \ref{assum:poincare.edges}). The notation $a\approx b$ means that $a\lesssim b$ and $b\lesssim a$. 

We first establish a few results on the interpolators, operators, and potentials linked to the portion of the DDR sequence \eqref{eq:ddr.sequence} relevant to our purpose. Similar results can also be proved the remaining portion of the sequence. The analysis is more easily carried out using $\Leb^2(\Omega)^3$-like norms on $\Xcurl[h]$ and $\Xdiv[h]$ that are equivalent to, but not coincident with, $\norm[\mu,\CURL,h]{{\cdot}}$ and $\norm[\DIV,h]{{\cdot}}$, respectively.
Specifically, we let
\begin{subequations}
\label{eq:def.tnorm.curl}
  \begin{gather}\label{eq:tnorm.curl.h}
    \text{
      $\tnorm[\CURL,h]{\uvec{\zeta}_h}\coloneq\left(
      \sum_{T\in\Th}\tnorm[\CURL,T]{\uvec{\zeta}_T}^2
      \right)^{\nicefrac12}$
      for all $\uvec{\zeta}_h\in\Xcurl[h]$
    }
    \\ \label{eq:tnorm.curl.T}
    \text{
      with
      $\tnorm[\CURL,T]{\uvec{\zeta}_T}\coloneq\left(
      \norm[T]{\bvec{\zeta}_{\cvec{R},T}\!+\bvec{\zeta}_{\cvec{R},T}^\perp}^2
      +\!\!\!\sum_{F\in\FT}\!\!h_F\tnorm[\CURL,F]{\uvec{\zeta}_F}^2
      \right)^{\nicefrac12}$
      for all $T\in\Th$ and all $\uvec{\zeta}_T\!\!\in\!\Xcurl[T]$
    }
    \\ \label{eq:tnorm.curl.F}
    \text{
      and
      $\tnorm[\CURL,F]{\uvec{\zeta}_F}\coloneq\left(
      \norm[F]{\bvec{\zeta}_{\cvec{R},F}+\bvec{\zeta}_{\cvec{R},F}^\perp}^2
      +\!\! \sum_{E\in\EF}\!\! h_E\norm[E]{\zeta_E}^2
      \right)^{\nicefrac12}$
      for all $F\in\Fh$ and all $\uvec{\zeta}_F\in\Xcurl[F]$,
    }
  \end{gather}
\end{subequations}
and
\begin{equation}\label{eq:def.tnorm.div}
\begin{gathered}
  \text{
    $\tnorm[\DIV,h]{\uvec{v}_h}\coloneq\left(
    \sum_{T\in\Th}\tnorm[\DIV,T]{\uvec{v}_T}^2
    \right)^{\frac12}$
    for all $\uvec{v}_h\in\Xdiv[h]$
  }
  \\
  \text{
    with $\tnorm[\DIV,T]{\uvec{v}_T}\coloneq\left(
    \norm[T]{\bvec{v}_{\cvec{G},T} + \bvec{v}_{\cvec{G},T}^\perp}^2
    +\!\! \sum_{F\in\FT}\!\!h_F\norm[F]{v_F}^2
    \right)^{\nicefrac12}$
    for all $T\in\Th$ and all $\uvec{v}_T\in\Xdiv$.
  }
\end{gathered}
\end{equation}
We note that, by orthogonality, in the expressions above we have $\norm[T]{\bvec{\zeta}_{\cvec{R},T}+\bvec{\zeta}_{\cvec{R},T}^\perp}^2=\norm[T]{\bvec{\zeta}_{\cvec{R},T}}^2+\norm[T]{\bvec{\zeta}_{\cvec{R},T}^\perp}^2$, $\norm[F]{\bvec{\zeta}_{\cvec{R},F}+\bvec{\zeta}_{\cvec{R},F}^\perp}^2=\norm[F]{\bvec{\zeta}_{\cvec{R},F}}^2+\norm[F]{\bvec{\zeta}_{\cvec{R},F}^\perp}^2$ and $\norm[T]{\bvec{v}_{\cvec{G},T} + \bvec{v}_{\cvec{G},T}^\perp}^2=\norm[T]{\bvec{v}_{\cvec{G},T}}^2+\norm[T]{\bvec{v}_{\cvec{G},T}^\perp}^2$.
The powers of the face and edge diameters in front of the various contributions to the norms $\tnorm[\CURL,h]{{\cdot}}$ and $\tnorm[\DIV,h]{{\cdot}}$ are selected so as to ensure that all the terms have the same scaling.

\subsection{Boundedness of curl operators}

\begin{lemma}[Isomorphism property of {$\CURL$} on elements]\label{lem:CURL.iso}
For all $\ell\ge 0$ and all $T\in\Th$, the mapping $\CURL:\Goly{\ell+1}(T)^\perp\xrightarrow{\cong}\Roly{\ell}(T)$ is an isomorphism and
\begin{equation}\label{eq:est.r.CURL}
\norm[T]{\bvec{r}}\lesssim h_T\norm[T]{\CURL\bvec{r}}\qquad\forall \bvec{r}\in \Goly{\ell+1}(T)^\perp.
\end{equation}
\end{lemma}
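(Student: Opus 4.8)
The plan is to separate the claim into the algebraic fact that $\CURL$ maps $\Goly{\ell+1}(T)^\perp$ bijectively onto $\Roly{\ell}(T)$, and the quantitative bound \eqref{eq:est.r.CURL}; the latter will also re-prove injectivity. For surjectivity, observe that $\Poly{\ell+1}(T)^3=\Goly{\ell+1}(T)\oplus\Goly{\ell+1}(T)^\perp$ and $\CURL\circ\GRAD=\bvec{0}$, so $\CURL\big(\Goly{\ell+1}(T)^\perp\big)=\CURL\big(\Poly{\ell+1}(T)^3\big)=\Roly{\ell}(T)$ by definition of $\Roly{\ell}(T)$. For injectivity, invoke the exactness of the polynomial de Rham complex, namely $\Ker\big(\CURL:\Poly{\ell+1}(T)^3\to\Poly{\ell}(T)^3\big)=\GRAD\Poly{\ell+2}(T)=\Goly{\ell+1}(T)$ (a classical property of polynomial spaces; see, e.g., \cite{Arnold:18}), so that $\Goly{\ell+1}(T)^\perp\cap\Ker\CURL=\{\bvec{0}\}$. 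Being a bijection between finite-dimensional spaces, $\CURL:\Goly{\ell+1}(T)^\perp\to\Roly{\ell}(T)$ is then an isomorphism.

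To prove \eqref{eq:est.r.CURL} I would construct an explicit (Koszul) right inverse of $\CURL$. By mesh regularity there is $\bvec{x}_T\in T$ such that $T$ is star-shaped with respect to the ball $B(\bvec{x}_T,\varrho\,h_T)$, with $\varrho>0$ depending only on the mesh regularity parameter; up to a translation, assume $\bvec{x}_T=\bvec{0}$. Given $\bvec{r}\in\Goly{\ell+1}(T)^\perp$, set $\bvec{w}\coloneq\CURL\bvec{r}\in\Roly{\ell}(T)$ (hence $\DIV\bvec{w}=0$) and decompose $\bvec{w}=\sum_{j=0}^{\ell}\bvec{w}_j$ into homogeneous components. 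Put $\bvec{\tilde r}(\bvec{x})\coloneq\sum_{j=0}^{\ell}\tfrac1{j+2}\,\bvec{w}_j(\bvec{x})\times\bvec{x}\in\Poly{\ell+1}(T)^3$; using the identity $\CURL(\bvec{a}\times\bvec{x})=(2+\deg\bvec{a})\,\bvec{a}-\bvec{x}\,\DIV\bvec{a}$, valid for homogeneous $\bvec{a}$, together with $\DIV\bvec{w}=0$, one checks $\CURL\bvec{\tilde r}=\bvec{w}$. Since $|\bvec{x}|\le h_T$ on $T$ and $\tfrac1{j+2}\le\tfrac12$, this gives $\norm[T]{\bvec{\tilde r}}\le\tfrac{h_T}{2}\sum_{j=0}^{\ell}\norm[T]{\bvec{w}_j}$. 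The scaling $\bvec{x}\mapsto h_T^{-1}\bvec{x}$ maps $T$ onto a domain $\hat T$ with $B(\bvec{0},\varrho)\subseteq\hat T\subseteq B(\bvec{0},1)$, commutes with the homogeneous decomposition, and rescales all $\Leb^2$-norms by the same power of $h_T$; since on the fixed finite-dimensional space of $\Real^3$-valued polynomials of degree $\le\ell$ the $\Leb^2$-norms over $B(\bvec{0},\varrho)$, over $\hat T$, and over $B(\bvec{0},1)$ are mutually equivalent with constants depending only on $\varrho$, the homogeneous-decomposition map is bounded in $\Leb^2(\hat T)^3$ uniformly over this family, yielding $\sum_{j=0}^{\ell}\norm[T]{\bvec{w}_j}\lesssim\norm[T]{\bvec{w}}$. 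Finally, $\CURL(\bvec{\tilde r}-\bvec{r})=\bvec{0}$ forces $\bvec{\tilde r}-\bvec{r}\in\Goly{\ell+1}(T)$ by polynomial exactness, so $\bvec{r}$ is the $\Leb^2(T)$-orthogonal projection of $\bvec{\tilde r}$ onto $\Goly{\ell+1}(T)^\perp$; therefore $\norm[T]{\bvec{r}}\le\norm[T]{\bvec{\tilde r}}\lesssim h_T\norm[T]{\bvec{w}}=h_T\norm[T]{\CURL\bvec{r}}$, which is \eqref{eq:est.r.CURL}, the hidden constant depending only on $\ell$ and the mesh regularity parameter.

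The only genuinely delicate point is this last one: on a fixed element the isomorphism property already yields a finite constant in \eqref{eq:est.r.CURL}, but uniformity of this constant over all elements of all meshes in the regular sequence requires controlling its dependence on the element's shape, and this is exactly what the explicit Koszul potential combined with the scaling to $\hat T$ provides — the star-shapedness with respect to a ball of radius comparable to $h_T$, a consequence of the mesh-regularity assumption, being crucial both for the pointwise bound on $\bvec{\tilde r}$ and for the norm equivalences on the reference configuration. An alternative, avoiding the explicit potential, is a compactness argument in which a failure of uniformity would produce, along a sequence of normalised reference elements, a limiting polynomial contradicting the isomorphism property; that works too but needs some care with the convergence of the integration domains.
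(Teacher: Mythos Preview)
Your proof is correct and takes a genuinely different, more constructive route than the paper. The paper also reduces to a reference configuration (assuming $h_T=1$, $\ball{\varrho}\subset T\subset\ball{1}$), but instead of building an explicit preimage it invokes the isomorphism $\CURL:\Goly{\ell+1}(\ball{1})^\perp\to\Roly{\ell}(\ball{1})$ on the \emph{fixed} unit ball---whose inverse has a norm depending only on $\ell$---to obtain $\hat{\bvec{r}}\in\Goly{\ell+1}(\ball{1})^\perp$ with $\CURL\hat{\bvec{r}}=\CURL\bvec{r}$ on $\ball{1}$ and $\norm[\ball{1}]{\hat{\bvec{r}}}\lesssim\norm[\ball{1}]{\CURL\bvec{r}}$; it then uses the same projection step as you ($\bvec{r}=\GOproj{\ell+1}{T}\hat{\bvec{r}}_{|T}$) together with the chain $\norm[T]{{\cdot}}\le\norm[\ball{1}]{{\cdot}}\lesssim\norm[\ball{\varrho}]{{\cdot}}\le\norm[T]{{\cdot}}$ on polynomials. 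Your Koszul construction, by contrast, produces the preimage directly on $T$, gives the explicit factor $h_T$ from the pointwise bound $|\bvec{x}|\le h_T$, and shifts the scaling/norm-equivalence burden to the control of the homogeneous decomposition $\sum_j\norm[T]{\bvec{w}_j}\lesssim\norm[T]{\bvec{w}}$. Both arguments hinge on the same two ingredients---a preimage whose norm is controlled via a reference domain, followed by projection onto $\Goly{\ell+1}(T)^\perp$---but yours is more explicit (and would yield a computable constant), while the paper's is shorter and avoids tracking homogeneous components.
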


\begin{proof}
The fact that $\CURL$ establishes an isomorphism between $\Goly{\ell+1}(T)^\perp$ and $\Roly{\ell}(T)$ follows from \cite[Corollary 7.3]{Arnold:18}. We therefore focus on proving \eqref{eq:est.r.CURL}. 
Using a scaling argument as in, e.g., \cite[Lemma 1.28]{Di-Pietro.Droniou:20}, we can assume that $T$ has diameter $1$, is contained in the unit ball $\ball{1}$ centred at $0$, and contains the ball $\ball{\varrho}$ centered at 0 and of size $\varrho\gtrsim 1$. Let $\bvec{r}\in \Goly{\ell+1}(T)^\perp$, which can be considered as a polynomial $\widetilde{\bvec{r}}$ on $\ball{1}$. Set $\bvec{q}\coloneq\CURL\widetilde{\bvec{r}}\in\Roly{\ell}(\ball{1})$. The mapping $\CURL:\Goly{\ell+1}(\ball{1})^\perp\to\Roly{\ell}(\ball{1})$ is an isomorphism, and its inverse is therefore continuous with a norm that only depends on these spaces -- that is, only on $\ell$ and $d$. There exists therefore $\widehat{\bvec{r}}\in\Goly{\ell+1}(\ball{1})^\perp\subset\Poly{\ell+1}(\ball{1})^3$ such that $\CURL\widehat{\bvec{r}}=\bvec{q}=\CURL\widetilde{\bvec{r}}$ and
\begin{equation}\label{eq:est.wr}
\norm[\ball{1}]{\widehat{\bvec{r}}}\lesssim \norm[\ball{1}]{\bvec{q}}.
\end{equation}
In particular, $\CURL(\widehat{\bvec{r}}_{|T}-\bvec{r})=(\CURL\widehat{\bvec{r}})_{|T}-\CURL\bvec{r}=\bvec{q}_{|T}-\CURL\bvec{r}=\bvec{0}$, and thus $\widehat{\bvec{r}}_{|T}-\bvec{r}$ lies in the kernel of $\CURL$ on $\Poly{\ell+1}(T)^3$, which is $\Goly{\ell+1}(T)$. Taking the projection on $\Goly{\ell+1}(T)^\perp$, to which $\bvec{r}$ belongs, we infer $\GOproj{\ell+1}{T}\widehat{\bvec{r}}_{|T}=\GOproj{\ell+1}{T}\bvec{r}=\bvec{r}$.
The estimate \eqref{eq:est.wr} then yields
\[
\norm[T]{\bvec{r}}=\norm[T]{\GOproj{\ell+1}{T}\widehat{\bvec{r}}_{|T}}\le\norm[T]{\widehat{\bvec{r}}}\le
\norm[\ball{1}]{\widehat{\bvec{r}}}\lesssim \norm[\ball{1}]{\bvec{q}}\lesssim \norm[\ball{\varrho}]{\bvec{q}}\le\norm[T]{\bvec{q}},
\]
the second last inequality following from \cite[Eq. (1.43)]{Di-Pietro.Droniou:20} and the fact that $\bvec{q}\in\Poly{\ell}(T)^3$. Since we are in a situation where $T$ has diameter $h_T=1$, this concludes the proof. \end{proof}

\begin{proposition}[Boundedness of curl operators and vector potentials]\label{prop:bd.curl}
It holds
\begin{gather}\label{eq:bd.CF.trFt}
  \text{
    $\norm[F]{\CF \uvec{\upsilon}_F}\lesssim h_F^{-1}\tnorm[\CURL,F]{\uvec{\upsilon}_F}$ and
    $\norm[F]{\trFt\uvec{\upsilon}_F}\lesssim \tnorm[\CURL,F]{\uvec{\upsilon}_F}$
  }\quad\forall F\in\Fh\,,\quad\forall \uvec{\upsilon}_F\in \Xcurl[F],
  \\ \label{eq:bd.uCT.PcurlT}
  \text{
    $\tnorm[\DIV,T]{\uCT \uvec{\upsilon}_T}\lesssim h_T^{-1}\tnorm[\CURL,T]{\uvec{\upsilon}_T}$ and 
    $\norm[T]{\PcurlT\uvec{\upsilon}_T}\lesssim \tnorm[\CURL,T]{\uvec{\upsilon}_T}$
  }\quad\forall T\in\Th\,,\quad\forall\uvec{\upsilon}_T\in \Xcurl[T].
\end{gather}
\end{proposition}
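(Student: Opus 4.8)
The plan is to establish the four estimates in the order in which they become available: first the two face-level bounds in \eqref{eq:bd.CF.trFt}, then an auxiliary bound on $\CT$, then the bound on $\uCT$, and finally the bound on $\PcurlT$. In each case I would test the defining relation of the operator against a carefully chosen polynomial so as to obtain an inequality of the type $\norm{{\cdot}}^2\lesssim(\text{data})\,\norm{{\cdot}}$, and then invoke, on polynomial spaces, the standard inverse inequality ($\norm[X]{\GRAD_X q}\lesssim h_X^{-1}\norm[X]{q}$) and the discrete trace inequality ($\norm[\partial X]{q}\lesssim h_X^{-\nicefrac12}\norm[X]{q}$), which hold with constants depending only on the mesh regularity parameter and the polynomial degree. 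Throughout, I would repeatedly use that $\card\FT$ and $\card\EF$ are bounded and that $h_E\approx h_F\approx h_T$ for $E\in\ET$, $F\in\FT$, so that Cauchy--Schwarz over faces or edges produces the $h$-powers built into the norms $\tnorm[\CURL,F]{{\cdot}}$, $\tnorm[\CURL,T]{{\cdot}}$, $\tnorm[\DIV,T]{{\cdot}}$.

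For the face curl, I would test \eqref{eq:CF} with $q=\CF\uvec{\upsilon}_F$: the first term is bounded by $\norm[F]{\bvec{\upsilon}_{\cvec{R},F}}\,\norm[F]{\VROT_F q}\lesssim h_F^{-1}\norm[F]{\bvec{\upsilon}_{\cvec{R},F}}\,\norm[F]{q}$ using $|\VROT_F q|=|\GRAD_F q|$ and the inverse inequality, while the edge sum is bounded by $\sum_{E\in\EF}\norm[E]{\upsilon_E}\,\norm[E]{q}\lesssim h_F^{-1}\big(\sum_{E\in\EF}h_E\norm[E]{\upsilon_E}^2\big)^{\nicefrac12}\norm[F]{q}$ after a discrete trace inequality on $F$ and a Cauchy--Schwarz over the (boundedly many) edges. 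Since $\norm[F]{\bvec{\upsilon}_{\cvec{R},F}}\le\tnorm[\CURL,F]{\uvec{\upsilon}_F}$ and $\big(\sum_{E\in\EF}h_E\norm[E]{\upsilon_E}^2\big)^{\nicefrac12}\le\tnorm[\CURL,F]{\uvec{\upsilon}_F}$, dividing by $\norm[F]{\CF\uvec{\upsilon}_F}$ yields the first bound in \eqref{eq:bd.CF.trFt}. For the tangential potential, I would pick $(r,\bvec{\tau})\in\Poly{0,k+1}(F)\times\Roly{k}(F)^\perp$ with $\VROT_F r+\bvec{\tau}=\trFt\uvec{\upsilon}_F$, which is possible because $\VROT_F$ maps $\Poly{0,k+1}(F)$ isomorphically onto $\Roly{k}(F)$ and $\Poly{k}(F)^2=\Roly{k}(F)\oplus\Roly{k}(F)^\perp$, so that the left-hand side of \eqref{eq:trFt} becomes $\norm[F]{\trFt\uvec{\upsilon}_F}^2$. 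The key analytic point here is that $r$ has zero average, so the Poincar\'e--Wirtinger inequality gives $\norm[F]{r}\lesssim h_F\norm[F]{\GRAD_F r}=h_F\norm[F]{\VROT_F r}\le h_F\norm[F]{\trFt\uvec{\upsilon}_F}$ (orthogonality of $\VROT_F r$ and $\bvec{\tau}$). Combining this with the bound on $\CF$ just obtained, with a discrete trace inequality on the edge terms, and with $\norm[F]{\bvec{\upsilon}_{\cvec{R},F}^\perp}\le\tnorm[\CURL,F]{\uvec{\upsilon}_F}$ for the last term, the right-hand side of \eqref{eq:trFt} is controlled by $\tnorm[\CURL,F]{\uvec{\upsilon}_F}\,\norm[F]{\trFt\uvec{\upsilon}_F}$, whence the second bound in \eqref{eq:bd.CF.trFt}.

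At the element level I would first establish $\norm[T]{\CT\uvec{\upsilon}_T}\lesssim h_T^{-1}\tnorm[\CURL,T]{\uvec{\upsilon}_T}$ by testing \eqref{eq:CT} with $\bvec{\tau}=\CT\uvec{\upsilon}_T$: the term $\int_T\bvec{\upsilon}_{\cvec{R},T}\cdot\CURL\bvec{\tau}$ is handled by the inverse inequality, and the face terms by $\norm[F]{\trFt\uvec{\upsilon}_F}\,\norm[F]{\bvec{\tau}}$ followed by a discrete trace inequality and \eqref{eq:bd.CF.trFt}. Then, splitting $\tnorm[\DIV,T]{\uCT\uvec{\upsilon}_T}^2$ by orthogonality of the decomposition of $\Poly{k}(T)^3$ into $\norm[T]{\Gproj{k-1}{T}\CT\uvec{\upsilon}_T+\GOproj{k}{T}\CT\uvec{\upsilon}_T}^2\le\norm[T]{\CT\uvec{\upsilon}_T}^2$ plus $\sum_{F\in\FT}h_F\norm[F]{\CF\uvec{\upsilon}_F}^2$, both pieces are $\lesssim h_T^{-2}\tnorm[\CURL,T]{\uvec{\upsilon}_T}^2$ using $h_F\approx h_T$ and \eqref{eq:bd.CF.trFt}, giving the first bound in \eqref{eq:bd.uCT.PcurlT}. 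Finally, for $\PcurlT$ I would use \eqref{eq:pcurl} to reduce matters to bounding $\norm[T]{\hPcurlT\uvec{\upsilon}_T}$ (the $\Leb^2$-projection $\Rproj{k-1}{T}$ being a contraction and $\norm[T]{\bvec{\upsilon}_{\cvec{R},T}}\le\tnorm[\CURL,T]{\uvec{\upsilon}_T}$), and then test \eqref{eq:PcurlT} with the pair $(\bvec{v},\bvec{\tau})\in\Goly{k+1}(T)^\perp\times\Roly{k}(T)^\perp$ such that $\CURL\bvec{v}+\bvec{\tau}=\hPcurlT\uvec{\upsilon}_T$, which exists by the isomorphism of Lemma~\ref{lem:CURL.iso}. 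The crucial bound $\norm[T]{\bvec{v}}\lesssim h_T\norm[T]{\CURL\bvec{v}}\le h_T\norm[T]{\hPcurlT\uvec{\upsilon}_T}$ from that lemma, together with the auxiliary bound on $\CT$, the face bound on $\trFt$, a discrete trace inequality on $\bvec{v}$, and $\norm[T]{\bvec{\upsilon}_{\cvec{R},T}^\perp}\le\tnorm[\CURL,T]{\uvec{\upsilon}_T}$, controls the right-hand side of \eqref{eq:PcurlT} by $\tnorm[\CURL,T]{\uvec{\upsilon}_T}\,\norm[T]{\hPcurlT\uvec{\upsilon}_T}$, which completes the proof.

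The main obstacle is not any single deep idea but the careful bookkeeping of the powers of $h_F$, $h_E$, $h_T$ across the chain of dependencies (the bound on $\CF$ feeds the bound on $\trFt$, which feeds the bounds on $\CT$, $\uCT$, and $\PcurlT$), ensuring that every Cauchy--Schwarz over faces or edges and every use of the inverse and discrete trace inequalities is compatible with the scalings deliberately put into the $\tnorm{{\cdot}}$-norms. The one genuinely analytic ingredient beyond these standard polynomial inequalities is the Poincar\'e--Wirtinger inequality used for the zero-average polynomial $r$ in the estimate of $\trFt$, and the only structural inputs are the isomorphism of Lemma~\ref{lem:CURL.iso} and the orthogonal decompositions $\Poly{k}(F)^2=\Roly{k}(F)\oplus\Roly{k}(F)^\perp$ and $\Poly{k}(T)^3=\Goly{k-1}(T)\oplus\big(\Goly{k}(T)\cap\Goly{k-1}(T)^\perp\big)\oplus\Goly{k}(T)^\perp$.
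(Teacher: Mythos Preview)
Your proposal is correct and follows essentially the same approach as the paper: inverse and discrete trace inequalities for $\CF$ and $\CT$, the Poincar\'e--Wirtinger inequality on the zero-average polynomial $r$ for $\trFt$, and Lemma~\ref{lem:CURL.iso} for $\PcurlT$. The only difference is stylistic---you test each defining relation directly against the quantity to be estimated (e.g.\ $q=\CF\uvec{\upsilon}_F$, $\VROT_F r+\bvec{\tau}=\trFt\uvec{\upsilon}_F$), whereas the paper tests against a generic $q$ or $\bvec{\sigma}$ and then takes a supremum; the two are equivalent.
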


\begin{proof}
Let $q\in\Poly{k}(F)$ and apply the Cauchy--Schwarz inequality along with the inverse and discrete trace inequalities of \cite[Sections 1.2.5 and 1.2.6]{Di-Pietro.Droniou:20} to \eqref{eq:CF} to write
\[
\int_F \CF\uvec{\upsilon}_F\, q\lesssim \norm[F]{\bvec{\upsilon}_{\cvec{R},F}}h_F^{-1}\norm[F]{q}+
\sum_{E\in\EF}\norm[E]{\upsilon_E} h_F^{-\frac12}\norm[F]{q}.
\]
Taking the supremum over the set $\left\{q\in\Poly{k}(F)\st\norm[F]{q}\le 1\right\}$ and recalling that $h_F\approx h_E$ by mesh regularity leads to 
\begin{equation}\label{est:CF.final}
\norm[F]{\CF\uvec{\upsilon}_F}\lesssim h_F^{-1}\left(\norm[F]{\bvec{\upsilon}_{\cvec{R},F}}+\sum_{E\in\EF}h_E^{\frac12}\norm[E]{\upsilon_E} \right)\lesssim h_F^{-1}\tnorm[\CURL,F]{\uvec{\upsilon}_F}.
\end{equation}
This proves the estimate on $\CF\uvec{\upsilon}_F$ in \eqref{eq:bd.CF.trFt}.

Let $\bvec{\sigma}\in\Poly{k}(F)^2$ and let $r\in\Poly{0,k+1}(F)$ be such that $\VROT_F r=\Rproj{k}{F}\bvec{\sigma}$, that is, $\GRAD_F r=\rotation{\nicefrac{\pi}{2}}(\Rproj{k}{F}\bvec{\sigma})$. The local Poincar\'e--Wirtinger inequality \cite[Remark 1.46]{Di-Pietro.Droniou:20} then yields 
\begin{equation}\label{eq:est.r}
  \norm[F]{r}\lesssim h_F\norm[F]{\Rproj{k}{F}\bvec{\sigma}}
  \le h_F\norm[F]{\bvec{\sigma}},
\end{equation}
the last inequality being a consequence of the $\Leb^2(F)^2$-boundedness of $\Rproj{k}{F}$.
Applying the definition \eqref{eq:trFt} of $\trFt\uvec{\upsilon}_F$ to this $r$ and $\bvec{\tau}=\ROproj{k}{F}\bvec{\sigma}$, we obtain, with the help of Cauchy--Schwarz and discrete trace inequalities
\begin{align*}
\int_F\trFt\uvec{\upsilon}_F\cdot\bvec{\sigma}\lesssim{}& \norm[F]{\CF\uvec{\upsilon}_F}\norm[F]{r}+\sum_{E\in\EF}
\norm[E]{\upsilon_E}h_F^{-\frac12}\norm[F]{r}+\norm[F]{\bvec{\upsilon}_{\cvec{R},F}^\perp}\norm[F]{\bvec{\sigma}}\\
\lesssim{}&\tnorm[\CURL,F]{\uvec{\upsilon}_F}\norm[F]{\bvec{\sigma}}+\sum_{E\in\EF}
\norm[E]{\upsilon_E}h_E^{\frac12}\norm[F]{\bvec{\sigma}}+\norm[F]{\bvec{\upsilon}_{\cvec{R},F}^\perp}\norm[F]{\bvec{\sigma}}
\end{align*}
where the conclusion follows from \eqref{est:CF.final} along with \eqref{eq:est.r} and $h_F\approx h_E$. The estimate on $\trFt\uvec{\upsilon}_F$ follows by taking the supremum over $\bvec{\sigma}\in\Poly{k}(F)^2$ such that $\norm[F]{\bvec{\sigma}}\le 1$.

The estimates \eqref{eq:bd.uCT.PcurlT} are obtained applying the same arguments as above, using the boundedness of $\trFt$ stated in \eqref{eq:bd.CF.trFt} and invoking Lemma \ref{lem:CURL.iso} in lieu of the Poincar\'e--Wirtinger inequality to ensure that, for a given $\bvec{\sigma}\in\Poly{k}(T)^3$, the polynomial $\bvec{v}\in \Goly{k+1}(T)^\perp$ to be used in \eqref{eq:PcurlT} and such that $\CURL\bvec{v}=\Rproj{k}{T}\bvec{\sigma}$ satisfies $\norm[T]{\bvec{v}}\lesssim h_T\norm[T]{\bvec{\sigma}}$.
\end{proof}

\subsection{Equivalence of norms}

In line with the notations in \eqref{eq:def.tnorm.curl} and \eqref{eq:def.tnorm.div}, we denote by $\norm[\mu,\CURL,T]{{\cdot}}$ (resp.\ $\norm[\DIV,T]{{\cdot}}$) the restriction to $\Xcurl[T]$ (resp.\ $\Xdiv[T]$) of $\norm[\mu,\CURL,h]{{\cdot}}$ (resp.\ $\norm[\DIV,h]{{\cdot}}$).

\begin{proposition}[Equivalence of norms on discrete spaces]
For $\clubsuit=T\in\Th$ or $\clubsuit=h$, it holds
\begin{align}\label{eq:equiv.Xcurl}
\norm[\mu,\CURL,\clubsuit]{\uvec{\upsilon}_\clubsuit}\approx \tnorm[\CURL,\clubsuit]{\uvec{\upsilon}_\clubsuit}\qquad\forall \uvec{\upsilon}_\clubsuit\in\Xcurl[\clubsuit],
\\
\label{eq:equiv.Xdiv}
\norm[\DIV,\clubsuit]{\uvec{w}_\clubsuit}\approx \tnorm[\DIV,\clubsuit]{\uvec{w}_\clubsuit}\qquad\forall \uvec{w}_\clubsuit\in\Xdiv[\clubsuit].
\end{align}
\end{proposition}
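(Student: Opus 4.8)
The plan is to prove \eqref{eq:equiv.Xcurl} and \eqref{eq:equiv.Xdiv} first at the local level $\clubsuit=T$, the global case $\clubsuit=h$ then following immediately by squaring and summing over $T\in\Th$ (both $\norm[\mu,\CURL,h]{{\cdot}}^2$ and $\tnorm[\CURL,h]{{\cdot}}^2$ are sums of their local counterparts, and likewise for $\DIV$). For a fixed element $T$, the quantities $\norm[\mu,\CURL,T]{\uvec{\upsilon}_T}^2 = \int_T\mu_T|\PcurlT\uvec{\upsilon}_T|^2 + \mu_T\mathrm{s}_{\CURL,T}(\uvec{\upsilon}_T,\uvec{\upsilon}_T)$ and $\norm[\DIV,T]{\uvec{w}_T}^2 = \int_T|\PdivT\uvec{w}_T|^2 + \mathrm{s}_{\DIV,T}(\uvec{w}_T,\uvec{w}_T)$ are manifestly nonnegative quadratic forms; since $\mu_T\in[\underline\mu,\overline\mu]$ with bounds independent of $h$, the weight $\mu_T$ is harmless and can be dropped up to the $\approx$ relation. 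So it suffices to compare $\int_T|\PcurlT\uvec{\upsilon}_T|^2 + \mathrm{s}_{\CURL,T}(\uvec{\upsilon}_T,\uvec{\upsilon}_T)$ with $\tnorm[\CURL,T]{\uvec{\upsilon}_T}^2$, and similarly in the divergence case.

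For the direction $\norm[\mu,\CURL,T]{\uvec{\upsilon}_T}\lesssim\tnorm[\CURL,T]{\uvec{\upsilon}_T}$, I would bound each term of $\norm[\mu,\CURL,T]{{\cdot}}$ by $\tnorm[\CURL,T]{{\cdot}}$: the potential term is controlled by the boundedness estimate $\norm[T]{\PcurlT\uvec{\upsilon}_T}\lesssim\tnorm[\CURL,T]{\uvec{\upsilon}_T}$ already proved in Proposition \ref{prop:bd.curl}; the stabilisation term is a sum of face and edge contributions, each of the form (weight)$\times\norm[F]{\text{projection of }\PcurlT\uvec{\upsilon}_T - \text{component of }\uvec{\upsilon}_F}^2$ (resp.\ on edges), and each is estimated by triangle inequality, the $\Leb^2$-boundedness of the projectors, a discrete trace inequality $\norm[F]{\PcurlT\uvec{\upsilon}_T}\lesssim h_F^{-1/2}\norm[T]{\PcurlT\uvec{\upsilon}_T}$ (from \cite[Section 1.2.6]{Di-Pietro.Droniou:20}), and then $\norm[T]{\PcurlT\uvec{\upsilon}_T}\lesssim\tnorm[\CURL,T]{\uvec{\upsilon}_T}$ together with the matching $h_F,h_E$ weights in the definition \eqref{eq:tnorm.curl.T}--\eqref{eq:tnorm.curl.F}; the remaining pieces (the $\bvec{\upsilon}_{\cvec{R},F}$, $\bvec{\upsilon}_{\cvec{R},F}^\perp$, $\upsilon_E$ components themselves) appear verbatim, with the right powers of $h_F,h_E$, in $\tnorm[\CURL,T]{\uvec{\upsilon}_T}^2$. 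The divergence case is entirely analogous, using $\norm[T]{\PdivT\uvec{w}_T}\lesssim\tnorm[\DIV,T]{\uvec{w}_T}$ (which should be recorded alongside \eqref{eq:bd.uCT.PcurlT}, proved by the same Cauchy--Schwarz/inverse/trace arguments applied to \eqref{eq:PdivT}) and the two terms of \eqref{eq:sdivT}.

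The reverse inequality $\tnorm[\CURL,T]{\uvec{\upsilon}_T}\lesssim\norm[\mu,\CURL,T]{\uvec{\upsilon}_T}$ is the substantive part, and it hinges on a polynomial-consistency argument. The key observations are: (i) $\PcurlT$ is polynomially consistent in the sense that $\Rproj{k-1}{F}(\PcurlT\uvec{\upsilon}_T)$, $\ROproj{k}{F}(\PcurlT\uvec{\upsilon}_T)$ and $\PcurlT\uvec{\upsilon}_T\cdot\tangent_E$ recover $\bvec{\upsilon}_{\cvec{R},F}$, $\bvec{\upsilon}_{\cvec{R},F}^\perp$ and $\upsilon_E$ respectively whenever $\uvec{\upsilon}_T$ is the local interpolate of a polynomial of degree $k$ — equivalently, the stabilisation $\mathrm{s}_{\CURL,T}$ vanishes on such interpolates (this follows from the commutation/consistency properties of $\PcurlT$ in \cite{Di-Pietro.Droniou.ea:20}); and (ii) the map $\uvec{\upsilon}_T\maps\PcurlT\uvec{\upsilon}_T$ together with the "differences" appearing in $\mathrm{s}_{\CURL,T}$ is injective on $\Xcurl[T]$ — indeed, if $\PcurlT\uvec{\upsilon}_T=\bvec 0$ and all the stabilisation differences vanish, then directly $\bvec{\upsilon}_{\cvec{R},F}=\Rproj{k-1}{F}\PcurlT\uvec{\upsilon}_T=\bvec 0$, $\bvec{\upsilon}_{\cvec{R},F}^\perp=\bvec 0$, $\upsilon_E=\PcurlT\uvec{\upsilon}_T\cdot\tangent_E=0$, and then using \eqref{eq:pcurl} (whose right-hand side involves $\hPcurlT$ built from $\CF,\trFt$, all determined by the now-vanishing data) one gets $\bvec{\upsilon}_{\cvec{R},T}=\ROproj{k}{T}\bvec{\upsilon}_{\cvec{R},T}^\perp$-type components $=\bvec 0$ as well. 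Hence $\uvec{\upsilon}_T\mapsto\big(\norm[T]{\PcurlT\uvec{\upsilon}_T}^2+\mathrm{s}_{\CURL,T}(\uvec{\upsilon}_T,\uvec{\upsilon}_T)\big)^{1/2}$ is a norm on the finite-dimensional space $\Xcurl[T]$, as is $\tnorm[\CURL,T]{{\cdot}}$, so they are equivalent on each fixed element; a scaling argument (mapping $T$ to a unit-diameter reference configuration, as in \cite[Lemma 1.28]{Di-Pietro.Droniou:20}, using the mesh-regularity-controlled geometry and the fact that all weights $h_F,h_E,h_T$ scale together) upgrades this to uniformity in $h$ over the regular mesh sequence. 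The divergence case is handled the same way, with $\PdivT$, \eqref{eq:PdivT}, \eqref{eq:sdivT} in place of the curl objects. \textbf{The main obstacle} is making the scaling/uniformity step rigorous: one must check that $\PcurlT$, $\PdivT$ and the stabilisation forms transform covariantly under the affine-type scaling with constants depending only on $k$ and the reference shape, so that the reference-element norm equivalence constant is inherited with only a mesh-regularity-dependent loss — this is routine but needs care with the homogeneity of each term (e.g.\ the $h_F,h_E^2$ weights in $\mathrm{s}_{\CURL,T}$ and $h_F$ in $\mathrm{s}_{\DIV,T}$ are precisely what is needed to match the scalings in $\tnorm[\CURL,T]{{\cdot}}$ and $\tnorm[\DIV,T]{{\cdot}}$).
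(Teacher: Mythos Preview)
Your plan for the direction $\norm[\mu,\CURL,T]{{\cdot}}\lesssim\tnorm[\CURL,T]{{\cdot}}$ (and the $\DIV$ analogue) matches the paper's argument essentially verbatim: boundedness of $\PcurlT$ from Proposition~\ref{prop:bd.curl}, triangle inequalities on the stabilisation terms, $\Leb^2$-boundedness of projectors, and discrete trace inequalities.

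For the reverse direction $\tnorm[\CURL,T]{{\cdot}}\lesssim\norm[\mu,\CURL,T]{{\cdot}}$ you take a genuinely different route. The paper does \emph{not} use a finite-dimensional norm-equivalence-plus-scaling argument. Instead it exploits an explicit identity: from \eqref{eq:pcurl}--\eqref{eq:PcurlT} one reads off $\Rproj{k-1}{T}(\PcurlT\uvec{\upsilon}_T)=\bvec{\upsilon}_{\cvec{R},T}$ and $\ROproj{k}{T}(\PcurlT\uvec{\upsilon}_T)=\bvec{\upsilon}_{\cvec{R},T}^\perp$, so the element contribution to $\tnorm[\CURL,T]{{\cdot}}$ is bounded directly by $\norm[T]{\PcurlT\uvec{\upsilon}_T}$. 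For the face and edge contributions the paper inserts the corresponding projections of $\PcurlT\uvec{\upsilon}_T$, so that the differences are exactly the stabilisation terms, and the remaining traces of $\PcurlT\uvec{\upsilon}_T$ are absorbed into $\norm[T]{\PcurlT\uvec{\upsilon}_T}$ by discrete trace inequalities. This is a fully constructive chain of inequalities with constants depending only on mesh regularity.

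Your ``main obstacle'' is more serious than you suggest. The step ``map $T$ to a unit-diameter reference configuration and use mesh regularity'' does not, by itself, yield a uniform constant: after rescaling to diameter $1$, the admissible polyhedra form an infinite family (varying number of faces, varying face shapes), and there is no single reference element to compare to. Mesh regularity bounds certain ratios but does not compactify the shape class in a way that makes the finite-dimensional equivalence constant uniform without further work. This is precisely why polytopal-method proofs avoid reference-element arguments and proceed, as the paper does, by direct manipulations whose constants come only from inverse and trace inequalities. Your injectivity argument is correct (indeed the identities above make it immediate), but it does not help with uniformity. If you want to salvage your approach you would essentially have to reprove the direct bound anyway; it is simpler to use the paper's identity for the element components and the add--subtract trick for the boundary ones.
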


\begin{proof}
We only prove \eqref{eq:equiv.Xcurl}, the equivalence \eqref{eq:equiv.Xdiv} being obtained similarly. The case $\clubsuit=h$ follows summing over $T$ the squares of the cases $\clubsuit=T$, so we focus on this latter situation. Let $T\in\Th$ and $\uvec{\upsilon}_T\in\Xcurl[T]$. The definition \eqref{eq:def.inner.curl} of the inner product to which $\norm[\mu,\CURL,T]{{\cdot}}$ corresponds and the $\Leb^2(F)^2$-orthogonality of $\Rproj{k-1}{F}(\PcurlT\uvec{\upsilon}_T)-\bvec{\upsilon}_{\cvec{R},F}\in\Roly{k-1}(F)$ and $\ROproj{k}{F}(\PcurlT\uvec{\upsilon}_T)-\bvec{\upsilon}_{\cvec{R},F}^\perp\in\Roly{k}(F)^\perp$ give
\begin{align}
\norm[\mu,\CURL,T]{\uvec{\upsilon}_T}^2={}&\mu_T\norm[T]{\PcurlT\uvec{\upsilon}_T}^2 + \mu_T\sum_{F\in\FT} h_F\norm[F]{(\Rproj{k-1}{F}+\ROproj{k}{F})\PcurlT\uvec{\upsilon}_T-(\bvec{\upsilon}_{\cvec{R},F}+\bvec{\upsilon}_{\cvec{R},F}^\perp)}^2\nonumber\\
&+\mu_T\sum_{E\in\ET}h_E^2\norm[E]{\PcurlT\uvec{\upsilon}_T\cdot\tangent_E-\upsilon_E}^2.
\label{eq:expr.norm.curl}
\end{align}

Applying \eqref{eq:PcurlT} with $\bvec{v}=\bvec{0}$ yields $\ROproj{k}{T}(\hPcurlT\uvec{\upsilon}_T)=\bvec{\upsilon}_{\cvec{R},T}^\perp$. Taking the projection $\ROproj{k}{T}$ of \eqref{eq:pcurl}, we infer that $\ROproj{k}{T}(\PcurlT\uvec{\upsilon}_T)=\bvec{\upsilon}_{\cvec{R},T}^\perp$. By \eqref{eq:pcurl}, we also have $\Rproj{k-1}{T}(\PcurlT\uvec{\upsilon}_T)=\bvec{\upsilon}_{\cvec{R},T}$. Hence, $\bvec{\upsilon}_{\cvec{R},T}+\bvec{\upsilon}_{\cvec{R},T}^\perp=(\Rproj{k-1}{T}+\ROproj{k}{T})\PcurlT\uvec{\upsilon}_T$. Using $h_F\approx h_E$ for all $E\in\EF$, we infer
\begin{align*}
\tnorm[\CURL,T]{\uvec{\upsilon}_T}^2\approx{}&\norm[T]{\bvec{\upsilon}_{\cvec{R},T}+\bvec{\upsilon}_{\cvec{R},T}^\perp}^2+\sum_{F\in\FT}h_F\norm[F]{\bvec{\upsilon}_{\cvec{R},F}+\bvec{\upsilon}_{\cvec{R},F}^\perp}^2+\sum_{E\in\ET}h_E^2\norm[E]{\upsilon_E}^2\\
\lesssim{}& \norm[T]{(\Rproj{k-1}{T}+\ROproj{k}{T})\PcurlT\uvec{\upsilon}_T}^2+\sum_{F\in\FT}h_F\norm[F]{\bvec{\upsilon}_{\cvec{R},F}+\bvec{\upsilon}_{\cvec{R},F}^\perp-(\Rproj{k-1}{F}+\ROproj{k}{F})\PcurlT\uvec{\upsilon}_T}^2\\
&+\sum_{E\in\ET}h_E^2\norm[E]{\upsilon_E-\PcurlT\uvec{\upsilon}_T\cdot\tangent_E}^2+\sum_{F\in\FT}h_F\norm[F]{(\Rproj{k-1}{F}+\ROproj{k}{F})\PcurlT\uvec{\upsilon}_T}^2\\
&+\sum_{E\in\ET}h_E^2\norm[E]{\PcurlT\uvec{\upsilon}_T\cdot\tangent_E}^2\\
\lesssim{}&\norm[T]{\PcurlT\uvec{\upsilon}_T}^2+\sum_{F\in\FT}h_F\norm[F]{\bvec{\upsilon}_{\cvec{R},F}+\bvec{\upsilon}_{\cvec{R},F}^\perp-(\Rproj{k-1}{F}+\ROproj{k}{F})\PcurlT\uvec{\upsilon}_T}^2\\
&+\sum_{E\in\ET}h_E^2\norm[E]{\upsilon_E-\PcurlT\uvec{\upsilon}_T\cdot\tangent_E}^2,
\end{align*}
where the first inequality follows introducing $(\Rproj{k-1}{F}+\ROproj{k}{F})\PcurlT\uvec{\upsilon}_T$ in the face terms, $\PcurlT\uvec{\upsilon}_T\cdot\tangent_E$ in the edge terms, and using triangle inequalities, while the second inequality is obtained invoking the boundedness of the $\Leb^2$-projectors $(\Rproj{k-1}{T}+\ROproj{k}{T})$ and $(\Rproj{k-1}{F}+\ROproj{k}{F})$, and discrete trace inequalities on each face $F\in\FT$ and edge $E\in\ET$. Recalling \eqref{eq:expr.norm.curl}, we infer $\tnorm[\CURL,T]{\uvec{\upsilon}_T}^2\lesssim \norm[\mu,\CURL,T]{\uvec{\upsilon}_T}^2$.

To establish the converse estimate, we start from \eqref{eq:expr.norm.curl} and use triangle inequalities to write
\begin{align*}
\norm[\mu,\CURL,T]{\uvec{\upsilon}_T}^2\lesssim{}& \norm[T]{\PcurlT\uvec{\upsilon}_T}^2 + \sum_{F\in\FT} h_F\norm[F]{(\Rproj{k-1}{F}+\ROproj{k}{F})\PcurlT\uvec{\upsilon}_T}^2+\sum_{E\in\ET}h_E^2\norm[E]{\PcurlT\uvec{\upsilon}_T\cdot\tangent_E}^2\\
&+\sum_{F\in\FT} h_F\norm[F]{\bvec{\upsilon}_{\cvec{R},F}+\bvec{\upsilon}_{\cvec{R},F}^\perp}^2+\sum_{E\in\ET}h_E^2\norm[E]{\upsilon_E}^2\\
\lesssim{}& \norm[T]{\PcurlT\uvec{\upsilon}_T}^2+\tnorm[\CURL,T]{\uvec{\upsilon}_T}^2\lesssim \tnorm[\CURL,T]{\uvec{\upsilon}_T}^2,
\end{align*}
where the second inequality follows from the boundedness of $\Leb^2$-projectors $(\Rproj{k-1}{F}+\ROproj{k}{F})$ together with discrete trace inequalities, while the conclusion is obtained invoking \eqref{eq:bd.uCT.PcurlT}. \end{proof}

\subsection{Preliminaries to the Poincar\'e inequalities for $\uCh$}

\begin{lemma}[Poincar\'e inequality for $\uCh$ with averages of edge unknowns]\label{lem:poincare.with.edges}
For any $\uvec{\upsilon}_h\in\Xcurl[h]$, there exists $\uvec{\zeta}_h\in \Xcurl[h]$ such that
\begin{equation}\label{eq:dec.zeta.ker}
\uvec{\upsilon}_h+\uvec{\zeta}_h\in\ker \Ch\quad\mbox{ and }\quad \norm[\mu,\CURL,h]{\uvec{\zeta}_h}\lesssim 
\left(\sum_{T\in\Th}h_T^2\tnorm[\DIV,T]{\uCT\uvec{\upsilon}_h}^2\right)^{\frac12} + \left(\sum_{E\in\Eh}h_E^2|E|(\overline{\upsilon}_E)^2\right)^{\frac12},
\end{equation}
where, for any $E\in\Eh$, $\overline{\upsilon}_E$ is the average value of $\upsilon_E$ on $E$.

As a consequence, if $(\Ker\uCh)^\perp$ is the ortho\-gonal complement of $\Ker\uCh$ in $\Xcurl[h]$ for an inner product whose norm is, uniformly in $h$, equivalent to $\norm[\mu,\CURL,h]{{\cdot}}$, it holds
\begin{equation}\label{eq:est.face.el.Xcurl}
\norm[\mu,\CURL,h]{\uvec{\upsilon}_h}\lesssim \tnorm[\DIV,h]{\uCh\uvec{\upsilon}_h} + \left(
\sum_{E\in\Eh}h_E^2|E|(\overline{\upsilon}_E)^2\right)^{\frac12}\qquad\forall \uvec{\upsilon}_h\in(\Ker\uCh)^\perp.
\end{equation}
\end{lemma}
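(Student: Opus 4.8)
The plan is to reduce the estimate \eqref{eq:dec.zeta.ker} to the local exactness of the DDR sequence (\cite[Theorem 17]{Di-Pietro.Droniou.ea:20}) combined with the topological assembly used in the proof of Theorem \ref{thm:exactness}, tracking norms throughout. First I would, for each $T\in\Th$, let $\uvec{w}_T\coloneq\uCT\uvec{\upsilon}_T\in\Image\uCT=\Ker\DT$ and use \cite[Theorem 17]{Di-Pietro.Droniou.ea:20} to obtain a local pre-image; but rather than an arbitrary pre-image I would select the one that is $\norm[\mu,\CURL,T]{{\cdot}}$-minimal on $(\Ker\uCT)^\perp$, so that a \emph{local} Poincar\'e inequality for $\uCT$ gives $\norm[\mu,\CURL,T]{{\cdot}}\lesssim h_T\tnorm[\DIV,T]{\uCT\uvec{\upsilon}_T}$ for that pre-image (this local Poincar\'e inequality follows by the usual scaling/compactness argument on a reference element, exactly as one proves local Poincar\'e inequalities for HHO, and uses the equivalence of norms \eqref{eq:equiv.Xcurl} and the boundedness \eqref{eq:bd.uCT.PcurlT}). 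Call these local pre-images $\uvec{\upsilon}_T^\flat$; by construction $\uCT(\uvec{\upsilon}_T-\uvec{\upsilon}_T^\flat)=0$ for each $T$, i.e.\ $\uvec{\upsilon}_T-\uvec{\upsilon}_T^\flat\in\Ker\uCT=\Image\uGT[T]$.

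Next I would run the inductive gluing procedure of Theorem \ref{thm:exactness} (Operations A and B) applied to the family $(\uvec{\upsilon}_T^\flat)_{T\in\Th}$, which are all local pre-images through $\uCT$ of the single-valued datum $\uCh\uvec{\upsilon}_h$; this produces a genuine $\uvec{\upsilon}_h^\flat\in\Xcurl[h]$ with $\uCh\uvec{\upsilon}_h^\flat=\uCh\uvec{\upsilon}_h$. The correction at each gluing step is the image under $\uGT$ of an element of $\Xgrad$ built from a boundary potential $q_{\partial F}$ that interpolates (the tangential jump of) the local curl pre-images along edges and is extended linearly/by zero; since $q_{\partial F}$ is a piecewise polynomial on an edge skeleton whose values are $\Leb^2(E)$-controlled by the edge components $\upsilon_E$ of the neighbouring $\uvec{\upsilon}_T^\flat$, and since $\GE$ scales like $h_E^{-1}$, one controls the $\Xgrad$-norm of the correction, hence (via the boundedness of $\uGh$, which follows from the same arguments as Proposition \ref{prop:bd.curl}) the $\norm[\mu,\CURL,h]{{\cdot}}$-norm of each corrective term, by $(\sum_T h_T^2\tnorm[\DIV,T]{\uCT\uvec{\upsilon}_T}^2)^{1/2}$ plus the contribution of the averages $\overline{\upsilon}_E$ of the original edge unknowns (the averages appear precisely because, in Operation B, $q_{\partial F}$ can only be normalised to vanish on the already-glued connected path, so its global extension carries the edge averages of the gluing edges). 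Setting $\uvec{\zeta}_h\coloneq\uvec{\upsilon}_h^\flat-\uvec{\upsilon}_h$, which lies in $\Xcurl[h]$ and satisfies $\uvec{\upsilon}_h+\uvec{\zeta}_h=\uvec{\upsilon}_h^\flat\in\Ker\uCh$, gives \eqref{eq:dec.zeta.ker} after a triangle inequality $\norm[\mu,\CURL,h]{\uvec{\zeta}_h}\le\norm[\mu,\CURL,h]{\uvec{\upsilon}_h^\flat}+\norm[\mu,\CURL,h]{\uvec{\upsilon}_h}$ — wait, that does not close; instead one writes $\uvec{\zeta}_h$ as the telescoping sum of the corrective gradient terms plus $\sum_T(\uvec{\upsilon}_T^\flat-\uvec{\upsilon}_T)$-type adjustments, each of which is controlled as above, so $\norm[\mu,\CURL,h]{\uvec{\zeta}_h}$ is bounded by the right-hand side of \eqref{eq:dec.zeta.ker} directly.

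Finally I would deduce \eqref{eq:est.face.el.Xcurl}: for $\uvec{\upsilon}_h\in(\Ker\uCh)^\perp$, the element $\uvec{\zeta}_h$ from \eqref{eq:dec.zeta.ker} satisfies $\uvec{\upsilon}_h=-(\uvec{\zeta}_h-(\uvec{\upsilon}_h+\uvec{\zeta}_h))$ with $\uvec{\upsilon}_h+\uvec{\zeta}_h\in\Ker\uCh$; projecting orthogonally onto $(\Ker\uCh)^\perp$ (using the inner product whose norm is equivalent to $\norm[\mu,\CURL,h]{{\cdot}}$) and using that $\uvec{\upsilon}_h$ is already in that complement, one gets $\norm[\mu,\CURL,h]{\uvec{\upsilon}_h}\lesssim\norm[\mu,\CURL,h]{\uvec{\zeta}_h}$, and then \eqref{eq:dec.zeta.ker} together with the bound $\sum_T h_T^2\tnorm[\DIV,T]{\uCT\uvec{\upsilon}_h}^2\lesssim\tnorm[\DIV,h]{\uCh\uvec{\upsilon}_h}^2$ (which holds since $h_T\lesssim h_F$ by mesh regularity, so the $h_T^2$ weights are absorbed into the $h_F$ weights of $\tnorm[\DIV,T]{{\cdot}}$) yields \eqref{eq:est.face.el.Xcurl}. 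The main obstacle is the norm bookkeeping in the inductive step: one must verify that the linear/zero extensions of $q_{\partial F}$ introduced in Operations A and B do not lose powers of $h$ and that the total number of gluing operations affecting a given element is bounded (uniformly in $h$) by the mesh regularity, so that summing the corrective terms over all operations does not produce an $h$-dependent constant; this is where mesh regularity and the locality of the operations (each correction is supported near one face) must be used carefully.
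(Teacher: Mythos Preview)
Your approach is fundamentally different from the paper's, and it has genuine gaps.

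The paper's proof is direct and purely local. One sets $\zeta_E\coloneq-\overline{\upsilon}_E$ on every edge (this explicit choice is precisely where the edge-average term in \eqref{eq:dec.zeta.ker} originates), takes all $\Roly{}^\perp$-components of $\uvec{\zeta}_h$ equal to zero, and then determines $\bvec{\zeta}_{\cvec{R},F}\in\Roly{k-1}(F)$ by enforcing $\CF(\uvec{\upsilon}_F+\uvec{\zeta}_F)=0$: the constant test function in \eqref{eq:CF} is satisfied automatically because $\int_E(\upsilon_E+\zeta_E)=0$, and the remaining test functions in $\Poly{0,k}(F)$ determine $\bvec{\zeta}_{\cvec{R},F}$ uniquely since $\VROT_F:\Poly{0,k}(F)\to\Roly{k-1}(F)$ is an isomorphism. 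Similarly, $\bvec{\zeta}_{\cvec{R},T}\in\Roly{k-1}(T)$ is fixed by enforcing $\GOproj{k}{T}\CT(\uvec{\upsilon}_T+\uvec{\zeta}_T)=0$, the $\Gproj{k-1}{T}$-part vanishing automatically once the face curls do. Each local solve is bounded by Cauchy--Schwarz, discrete trace, and Poincar\'e--Wirtinger/Lemma~\ref{lem:CURL.iso}. No topological assembly, no accumulation. Your derivation of \eqref{eq:est.face.el.Xcurl} from \eqref{eq:dec.zeta.ker} via orthogonal projection is correct and matches the paper.

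Your route through the gluing procedure of Theorem~\ref{thm:exactness} has several problems. First, a logic slip: you build $\uvec{\upsilon}_h^\flat$ with $\uCh\uvec{\upsilon}_h^\flat=\uCh\uvec{\upsilon}_h$ and then assert $\uvec{\upsilon}_h^\flat\in\Ker\uCh$; the correct choice is $\uvec{\zeta}_h=-\uvec{\upsilon}_h^\flat$. Second, and more seriously, your construction depends only on $\uCh\uvec{\upsilon}_h$, never on $\uvec{\upsilon}_h$ itself, so the averages $\overline{\upsilon}_E$ of the \emph{original} edge unknowns cannot arise in any bound on $\norm[\mu,\CURL,h]{\uvec{\upsilon}_h^\flat}$; your explanation for their appearance (normalisation of $q_{\partial F}$ in Operation~B) conflates edge values of the local pre-images with those of $\uvec{\upsilon}_h$. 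If your argument closed, it would give \eqref{eq:dec.zeta.ker} without the second term, i.e.\ the full Poincar\'e inequality \eqref{eq:poincare.Ch} directly, making Assumption~\ref{assum:poincare.edges} and Proposition~\ref{prop:poincare.curl.simply.connected} superfluous. Third, the obstacle you flag is not settled: in each Operation~A or B the correction on the new element depends on the already-assembled value on the shared face, which carries all prior corrections. For chains of tetrahedra (where any two faces share an edge) this is a genuinely cumulative dependence over $O(\card\Th)$ steps; mesh regularity bounds the number of neighbours per element, not the length of the assembly chain. Fourth, a local Poincar\'e for $\uCT$ cannot be obtained by ``scaling/compactness on a reference element'', since general polyhedral elements admit no reference element.
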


\begin{proof}
Let $\uvec{\upsilon}_h\in\Xcurl[h]$. The vector $\uvec{\zeta}_h$ in \eqref{eq:dec.zeta.ker} is constructed under the form 
\[
\uvec{\zeta}_h=\big((\bvec{\zeta}_{\cvec{R},T},\bvec{0})_{T\in\Th},(\bvec{\zeta}_{\cvec{R},F},\bvec{0})_{F\in\Fh},(\zeta_E)_{E\in\Eh}\big),
\]
starting with the edge components, then constructing the face components, and finally the element components.
We fix $\zeta_E=-\overline{\upsilon}_E$ for all $E\in\Eh$. This readily gives, denoting by $\mathcal S_h(\uvec{\upsilon}_h)$ the right-hand side of \eqref{eq:dec.zeta.ker},
\begin{equation}\label{eq:est.zetaE}
\sum_{E\in\Eh}h_E^2\norm[E]{\zeta_E}^2\le\mathcal S_h(\uvec{\upsilon}_h)^2.
\end{equation}

Recalling \eqref{eq:CF} and enforcing, for all $F\in\Fh$, $\CF(\uvec{\upsilon}_F+\uvec{\zeta}_F)=0$, we obtain
\begin{equation}\label{eq:def.zetaF}
\int_F (\bvec{\upsilon}_{\cvec{R},F}+\bvec{\zeta}_{\cvec{R},F})\cdot\VROT_Fq-\sum_{E\in\EF}\omega_{FE}\int_E(\upsilon_E+\zeta_E)q=0
\qquad\forall q\in\Poly{k}(F).
\end{equation}
For all $E\in\Eh$, by construction of $\zeta_E$, we have $\int_E(\upsilon_E+\zeta_E)=0$. Hence, the above equation is automatically satisfied if $q$ is constant, and we only have to impose it for $q\in\Poly{0,k}(F)$. Since $\VROT_F:\Poly{0,k}(F)\xrightarrow{\cong}\Roly{k-1}(F)$ is an isomorphism, this defines $\bvec{\zeta}_{\cvec{R},F}\in\Roly{k-1}(F)$ uniquely. Moreover, by definition \eqref{eq:CF} of $\CF\uvec{\upsilon}_F$, the terms involving $\bvec{\upsilon}_{\cvec{R},F}$ and $\upsilon_E$ can be replaced in \eqref{eq:def.zetaF} and we have
\begin{equation}\label{eq:def.zetaF.2}
\int_F \bvec{\zeta}_{\cvec{R},F}\cdot\VROT_Fq=\sum_{E\in\EF}\omega_{FE}\int_E\zeta_Eq - \int_F \CF\uvec{\upsilon}_Fq
\qquad\forall q\in\Poly{k}(F).
\end{equation}
Working as around \eqref{eq:est.r} with $\bvec{\sigma}=\bvec{\zeta}_{\cvec{R},F}$, which satisfies $\Rproj{k}{F}\bvec{\sigma}=
\bvec{\zeta}_{\cvec{R},F}$, we find $q\in\Poly{k}(F)$ such that $\VROT_Fq=\bvec{\zeta}_{\cvec{R},F}$ and $\norm[F]{q}
\lesssim h_F\norm[F]{\bvec{\zeta}_{\cvec{R},F}}$. Plugging this $q$ into \eqref{eq:def.zetaF.2} and using Cauchy--Schwarz and discrete trace inequalities leads to
\[
\norm[F]{\bvec{\zeta}_{\cvec{R},F}}\lesssim \left(\sum_{E\in\EF}h_E^2\norm[E]{\zeta_E}^2\right)^{\frac12}h_F^{-\frac12}
+h_F\norm[F]{\CF\uvec{\upsilon}_F}.
\]
Squaring this inequality, multiplying by $h_F$, summing over $F\in\Fh$, and using $\card\left\{G\in \Fh\st E\in\mathcal{E}_{G}\right\}\lesssim 1$, we infer that
\begin{equation}
\sum_{F\in\Fh}h_F\norm[F]{\bvec{\zeta}_{\cvec{R},F}}^2\lesssim \sum_{F\in\Fh}h_F^3\norm[F]{\CF\uvec{\upsilon}_F}^2+\sum_{E\in\Eh}h_E^2\norm[E]{\zeta_E}^2\lesssim \mathcal S_h(\uvec{\upsilon}_h)^2,
\label{eq:est.zetaF}
\end{equation}
where the conclusion follows from $h_F^2\le h_T^2$, \eqref{eq:est.zetaE} and the definition of $\tnorm[\DIV,T]{\uCh\uvec{\upsilon}_h}$. 

By \cite[Eq. (5.22)]{Di-Pietro.Droniou.ea:20}, and recalling that $\CF(\uvec{\upsilon}_F+\uvec{\zeta}_F)=0$ for all $F\in\Fh$, we have, for all $T\in\Th$,
\[
\int_T \Gproj{k-1}{T}\CT(\uvec{\upsilon}_T+\uvec{\zeta}_T)\cdot\GRAD r=\sum_{F\in\FT}\omega_{TF}\int_F\CF(\uvec{\upsilon}_F+\uvec{\zeta}_F)r=0\qquad\forall r\in\Poly{k}(T).
\]
This shows that, whatever the choice of $(\bvec{\zeta}_{\cvec{R},T})_{T\in\Th}$, we have $\Gproj{k-1}{T}\CT(\uvec{\upsilon}_T+\uvec{\zeta}_T)=0$ for all $T\in\Th$. To ensure that $\uCh(\uvec{\upsilon}_h+\uvec{\zeta}_h)=0$ we therefore only have to show that $\GOproj{k}{T}\CT(\uvec{\upsilon}_T+\uvec{\zeta}_T)=0$ for all $T\in\Th$, which reduces to, recalling the definition \eqref{eq:CT} of $\CT$,
\[
\int_T(\bvec{\upsilon}_{\cvec{R},T}+\bvec{\zeta}_{\cvec{R},T})\cdot \CURL\bvec{\tau}+\sum_{F\in\FT}\omega_{TF}\int_F\trFt(\uvec{\upsilon}_F+\uvec{\zeta}_F)\cdot(\bvec{\tau}\times\normal_F)=0\qquad\forall\bvec{\tau}\in\Goly{k}(T)^\perp.
\]
Since $\CURL:\Goly{k}(T)^\perp\xrightarrow{\cong}\Roly{k-1}(T)$ is an isomorphism, this equation defines a unique $\bvec{\zeta}_{\cvec{R},T}\in\Roly{k-1}(T)$. Moreover, using the definition \eqref{eq:CT} of $\CT\uvec{\upsilon}_T$ to replace the terms involving $\bvec{\upsilon}_{\cvec{R},T}$ and $(\trFt\uvec{\upsilon}_F)_{F\in\FT}$, we have
\[
\int_T\bvec{\zeta}_{\cvec{R},T}\cdot \CURL\bvec{\tau}=-\sum_{F\in\FT}\omega_{TF}\int_F\trFt\uvec{\zeta}_F\cdot(\bvec{\tau}\times\normal_F)-\int_T \GOproj{k}{T}\CT\uvec{\upsilon}_T\cdot\bvec{\tau}\qquad\forall\bvec{\tau}\in\Goly{k}(T)^\perp.
\]
Invoking Lemma \ref{lem:CURL.iso}, we select $\bvec{\tau}\in\Goly{k}(T)^\perp$ such that $\CURL\bvec{\tau}=\bvec{\zeta}_{\cvec{R},T}$ and $\norm[T]{\bvec{\tau}}\lesssim h_T\norm[T]{\bvec{\zeta}_{\cvec{R},T}}$. Using Cauchy--Schwarz and discrete trace inequalities, we obtain
\[
\norm[T]{\bvec{\zeta}_{\cvec{R},T}}\lesssim \left(\sum_{F\in\FT}h_F\norm[F]{\trFt\uvec{\zeta}_F}^2\right)^{\frac12} + h_T\norm[T]{\GOproj{k}{T}\CT\uvec{\upsilon}_T}.
\]
Squaring, summing over $T\in\Th$, using the definition of $\uCh$, the boundedness of $\trFt$ stated in \eqref{eq:bd.CF.trFt}, and recalling \eqref{eq:est.zetaF} and \eqref{eq:est.zetaE}, we infer 
\begin{equation}\label{eq:est.zetaT}
\sum_{T\in\Th}\norm[T]{\bvec{\zeta}_{\cvec{R},T}}^2\lesssim \mathcal S_h(\uvec{\upsilon}_h)^2.
\end{equation}
The proof of \eqref{eq:dec.zeta.ker} is concluded gathering \eqref{eq:est.zetaE}, \eqref{eq:est.zetaF} and \eqref{eq:est.zetaT}, and using the norm equivalence \eqref{eq:equiv.Xcurl}.

\medskip

We now establish \eqref{eq:est.face.el.Xcurl}. Let $N(\cdot)$ be the norm associated with the inner product for which $\uvec{\upsilon}_h\perp\Ker\uCh$, and $P^\perp:\Xcurl[h]\to(\Ker\uCh)^\perp$ be the orthogonal projector for this inner product.
Taking $\uvec{\zeta}_h$ given by \eqref{eq:dec.zeta.ker}, we have $P^\perp(\uvec{\upsilon}_h+\uvec{\zeta}_h)=0$, which gives $\uvec{\upsilon}_h=-P^\perp\uvec{\zeta}_h$ since $\uvec{\upsilon}_h\in(\Ker\uCh)^\perp$. Hence, $N(\uvec{\upsilon}_h)\le N(\uvec{\zeta}_h)$ and thus, by the assumed equivalence of $N(\cdot)$, we infer $\norm[\mu,\CURL,h]{\uvec{\upsilon}_h}\lesssim\tnorm[\CURL,h]{\uvec{\zeta}_h}$. The proof is completed by noticing that the right-hand side of \eqref{eq:dec.zeta.ker} is bounded above by the right-hand side of \eqref{eq:est.face.el.Xcurl}, since $h_T\lesssim 1$. \end{proof}

Lemma \ref{lem:poincare.with.edges} highlights the interest of Assumption \ref{assum:poincare.edges} when establishing a Poincar\'e inequality for $\uCh$. The following proposition gives a situation when this assumption is satisfied.

\begin{proposition}[Assumption \ref{assum:poincare.edges} on simply connected domains]\label{prop:poincare.curl.simply.connected}
If $\Omega$ is simply connected, then Assumption \ref{assum:poincare.edges} holds with $\alpha$ not depending on $h$.
\end{proposition}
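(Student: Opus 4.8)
### Proof proposal for Proposition~\ref{prop:poincare.curl.simply.connected}

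The plan is to exhibit an explicit inner product on $\Xcurl[h]$ and to prove the bound \eqref{eq:poincare.edges} by a compactness/contradiction argument relying on the de Rham sequence for the continuous problem and the (uniform) discrete objects already constructed. First I would let the inner product be the one induced by $\norm[\mu,\CURL,h]{{\cdot}}$ itself (it is trivially equivalent to itself), so that $(\Ker\uCh)^\perp$ is its $\norm[\mu,\CURL,h]{{\cdot}}$-orthogonal complement. The key point to keep in mind throughout is that, by Lemma~\ref{lem:poincare.with.edges}, we already control the full norm $\norm[\mu,\CURL,h]{\uvec{\upsilon}_h}$ on $(\Ker\uCh)^\perp$ by $\tnorm[\DIV,h]{\uCh\uvec{\upsilon}_h}$ \emph{plus} the edge-average term; hence, once \eqref{eq:poincare.edges} is established, the full Poincaré inequality for $\uCh$ follows. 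So the whole task is to absorb the edge-average term.

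The main step is to control $\big(\sum_{E\in\Eh}h_E^2|E|(\overline{\upsilon}_E)^2\big)^{1/2}$ by $\norm[\DIV,h]{\uCh\uvec{\upsilon}_h}$ for $\uvec{\upsilon}_h\in(\Ker\uCh)^\perp$. I would proceed by contradiction: suppose there exist a sequence of meshes $\Mh$ in the regular family and $\uvec{\upsilon}_h\in(\Ker\uCh)^\perp$ with $\sum_{E}h_E^2|E|(\overline{\upsilon}_E)^2=1$ and $\norm[\DIV,h]{\uCh\uvec{\upsilon}_h}\to 0$. Using \eqref{eq:est.face.el.Xcurl} this gives $\norm[\mu,\CURL,h]{\uvec{\upsilon}_h}\lesssim 1$ uniformly. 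Now I would attach to each $\uvec{\upsilon}_h$ the reconstructed vector field $\Pcurlh\uvec{\upsilon}_h\in\Leb^2(\Omega)^3$ (piecewise the local $\PcurlT\uvec{\upsilon}_T$); by \eqref{eq:bd.uCT.PcurlT} and the norm equivalence \eqref{eq:equiv.Xcurl} this family is bounded in $\Leb^2(\Omega)^3$ and, since $\uCh\uvec{\upsilon}_h\to 0$ in the discrete sense and $\Pdivh(\uCh\uvec{\upsilon}_h)=\Ch\uvec{\upsilon}_h$ by \eqref{eq:PdivT.uCT=CT}, its (piecewise) curl goes to zero. A compactness argument—extracting a subsequence converging weakly in $\Leb^2$, with the broken-curl tending to $0$—would produce a limit $\bvec{\upsilon}\in\Hcurl{\Omega}$ with $\CURL\bvec{\upsilon}=\bvec 0$. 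Simple connectedness of $\Omega$ means the first Betti number is zero, so $\Ker\CURL=\Image\GRAD$ in $\Hcurl{\Omega}$, i.e.\ $\bvec{\upsilon}=\GRAD\phi$ for some $\phi\in\Hil^1(\Omega)$. Then, interpolating $\phi$ into $\Xgrad[h]$ and using $\Image\uGh\subset\Ker\uCh$, one shows that $\uvec{\upsilon}_h$ is, up to a vanishing remainder, an element of $\Ker\uCh$; combined with $\uvec{\upsilon}_h\perp\Ker\uCh$, this forces $\norm[\mu,\CURL,h]{\uvec{\upsilon}_h}\to 0$, whence (by the consistency properties of the edge unknowns built into the norm equivalence and the fact that $\overline{\upsilon}_E$ is controlled by $h_E^{-1}\norm[E]{\upsilon_E}\lesssim h_E^{-1}\tnorm[\CURL,h]{\uvec{\upsilon}_h}$ after rescaling) $\sum_{E}h_E^2|E|(\overline{\upsilon}_E)^2\to 0$, contradicting the normalisation.

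A cleaner alternative that avoids the compactness machinery—and which I would actually prefer to write down—is the following direct construction. Given $\uvec{\upsilon}_h$, consider the scalar field on the edge skeleton whose restriction to each $E$ is the constant $\overline{\upsilon}_E$; more precisely, build $\underline{r}_h\in\Xgrad[h]$ by choosing the vertex values so that the piecewise-affine interpolant along edges has edge derivatives matching the averages $\overline{\upsilon}_E$. This requires solving, on the (connected) edge graph of $\Mh$, a discrete "potential" problem: because $\Omega$ is simply connected, the edge graph's first homology is generated by face boundaries, and the compatibility condition for solvability is exactly that the sum of $\overline{\upsilon}_E$ (with signs $\omega_{FE}$) around each face boundary equals a quantity controlled by $\CF\uvec{\upsilon}_F$—which is the content of $\norm[\DIV,h]{\uCh\uvec{\upsilon}_h}$ being small. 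One then shows $\norm[\mu,\CURL,h]{\uGh\underline{r}_h}\lesssim\big(\sum_{E}h_E^2|E|(\overline{\upsilon}_E)^2\big)^{1/2}$ using \eqref{eq:GE}, the boundedness estimates of Proposition~\ref{prop:bd.curl}, and mesh regularity, and that the edge-average part of $\uvec{\upsilon}_h-\uGh\underline{r}_h$ vanishes. Since $\uGh\underline{r}_h\in\Ker\uCh$ and $\uvec{\upsilon}_h\perp\Ker\uCh$, Pythagoras gives $\norm[\mu,\CURL,h]{\uvec{\upsilon}_h}\le\norm[\mu,\CURL,h]{\uvec{\upsilon}_h-\uGh\underline{r}_h}$, and the edge-average term on the right of \eqref{eq:est.face.el.Xcurl} is then controlled, after reabsorption, by $\norm[\DIV,h]{\uCh\uvec{\upsilon}_h}$ alone.

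The hard part will be the passage through the topology: making rigorous the claim that simple connectedness of $\Omega$ lets one solve the discrete potential problem on the edge skeleton with a compatibility condition that is quantitatively controlled by $\uCh\uvec{\upsilon}_h$, \emph{uniformly in $h$}. In the compactness approach the analogous difficulty is hidden in the step identifying the $\Leb^2$-limit as a gradient and transferring this back to the discrete level with uniform constants; in the direct approach it is the graph-theoretic solvability argument and the $h$-uniform bound on the constructed potential. Either way, all the remaining estimates are routine applications of the boundedness results of Section~\ref{sec:inf-sup}, the norm equivalences \eqref{eq:equiv.Xcurl}--\eqref{eq:equiv.Xdiv}, and standard inverse/trace inequalities under mesh regularity.
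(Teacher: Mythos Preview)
Your proposal is not a proof but two sketches, and both leave the essential difficulty unresolved. In the compactness route, the step ``interpolating $\phi$ into $\Xgrad[h]$'' already fails: the interpolator $\uIgrad[h]$ uses point values at vertices, so it requires $\phi\in C^0(\overline{\Omega})$, while a limit obtained from weak $\Leb^2$-compactness of the reconstructions only gives $\phi\in\Hil^1(\Omega)$ in three dimensions. Even setting this aside, you would need a commuting-interpolator estimate of the form $\norm[\mu,\CURL,h]{\uvec{\upsilon}_h-\uGh\uIgrad[h]\phi}\to 0$, which is precisely the kind of uniform approximation result the paper does not have (and does not need). In the ``cleaner alternative'', the system you want to solve on the edge skeleton---vertex values $r_V$ with $(r_{V_2}-r_{V_1})/|E|=\overline{\upsilon}_E$ on each edge---is over-determined; the compatibility defect around a face boundary is not zero but only controlled by $\CF\uvec{\upsilon}_F$, and bounding the resulting approximate potential \emph{uniformly in $h$} is exactly the Poincar\'e-type statement you are trying to prove. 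You acknowledge this yourself in the last paragraph, which means the argument is circular.

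The paper's proof follows a completely different and much more concrete strategy that you have missed. It does not attempt to build a potential or pass to a continuous limit. Instead, it \emph{reduces to the lowest-order case}: the quantity $\sum_{E}h_E^2|E|(\overline{\upsilon}_E)^2$ is precisely a norm of $\Pi^0_{\CURL,h}\uvec{\upsilon}_h$ in the $k=0$ edge space, and a Poincar\'e inequality for the lowest-order discrete curl on simply connected domains is already available in the literature (Bonelle--Ern). The whole work is then to ensure that $\Pi^0_{\CURL,h}\uvec{\upsilon}_h$ lands in the correct orthogonal complement $(\Ker\CURLz)^{\perp_{\Eh}}$ so that this known inequality applies. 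This is achieved by exploiting the freedom in Assumption~\ref{assum:poincare.edges}: the paper does \emph{not} take the natural inner product $(\cdot,\cdot)_{\mu,\CURL,h}$ as you do, but constructs a tailored one of the form $[\uvec{\upsilon}_h,\uvec{\sigma}_h]=\langle\Pi^0_{\CURL,h}\uvec{\upsilon}_h,\Pi^0_{\CURL,h}\uvec{\sigma}_h\rangle_{\Eh}+(\,(\Id-\ext\Pi^0_{\CURL,h})\uvec{\upsilon}_h,(\Id-\ext\Pi^0_{\CURL,h})\uvec{\sigma}_h\,)_{\mu,\CURL,h}$, where $\ext$ is an extension operator from the $k=0$ edge space back into $\Xcurl[h]$ that sends $\Ker\CURLz$ into $\Ker\uCh$. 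With this inner product, $\uvec{\upsilon}_h\perp\Ker\uCh$ automatically forces $\Pi^0_{\CURL,h}\uvec{\upsilon}_h\perp_{\Eh}\Ker\CURLz$, and the cited lowest-order inequality finishes the job. The construction of $\ext$ and the verification that the new inner product is uniformly equivalent to $\norm[\mu,\CURL,h]{{\cdot}}$ are the technical content, and they reuse the same mechanism as in Lemma~\ref{lem:poincare.with.edges}.
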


\begin{proof}
  We have to prove \eqref{eq:poincare.edges}.
  To this end, we leverage the discrete Poincar\'e inequality proved in \cite[Lemma 2.2]{Bonelle.Ern:15} in the context of Compatible Discrete Operators, which are linked to the DDR sequence for $k=0$.
    We next recall this result.
  Let
\[
\Xcurlz[h] \coloneq \left\{(\nu_E)_{E\in\Eh}\,:\,\nu_E\in\Real\quad\forall E\in\Eh\right\}
\quad\text{ and }\quad
\Xdivz[h]\coloneq\left\{(r_F)_{F\in\Fh}\,:\,r_F\in\Real\quad\forall F\in\Fh\right\}
\]
be the spaces of edge and face constant values, respectively.
These spaces correspond to $\underline{\bvec{X}}_{\CURL,h}^0$ and $\underline{\bvec{X}}_{\DIV,h}^0$, respectively, but are expressed in a notation more similar to \cite{Bonelle.Ern:15} to help the reader navigate this reference.
We define the projectors $\Pi^0_{\CURL,h}:\Xcurl[h]\to\Xcurlz[h]$ and $\Pi^0_{\DIV,h}:\Xdiv[h]\to\Xdivz[h]$ such that
\[
\Pi^0_{\CURL,h}\uvec{\upsilon}_h\coloneq(\overline{\upsilon}_E)_{E\in\Eh}\quad\forall \uvec{\upsilon}_h\in\Xcurl[h]
\quad\mbox{ and }\quad \Pi^0_{\DIV,h}\uvec{w}_h\coloneq(\overline{w}_F)_{F\in\Fh}\quad\forall\uvec{w}_h\in\Xdiv[h],
\]
where $\overline{w}_F$ is the average of $w_F$ on $F$. Let $\CURLz:\Xcurlz[h]\to\Xdivz[h]$ be the discrete curl defined by
\begin{equation}\label{eq:CURLz}
  \CURLz\underline{\nu}_h\coloneq\Big(-\frac{1}{|F|}\sum_{E\in\EF}\omega_{FE}|E|\nu_E\Big)_{F\in\Fh}\qquad\forall \underline{\nu}_h=(\nu_E)_{E\in\Eh}\in\Xcurlz[h],
\end{equation}
with $|F|$ denoting the area of $F\in\Fh$ and $|E|$ the length of $E$.
Making $q=1$ in \eqref{eq:CF} shows that the following commutation property holds:
\begin{equation}\label{eq:commut.proj}
  \Pi^0_{\DIV,h}\big(\uCh\uvec{\upsilon}_h\big)
  = \CURLz\big(\Pi^0_{\CURL,h}\uvec{\upsilon}_h\big)
  \qquad\forall\uvec{\upsilon}_h\in\Xcurl[h].
\end{equation}
It is inferred from \cite[Lemma 2.2]{Bonelle.Ern:15} that
\begin{equation}\label{eq:poincare.zero}
\tnorm[\Eh]{\underline{\nu}_h}^2:=
\sum_{E\in\Eh}h_E^2|E|\nu_E^2\lesssim \sum_{F\in\Fh}h_F|F|(\CURLz\underline{\nu}_h)_F^2\qquad\forall\underline{\nu}_h\in(\Ker\CURLz)^{\perp_{\Eh}},
\end{equation}
where the orthogonal complement $(\Ker\CURLz)^{\perp_{\Eh}}$ is taken for a certain inner product $\langle\cdot,\cdot\rangle_{\Eh}$ on $\Xcurlz[h]$ whose norm is equivalent (uniformly in $h$) to $\tnorm[\Eh]{{\cdot}}$ defined above.
Notice that the norms appearing in \eqref{eq:poincare.zero} use a slightly different local length scale with respect to the ones defined in \cite[Eq.\ (2.15)]{Bonelle.Ern:15}. By mesh regularity, these choices are equivalent up to a constant that depends only on the mesh regularity parameter.
Our approach to leverage this lowest-order Poincar\'e inequality in order to prove \eqref{eq:poincare.edges} consists in defining on $\Xcurl[h]$ an inner product, whose norm is equivalent to $\tnorm[\CURL,h]{{\cdot}}$, such that $\Pi^0_{\CURL,h}(\Ker\uCh)^\perp\subset (\Ker\CURLz)^{\perp_{\Eh}}$.
A preliminary step consists in constructing an extension operator from $\Xcurlz$ to $\Xcurl[h]$ that sends $\Ker\CURLz$ into $\Ker\uCh$.
\medskip\\
{\bf 1. Extension operator.}
The extension operator $\ext:\Xcurlz[h]\to\Xcurl[h]$ is constructed following similar ideas as in the proof of Lemma \ref{lem:poincare.with.edges}:
For all $\underline{\nu}_h\in\Xcurlz[h]$, set $\ext\underline{\nu}_h\coloneq\uvec{\zeta}_h=((\bvec{\zeta}_{\cvec{R},T},\bvec{0})_{T\in\Th},(\bvec{\zeta}_{\cvec{R},F},\bvec{0})_{F\in\Fh},(\nu_E)_{E\in\Eh})$ such that
\begin{alignat}{3}
  \label{eq:def.Emu.F}
  \forall F\in\Fh&\qquad&
  \int_F \bvec{\zeta}_{\cvec{R},F}\cdot\VROT_F q - \sum_{E\in\FE}\omega_{FE}\int_E\nu_E q &= 0
  &\qquad&\forall q\in\Poly{0,k}(F),
  \\
  \label{eq:def.Emu.T}
  \forall T\in\Th&\qquad&
  \int_T\bvec{\zeta}_{\cvec{R},T}\cdot\CURL \bvec{\tau}+\sum_{F\in\FT}\omega_{TF}\int_F \trFt\uvec{\zeta}_F\cdot(\bvec{\tau}\times\normal_F) &= 0
  &\qquad&\forall \bvec{\tau}\in\Goly{k}(T)^\perp.
\end{alignat}
Reasoning as in the proof of Lemma \ref{lem:poincare.with.edges}, these equations uniquely define $(\bvec{\zeta}_{\cvec{R},F})_{F\in\Fh}$ and $(\bvec{\zeta}_{\cvec{R},T})_{T\in\Th}$, which satisfy
\begin{equation}\label{eq:E.iso:intermediate}
\sum_{T\in\Th}\norm[T]{\bvec{\zeta}_{\cvec{R},T}}^2+\sum_{F\in\Fh}h_F\norm[F]{\bvec{\zeta}_{\cvec{R},F}}^2\lesssim
\tnorm[\Eh]{\underline{\nu}_h}^2.
\end{equation}
We obviously have 
\begin{equation}\label{eq:E.Pi0}
  \Pi^0_{\CURL,h}\big(\ext\underline{\nu}_h\big)=\underline{\nu}_h
  \qquad\forall \underline{\nu}_h\in\Xcurlz[h]
\end{equation}
since the edge components of $\ext\underline{\nu}_h=\uvec{\zeta}_h$ are $(\nu_E)_{E\in\Eh}$.
Combining this remark with \eqref{eq:E.iso:intermediate}, we infer 
\begin{equation}\label{eq:E.iso}
\tnorm[\CURL,h]{\ext\underline{\nu}_h}\approx \tnorm[\Eh]{\underline{\nu}_h}\qquad\forall \underline{\nu}_h\in \Xcurlz[h].
\end{equation}

Let us prove that
\begin{equation}\label{eq:E.ker}
\ext\underline{\nu}_h\in \Ker\uCh\qquad\forall \underline{\nu}_h\in\Ker\CURLz.
\end{equation}
Let $\underline{\nu}_h\in\Ker\CURLz$.
By definition \eqref{eq:CURLz} of $\CURLz$, it holds $\sum_{E\in\EF}\omega_{FE}|E|\nu_E=0$ for all $F\in \Fh$, and thus \eqref{eq:def.Emu.F} is actually satisfied for all $q\in\Poly{k}(F)$. This shows that $\CF\big(\ext\underline{\nu}_h\big)=0$ for all $F\in\Fh$. Using again the argument, based on \cite[Eq. (5.22)]{Di-Pietro.Droniou.ea:20}, in the proof of Lemma \ref{lem:poincare.with.edges}, we infer from \eqref{eq:def.Emu.T} that $\uCT\big(\ext\underline{\nu}_h\big)=\uvec{0}$ for all $T\in\Th$.
This proves \eqref{eq:E.ker}.
\medskip\\
{\bf 2. Construction of the inner product on  $\Xcurl[h]$.}
Let us now define the inner product $[\cdot,\cdot]$ on $\Xcurl[h]$ such that, for all $\forall (\uvec{\upsilon}_h,\uvec{\sigma}_h)\in\Xcurl[h]\times\Xcurl[h]$,
\[
[\uvec{\upsilon}_h,\uvec{\sigma}_h]=\langle \Pi^0_{\CURL,h}\uvec{\upsilon}_h,\Pi^0_{\CURL,h}\uvec{\sigma}_h\rangle_{\Eh}
+((\Id-\ext\Pi^0_{\CURL,h})\uvec{\upsilon}_h,(\Id-\ext\Pi^0_{\CURL,h})\uvec{\sigma}_h)_{\CURL,h}.
\]
Letting $N(\cdot)$ be the norm associated with $[\cdot,\cdot]$ and recalling that the norm associated with $\langle\cdot,\cdot\rangle_{\Eh}$ is equivalent to $\tnorm[\Eh]{{\cdot}}$, we have, for all $\uvec{\upsilon}_h\in\Xcurl[h]$,
\[
\begin{aligned}
N(\uvec{\upsilon}_h)^2={}&\tnorm[\Eh]{\Pi^0_{\CURL,h}\uvec{\upsilon}_h}^2+\norm[\mu,\CURL,h]{(\Id-\ext\Pi^0_{\CURL,h})\uvec{\upsilon}_h}^2\\
\approx{}& \norm[\mu,\CURL,h]{\ext\Pi^0_{\CURL,h}\uvec{\upsilon}_h}^2+\norm[\mu,\CURL,h]{(\Id-\ext\Pi^0_{\CURL,h})\uvec{\upsilon}_h}^2,
\end{aligned}
\]
where the equivalence follows from \eqref{eq:E.iso} and \eqref{eq:equiv.Xcurl}.
Triangle inequalities combined with \eqref{eq:E.iso} and the straightforward estimate $\tnorm[\Eh]{\Pi^0_{\CURL}\uvec{\upsilon}_h}\lesssim \tnorm[\CURL,h]{\uvec{\upsilon}_h}\approx \norm[\mu,\CURL,h]{\uvec{\upsilon}_h}$ then show that $N(\cdot)$ is equivalent to $\norm[\mu,\CURL,h]{{\cdot}}$, as required.
\medskip\\
{\bf 3. Conclusion.}
Take $\uvec{\upsilon}_h\in(\Ker\uCh)^\perp$, where the orthogonal is taken for the inner product $[\cdot,\cdot]$. Let $\underline{\nu}_h\in\Ker\CURLz$.
Then, $\ext\underline{\nu}_h\in\Ker\uCh$ (see \eqref{eq:E.ker}) and $(\Id-\ext\Pi^0_{\CURL,h})\ext\underline{\nu}_h=\ext\underline{\nu}_h-\ext\Pi^0_{\CURL,h}(\ext\underline{\nu}_h)=0$ (see \eqref{eq:E.Pi0}).
Hence,
\[
0=[\uvec{\upsilon}_h,\ext\underline{\nu}_h]=\langle \Pi^0_{\CURL,h}\uvec{\upsilon}_h,\Pi^0_{\CURL,h}\ext\underline{\nu}_h\rangle_{\Eh}=\langle \Pi^0_{\CURL,h}\uvec{\upsilon}_h,\underline{\nu}_h\rangle_{\Eh}.
\]
In other words, $\Pi^0_{\CURL,h}\uvec{\upsilon}_h\in(\Ker\CURLz)^{\perp_{\Eh}}$. Invoking then \eqref{eq:poincare.zero} on $\underline{\nu}_h=\Pi^0_{\CURL,h}\uvec{\upsilon}_h$, recalling the definition of $\Pi^0_{\CURL,h}$, and using the commutation property \eqref{eq:commut.proj}, we infer
\[
\sum_{E\in\Eh}h_E^2|E|(\overline{\upsilon}_E)^2\lesssim \sum_{F\in\Fh}h_F|F|\left(\overline{\CF\uvec{\upsilon}_F}\right)_F^2
\le\sum_{F\in\Fh}h_F\norm[F]{\CF\uvec{\upsilon}_F}^2,
\]
the second estimate following by Jensen's inequality since $\left(\overline{\CF\uvec{\upsilon}_F}\right)_F$ is the average of $\CF\uvec{\upsilon}_F$ on $F$. The proof of \eqref{eq:poincare.edges} is completed recalling the definition \eqref{eq:def.tnorm.div} of $\tnorm[\DIV,h]{{\cdot}}$ and the norm equivalence \eqref{eq:equiv.Xdiv}. \end{proof}

\subsection{Boundedness of interpolator}

The global interpolator on $\Xdiv[h]$ is given by: For all $\bvec{v}\in \Hil^1(\Omega)^3$,
\[
\uIdiv[h]\bvec{v}\coloneq\big(
(\Gproj{k-1}{T}\bvec{v},\GOproj{k}{T}\bvec{v})_{T\in\Th},(\lproj{k}{F}(\bvec{v}\cdot\normal_F)
\big)_{F\in\Fh})\in\Xdiv[h].
\]

\begin{lemma}[Boundedness of {$\uIdiv[h]$}]\label{lem:bd.interp.div}
It holds
\begin{equation}\label{eq:bd.uIdiv}
\tnorm[\DIV,h]{\uIdiv[h]\bvec{v}}\lesssim \norm[\Hil^1(\Omega)^3]{\bvec{v}}\qquad\forall\bvec{v}\in \Hil^1(\Omega)^3.
\end{equation}
\end{lemma}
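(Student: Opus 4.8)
The plan is to estimate separately, element by element, the two kinds of contributions to $\tnorm[\DIV,h]{\uIdiv[h]\bvec{v}}^2=\sum_{T\in\Th}\tnorm[\DIV,T]{(\uIdiv[h]\bvec{v})_{|T}}^2$. Recall that, for the interpolator, the element components in $T\in\Th$ are $\bvec{v}_{\cvec{G},T}=\Gproj{k-1}{T}\bvec{v}$ and $\bvec{v}_{\cvec{G},T}^\perp=\GOproj{k}{T}\bvec{v}$, while the component on a face $F\in\Fh$ is $v_F=\lproj{k}{F}(\bvec{v}\cdot\normal_F)$. It therefore suffices to bound each of the two terms in the definition \eqref{eq:def.tnorm.div} of $\tnorm[\DIV,T]{{\cdot}}$ by local pieces of the $\Hil^1(\Omega)^3$-norm of $\bvec{v}$, and to sum.

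For the element term, I would use the orthogonality recalled just after \eqref{eq:def.tnorm.div} together with the fact that $\Gproj{k-1}{T}$ and $\GOproj{k}{T}$ are $\Leb^2(T)^3$-orthogonal projectors (hence of operator norm $1$), to get
\[
\norm[T]{\bvec{v}_{\cvec{G},T}+\bvec{v}_{\cvec{G},T}^\perp}^2
=\norm[T]{\Gproj{k-1}{T}\bvec{v}}^2+\norm[T]{\GOproj{k}{T}\bvec{v}}^2
\le 2\,\norm[T]{\bvec{v}}^2.
\]
For the face terms, fix $F\in\FT$. The $\Leb^2(F)$-boundedness of $\lproj{k}{F}$ and $|\normal_F|=1$ give $\norm[F]{v_F}\le\norm[F]{\bvec{v}\cdot\normal_F}\le\norm[F]{\bvec{v}_{|F}}$, and the continuous trace inequality on the regular element $T$ (see, e.g., \cite[Lemma 1.31]{Di-Pietro.Droniou:20}) yields $\norm[F]{\bvec{v}_{|F}}^2\lesssim h_T^{-1}\norm[T]{\bvec{v}}^2+h_T\norm[T]{\GRAD\bvec{v}}^2$. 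Multiplying by $h_F$ and using $h_F\approx h_T$ by mesh regularity, I obtain $h_F\norm[F]{v_F}^2\lesssim \norm[T]{\bvec{v}}^2+h_T^2\norm[T]{\GRAD\bvec{v}}^2$.

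Summing this face bound over the $\card(\FT)\lesssim 1$ faces of $T$, adding the element term, and then summing over $T\in\Th$ gives
\[
\tnorm[\DIV,h]{\uIdiv[h]\bvec{v}}^2
\lesssim \sum_{T\in\Th}\big(\norm[T]{\bvec{v}}^2+h_T^2\norm[T]{\GRAD\bvec{v}}^2\big)
\lesssim \norm[\Omega]{\bvec{v}}^2+h^2\norm[\Omega]{\GRAD\bvec{v}}^2
\lesssim \norm[\Hil^1(\Omega)^3]{\bvec{v}}^2,
\]
the last step using that $h$ is controlled by the diameter of $\Omega$. This is essentially a matter of $\Leb^2$-stability of orthogonal projectors and mesh-regularity bookkeeping, so I do not expect a genuine obstacle; the one ingredient that is not purely algebraic is the continuous trace inequality with explicit $h$-scaling, which is standard on regular polyhedral meshes and follows from a scaling argument.
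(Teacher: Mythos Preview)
Your proof is correct and follows essentially the same approach as the paper: bound the element contribution by $\Leb^2$-stability of the orthogonal projectors, bound the face contribution via the scaled continuous trace inequality, and sum. The only cosmetic difference is that the paper treats $\Gproj{k-1}{T}+\GOproj{k}{T}$ as a single orthogonal projector (yielding $\norm[T]{\bvec{v}}^2$ rather than your $2\norm[T]{\bvec{v}}^2$), which is immaterial for the $\lesssim$ estimate.
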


\begin{proof}
By $\Leb^2(T)^3$-boundedness of the orthogonal projector $\Gproj{k-1}{T}+\GOproj{k}{T}$, we have 
\begin{equation}\label{eq:bd.uIdiv.1}
\norm[T]{\Gproj{k-1}{T}\bvec{v}+\GOproj{k}{T}\bvec{v}}\le \norm[T]{\bvec{v}}.
\end{equation}
The $\Leb^2(F)$-boundedness of $\lproj{k}{F}$ and the local continuous trace inequality \cite[Lemma 1.31]{Di-Pietro.Droniou:20} yield
\begin{equation}\label{eq:bd.uIdiv.2}
 h_F^{\frac12}\norm[F]{\lproj{k}{F}(\bvec{v}\cdot\normal_F)}\le h_F^{\frac12}\norm[F]{\bvec{v}}\le \norm[T]{\bvec{v}}+h_T\norm[T]{\GRAD\bvec{v}}\lesssim \norm[\Hil^1(T)^3]{\bvec{v}}.
\end{equation}
Estimate \eqref{eq:bd.uIdiv} follows raising \eqref{eq:bd.uIdiv.1} and \eqref{eq:bd.uIdiv.2} to the square and summing, respectively, over $T\in\Th$ and $F\in \Fh$.
\end{proof}

\subsection{Poincar\'e inequalities}

\begin{theorem}[Isomorphism property and Poincar\'e inequality for {$\Dh$}]\label{thm:poincare.Dh}
Let $(\Ker\Dh)^\perp$ be the orthogonal of $\Ker\Dh$ in $\Xdiv[h]$ for an inner product whose norm is (uniformly in $h$) equivalent to $\norm[\DIV,h]{{\cdot}}$. Then, $\Dh:(\Ker\Dh)^\perp\to\Poly{k}(\Th)$ is an isomorphism and
\begin{equation}\label{eq:poincare.Dh}
\norm[\DIV,h]{\uvec{w}_h}\lesssim \norm[\Omega]{\Dh\uvec{w}_h}\qquad\forall \uvec{w}_h\in(\Ker\Dh)^\perp.
\end{equation}
\end{theorem}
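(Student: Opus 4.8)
The plan is to establish the Poincaré inequality \eqref{eq:poincare.Dh} directly; the isomorphism property then follows since \eqref{eq:poincare.Dh} shows $\Dh$ is injective on $(\Ker\Dh)^\perp$, while surjectivity onto $\Poly{k}(\Th)$ is already granted by \eqref{eq:Im.div=Poly.Th} in Theorem \ref{thm:exactness}. So the whole content is the bound \eqref{eq:poincare.Dh}. I would argue by a continuous lifting: given $\uvec{w}_h\in(\Ker\Dh)^\perp$, set $q_h\coloneq\Dh\uvec{w}_h\in\Poly{k}(\Th)\subset\Leb^2(\Omega)$, and invoke the classical result used in the proof of \eqref{eq:Im.div=Poly.Th} (namely \cite[Lemma 8.3]{Di-Pietro.Droniou:20}, plus the trivial treatment of the constant part) to obtain $\bvec{v}\in\Hil^1(\Omega)^3$ with $\DIV\bvec{v}=q_h$ and, crucially, $\norm[\Hil^1(\Omega)^3]{\bvec{v}}\lesssim\norm[\Omega]{q_h}$. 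The constant in this estimate depends only on $\Omega$, which is admissible.

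**Key steps.** First, let $\uvec{v}_h\coloneq\uIdiv[h]\bvec{v}\in\Xdiv[h]$ be the global interpolate of this $\bvec{v}$. By the commutation property of the local divergence with the interpolator (\cite[Lemma 25]{Di-Pietro.Droniou:20}, exactly as used in Step 1 of the proof of Theorem \ref{thm:exactness}), we get $\DT\uvec{v}_T=\lproj{k}{T}(\DIV\bvec{v}_{|T})=\lproj{k}{T}q_{h|T}=q_{h|T}$ for all $T\in\Th$, hence $\Dh\uvec{v}_h=q_h=\Dh\uvec{w}_h$. Therefore $\uvec{w}_h-\uvec{v}_h\in\Ker\Dh$. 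Second, since $\uvec{w}_h\in(\Ker\Dh)^\perp$ for an inner product whose norm $N(\cdot)$ is uniformly equivalent to $\norm[\DIV,h]{{\cdot}}$, writing $\uvec{w}_h=P^\perp\uvec{v}_h$ (with $P^\perp$ the orthogonal projector onto $(\Ker\Dh)^\perp$ for that inner product) gives $N(\uvec{w}_h)\le N(\uvec{v}_h)$, so $\norm[\DIV,h]{\uvec{w}_h}\lesssim\norm[\DIV,h]{\uvec{v}_h}$. Third, bound $\norm[\DIV,h]{\uvec{v}_h}$: by the norm equivalence \eqref{eq:equiv.Xdiv} we have $\norm[\DIV,h]{\uvec{v}_h}\approx\tnorm[\DIV,h]{\uvec{v}_h}=\tnorm[\DIV,h]{\uIdiv[h]\bvec{v}}$, and then Lemma \ref{lem:bd.interp.div} gives $\tnorm[\DIV,h]{\uIdiv[h]\bvec{v}}\lesssim\norm[\Hil^1(\Omega)^3]{\bvec{v}}\lesssim\norm[\Omega]{q_h}=\norm[\Omega]{\Dh\uvec{w}_h}$. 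Chaining the three estimates yields \eqref{eq:poincare.Dh}.

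**Main obstacle.** The analytically delicate point is the quantitative version of the classical lifting: one needs not merely the existence of $\bvec{v}\in\Hil^1(\Omega)^3$ with $\DIV\bvec{v}=q_h$ (as invoked in Theorem \ref{thm:exactness}) but the accompanying $\Hil^1$-stability estimate $\norm[\Hil^1(\Omega)^3]{\bvec{v}}\lesssim\norm[\Omega]{q_h}$, with a constant depending only on $\Omega$. This is standard (it is the $\Leb^2$-surjectivity of the divergence with bounded right inverse, equivalently a consequence of the Bogovskii operator or of Nečas's inequality on the connected Lipschitz domain $\Omega$), and \cite[Lemma 8.3]{Di-Pietro.Droniou:20} provides precisely such a bounded lifting for zero-average data; the constant part $\frac{1}{|\Omega|}\int_\Omega q_h$ of $q_h$ is lifted by an explicit affine field whose $\Hil^1$-norm is controlled by $\norm[\Omega]{q_h}$ up to a factor depending only on $\Omega$. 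Everything else is a routine assembly of results already proved in the excerpt (the interpolator commutation, Lemma \ref{lem:bd.interp.div}, and the norm equivalence \eqref{eq:equiv.Xdiv}), so I do not anticipate further difficulty.
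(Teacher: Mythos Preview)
Your proposal is correct and follows essentially the same approach as the paper's own proof: construct a bounded $\Hil^1$-lifting $\bvec{v}$ of $q_h=\Dh\uvec{w}_h$, interpolate it to get $\uvec{v}_h=\uIdiv[h]\bvec{v}$ with $\Dh\uvec{v}_h=q_h$, observe that $\uvec{w}_h$ is the orthogonal projection of $\uvec{v}_h$ onto $(\Ker\Dh)^\perp$, and conclude via Lemma~\ref{lem:bd.interp.div} and the norm equivalence~\eqref{eq:equiv.Xdiv}. The only cosmetic difference is that the paper deduces the isomorphism property directly from~\eqref{eq:Im.div=Poly.Th} and the orthogonal decomposition rather than from the Poincar\'e inequality, but this is immaterial.
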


\begin{remark}[Topology of {$\Omega$}]
As for the exactness of $\Dh$ stated in Theorem \ref{thm:exactness}, Theorem \ref{thm:poincare.Dh} is valid for any polyhedral domain (even if its second Betti number is not zero).
\end{remark}

\begin{proof}
The fact that $\Dh:(\Ker\Dh)^\perp\to\Poly{k}(\Th)$ is an isomorphism follows directly from the exactness relation \eqref{eq:Im.div=Poly.Th} and the decomposition $\Xdiv[h]=\Ker\Dh\oplus (\Ker\Dh)^\perp$. 

Let $\uvec{w}_h\in(\Ker\Dh)^\perp$ and set $q_h\coloneq\Dh\uvec{w}_h$. As seen in the proof of \eqref{eq:Im.div=Poly.Th}, if $\bvec{v}\in \Hil^1(\Omega)^3$ is such that $\DIV\bvec{v}=q_h$, then the global interpolate $\uvec{v}_h\coloneq\uIdiv[h]\bvec{v}\in \Xdiv[h]$ of $\bvec{v}$ satisfies $\Dh\uvec{v}_h=q_h$. We can take $\bvec{v}$ such that $\norm[\Hil^1(\Omega)^3]{\bvec{v}}\lesssim\norm[\Omega]{q_h}$ (see \cite[Lemma 8.3]{Di-Pietro.Droniou:20}), and Lemma \ref{lem:bd.interp.div} then shows that 
\begin{equation}\label{eq:est.wh}
\tnorm[\DIV,h]{\uvec{v}_h}\lesssim \norm[\Hil^1(\Omega)^3]{\bvec{v}}\lesssim\norm[\Omega]{q_h}.
\end{equation}
We have $\Dh(\uvec{v}_h-\uvec{w}_h)=q_h-q_h=0$ so $\uvec{v}_h-\uvec{w}_h\in\Ker\Dh$ and $\uvec{w}_h$ is the orthogonal projection, for the inner product in the theorem, of $\uvec{v}_h$ on $(\Ker\Dh)^\perp$. The norm, for this inner product, of $\uvec{w}_h$ is therefore less than the norm of $\uvec{v}_h$. The norm equivalence stated in the theorem, \eqref{eq:equiv.Xdiv} (with $\clubsuit=h$) 
and \eqref{eq:est.wh} conclude the proof of \eqref{eq:poincare.Dh}. \end{proof}

\begin{theorem}[Isomorphism property and Poincar\'e inequality for {$\uCh$}]\label{thm:poincare.Ch}
Let $(\Ker\uCh)^\perp$ be the orthogonal of $\Ker\uCh$ in $\Xcurl[h]$ for an inner product whose norm is (uniformly in $h$) equivalent to $\norm[\mu,\CURL,h]{{\cdot}}$. Then, $\uCh:(\Ker\uCh)^\perp\to\Ker\Dh$ is an isomorphism. Moreover, under Assumption \ref{assum:poincare.edges} and choosing the inner product above as the one provided by this assumption, it holds
\begin{equation}\label{eq:poincare.Ch}
\norm[\mu,\CURL,h]{\uvec{\upsilon}_h}\lesssim \norm[\DIV,h]{\uCh\uvec{\upsilon}_h}\qquad\forall \uvec{\upsilon}_h\in(\Ker\uCh)^\perp.
\end{equation}
\end{theorem}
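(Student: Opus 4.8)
The plan is to mirror the structure of Theorem \ref{thm:poincare.Dh}, using the exactness relation \eqref{eq:Im.curl=Ker.div} in place of \eqref{eq:Im.div=Poly.Th} and Lemma \ref{lem:poincare.with.edges} (together with Assumption \ref{assum:poincare.edges}) to control the edge-average contributions that are absent from the pure divergence case. First I would establish the isomorphism property: by \eqref{eq:Im.curl=Ker.div} we have $\Image\uCh=\Ker\Dh$, and the orthogonal decomposition $\Xcurl[h]=\Ker\uCh\oplus(\Ker\uCh)^\perp$ (valid for any inner product equivalent to $\norm[\mu,\CURL,h]{{\cdot}}$) immediately shows that the restriction $\uCh:(\Ker\uCh)^\perp\to\Ker\Dh$ is a bijection. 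Note that this part does not require Assumption \ref{assum:poincare.edges}.

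For the Poincar\'e inequality \eqref{eq:poincare.Ch}, I would fix the inner product on $\Xcurl[h]$ to be the one furnished by Assumption \ref{assum:poincare.edges}, so that \eqref{eq:poincare.edges} is available on $(\Ker\uCh)^\perp$, and then combine it with \eqref{eq:est.face.el.Xcurl} from Lemma \ref{lem:poincare.with.edges}. Concretely, for $\uvec{\upsilon}_h\in(\Ker\uCh)^\perp$, inequality \eqref{eq:est.face.el.Xcurl} gives
\[
\norm[\mu,\CURL,h]{\uvec{\upsilon}_h}\lesssim \tnorm[\DIV,h]{\uCh\uvec{\upsilon}_h} + \left(\sum_{E\in\Eh}h_E^2|E|(\overline{\upsilon}_E)^2\right)^{\frac12},
\]
and the second term on the right is bounded by $\alpha\norm[\DIV,h]{\uCh\uvec{\upsilon}_h}$ thanks to \eqref{eq:poincare.edges}. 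Finally, the norm equivalence \eqref{eq:equiv.Xdiv} (with $\clubsuit=h$) lets me replace $\tnorm[\DIV,h]{\uCh\uvec{\upsilon}_h}$ by $\norm[\DIV,h]{\uCh\uvec{\upsilon}_h}$ up to a multiplicative constant, yielding \eqref{eq:poincare.Ch}.

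The main subtlety is a bookkeeping point rather than a deep obstacle: Lemma \ref{lem:poincare.with.edges} and Assumption \ref{assum:poincare.edges} both speak of ``an inner product whose norm is equivalent to $\norm[\mu,\CURL,h]{{\cdot}}$'' and of the corresponding orthogonal complement, but these are a priori different inner products, so one must be careful that the orthogonal complement in \eqref{eq:est.face.el.Xcurl} is taken for the same inner product as in \eqref{eq:poincare.edges}. This is precisely why the theorem statement instructs us to choose the inner product in both places to be the one provided by Assumption \ref{assum:poincare.edges}; with that choice the two estimates apply to the same space $(\Ker\uCh)^\perp$ and can be chained. I would spell this out explicitly. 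Everything else — the isomorphism, the triangle-inequality-free combination of the two bounds, and the final norm equivalence — is routine.
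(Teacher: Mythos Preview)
Your proposal is correct and follows essentially the same approach as the paper: the isomorphism property is obtained from the exactness relation \eqref{eq:Im.curl=Ker.div} together with the orthogonal decomposition $\Xcurl[h]=\Ker\uCh\oplus(\Ker\uCh)^\perp$, and the Poincar\'e inequality is deduced by combining \eqref{eq:est.face.el.Xcurl} from Lemma \ref{lem:poincare.with.edges} with Assumption \ref{assum:poincare.edges} and the norm equivalence \eqref{eq:equiv.Xdiv}. Your explicit discussion of the inner-product bookkeeping (ensuring that the orthogonal complements in \eqref{eq:est.face.el.Xcurl} and \eqref{eq:poincare.edges} coincide) is a helpful clarification that the paper leaves implicit.
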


\begin{proof}
The isomorphism property follows from the exactness property \eqref{eq:Im.curl=Ker.div} and the orthogonal decomposition $\Xcurl[h]=\Ker\uCh \oplus (\Ker\uCh)^\perp$. The Poincar\'e inequality \eqref{eq:poincare.Ch} is a consequence of Lemma \ref{lem:poincare.with.edges}, Assumption \ref{assum:poincare.edges}, and the equivalence of norms \eqref{eq:equiv.Xdiv}.
\end{proof}

\subsection{Proof of Theorem \ref{thm:stability}}\label{sec:proof.stability}

We follow the arguments in the proof of \cite[Lemma 1]{Di-Pietro.Droniou.ea:20}. Let $\mathcal S$ denote the left-hand side of \eqref{eq:inf.sup.Ah} and let us start by choosing $(\uvec{\zeta}_h,\uvec{v}_h)=(\uvec{\upsilon}_h,\uvec{w}_h)$ in \eqref{eq:def.Ah}.
This readily gives
\begin{equation}\label{eq:stab.cont.1}
\mathcal S \ge \frac{\norm[\mu,\CURL,h]{\uvec{\upsilon}_h}^2+\norm[\Omega]{\Dh \uvec{w}_h}^2}{\norm[\mu,\CURL,1,h]{\uvec{\upsilon}_h}+\norm[\DIV,1,h]{\uvec{w}_h}}.
\end{equation}
We then select $(\uvec{\zeta}_h,\uvec{v}_h)=(\uvec{0},\uCh\uvec{\upsilon}_h)$ in \eqref{eq:def.Ah} and use $\Dh\uvec{v}_h=0$ (by the exactness \eqref{eq:Im.curl=Ker.div}) to obtain
$\mathcal S \ge \norm[\DIV,h]{\uCh\uvec{\upsilon}_h}$ which, combined with \eqref{eq:stab.cont.1}, yields
\begin{equation}\label{eq:stab.cont.2}
\mathcal S\gtrsim \frac{\norm[\mu,\CURL,1,h]{\uvec{\upsilon}_h}^2+\norm[\Omega]{\Dh\uvec{w}_h}^2}{\norm[\mu,\CURL,1,h]{\uvec{\upsilon}_h}+\norm[\DIV,1,h]{\uvec{w}_h}}.
\end{equation}
To conclude the proof of \eqref{eq:inf.sup.Ah}, it remains to estimate $\norm[\DIV,h]{\uvec{w}_h}$.
We split $\uvec{w}_h$ into
\[
\uvec{w}_h=\uvec{w}_h^\star+\uvec{w}_h^\perp\in \Ker\Dh \oplus (\Ker\Dh)^\perp=\Xdiv[h],
\]
where the orthogonal is taken with respect to the $(\cdot,\cdot)_{\DIV,h}$-inner product. Invoking Theorem \ref{thm:poincare.Dh}, we have
\begin{equation}\label{eq:stab.cont.3}
\norm[\DIV,h]{\uvec{w}_h^\perp}^2\lesssim \norm[\Omega]{\Dh\uvec{w}_h^\perp}^2=\norm[\Omega]{\Dh\uvec{w}_h}^2\lesssim
\mathcal S \left(\norm[\mu,\CURL,1,h]{\uvec{\upsilon}_h}+\norm[\DIV,1,h]{\uvec{w}_h}\right),
\end{equation}
where the equality follows from $\Dh\uvec{w}_h=\Dh(\uvec{w}_h^\star+\uvec{w}_h^\perp)=\Dh\uvec{w}_h^\perp$ and \eqref{eq:stab.cont.2} was used in the last inequality. To estimate $\norm[\DIV,h]{\uvec{w}_h^\star}$, we use Theorem \ref{thm:poincare.Ch} to find $\uvec{\zeta}_h\in(\Ker\uCh)^\perp$ (the orthogonal being taken for the inner product in Assumption \ref{assum:poincare.edges}) such that $\uCh\uvec{\zeta}_h=-\uvec{w}_h^\star$ and $\norm[\mu,\CURL,h]{\uvec{\zeta}_h}\lesssim \norm[\DIV,h]{\uvec{w}_h^\star}$. This immediately yields $\norm[\mu,\CURL,1,h]{\uvec{\zeta}_h}\lesssim\norm[\DIV,h]{\uvec{w}_h^\star}$ and, using this $\uvec{\zeta}_h$ together with $\uvec{v}_h=\uvec{0}$ in the definition \eqref{eq:def.Ah} of $\mathcal A_h$, we obtain
\[
\norm[\DIV,h]{\uvec{w}_h^\star}\mathcal S \gtrsim \norm[\mu,\CURL,1,h]{\uvec{\zeta}_h} \mathcal S
\ge (\uvec{\upsilon}_h,\uvec{\zeta}_h)_{\mu,\CURL,h}+(\uvec{w}_h^\star,\uvec{w}_h^\star+\uvec{w}_h^\perp)_{\DIV,h}.
\]
Cauchy--Schwarz inequalities and $\norm[\mu,\CURL,h]{\uvec{\zeta}_h}\lesssim \norm[\DIV,h]{\uvec{w}_h^\star}$, \eqref{eq:stab.cont.2}, and \eqref{eq:stab.cont.3} lead to
\begin{align*}
\norm[\DIV,h]{\uvec{w}_h^\star}^2\lesssim{}& \norm[\DIV,h]{\uvec{w}_h^\star}\norm[\DIV,h]{\uvec{w}_h^\perp}
+\norm[\mu,\CURL,h]{\uvec{\upsilon}_h}\norm[\mu,\CURL,h]{\uvec{\zeta}_h}+\norm[\DIV,h]{\uvec{w}_h^\star}\mathcal S\\
\lesssim{}& \norm[\DIV,h]{\uvec{w}_h^\star}\left[
  \left[
  \mathcal S (\norm[\mu,\CURL,1,h]{\uvec{\upsilon}_h}+\norm[\DIV,1,h]{\uvec{w}_h})
  \right]^{\frac12} + \mathcal S
  \right].
\end{align*}
Simplifying and using again \eqref{eq:stab.cont.3} we infer $\norm[\DIV,h]{\uvec{w}_h}^2\lesssim \mathcal S \left(\norm[\mu,\CURL,1,h]{\uvec{\upsilon}_h}+\norm[\DIV,1,h]{\uvec{w}_h}\right)+\mathcal S^2$ which, combined with \eqref{eq:stab.cont.2}, leads to
$\norm[\mu,\CURL,1,h]{\uvec{\upsilon}_h}^2+\norm[\DIV,1,h]{\uvec{w}_h}^2\lesssim \mathcal S \left(\norm[\mu,\CURL,1,h]{\uvec{\upsilon}_h}+\norm[\DIV,1,h]{\uvec{w}_h}\right)+\mathcal S^2$.
A Young inequality then concludes the proof of the inf--sup estimate \eqref{eq:inf.sup.Ah}.


\section*{Acknowledgements}

The authors thank Ian Wanless for fruitful discussions around the proof of Theorem \ref{thm:exactness}.
D. A. Di Pietro acknowledges the partial support of \emph{Agence Nationale de la Recherche} (grant number ANR-17-CE23-0019). J. Droniou was partially supported by the Australian Government through the \emph{Australian Research Council}'s Discovery Projects funding scheme (grant number DP170100605).


\bibliographystyle{plain}
\bibliography{maddr}
\end{document}